\numberwithin{equation}{section}
\newtheorem{theorem}{Theorem}[section]
\newtheorem{proposition}[theorem]{Proposition}
\newtheorem{lemma}[theorem]{Lemma}
\theoremstyle{remark}
\newtheorem{remark}[theorem]{Remark}
\theoremstyle{definition}
\newtheorem{definition}[theorem]{Definition}
\newcommand{\norm}[1]{\left\lVert#1\right\rVert}
\newcommand\restr[2]{{
  \left.
  #1
  \right|_{#2} 
  }}
\newcommand{\divergence}{\operatorname{div}}
\newcommand{\grad}{\operatorname{grad}}
\newcommand{\extd}[0]{\mathrm{d}}
\newcommand{\id}[0]{\mathrm{id}}
\newcommand{\idon}[1]{\id_{#1}}
\newcommand{\ric}{\mathrm{Rc}}
\newcommand{\richat}{\mathrm{\hat{R}c}}
\newcommand{\rictilde}{\mathrm{\Tilde{R}c}}
\newcommand{\rscal}{\mathrm{Sc}}
\newcommand{\genmet}{\mathcal{G}}
\newcommand{\genscal}{\mathcal{S}}
\newcommand{\genric}{\mathcal{R}c}
\newcommand{\scalbrack}[1]{\left\langle #1 \right\rangle}
\newcommand{\End}[0]{\mathrm{End}}
\newcommand{\Hom}[0]{\mathrm{Hom}}
\newcommand{\Sym}{\mathrm{Sym}}
\newcommand{\reals}{\mathbb{R}}
\newcommand{\absolute}[1]{\left\lvert #1 \right\rvert}
\newcommand{\tr}{\mathrm{tr}\:}
\newcommand{\trwith}[1]{\mathrm{tr}_{#1}\:}
\newcommand{\sym}{\mathrm{sym}}
\newcommand{\antisym}{\mathrm{antisym}}
\newcommand{\closure}[2][3]{%
{}\mkern#1mu\overline{\mkern-#1mu#2}}
\begin{document}
\pagenumbering{Roman}
\thispagestyle{empty}
\cleardoublepage
\setcounter{page}{1}
\thispagestyle{empty}

\cleardoublepage
\pagenumbering{arabic}

\title{The Initial Value Problem for \\ the Generalised Einstein Equations}

\date{\today}

\author{Oskar Schiller}

\maketitle

\tableofcontents
\pagenumbering{arabic}
\section{Abstract}

We discuss the initial value problem for the Einstein equations in Hitchin's generalised geometry for the case of closed divergence (which correspond to the equations of motion in the bosonic part of the NS-NS sector in type II ten-dimensional supergravity) and establish the existence of a maximal globally hyperbolic development (MGHD). The dynamical fields, defined on a manifold of dimension $n+1$, are the space-time metric, a scalar field known as the dilaton function, and a two-form known as the $B$-field. We develop a generalisation of the Lorenz gauge which, applied to the $B$-field (and combined with a suitable gauge condition breaking diffeomorphism invariance), renders the system a wave equation with principal symbol given by the (dynamical) metric. Given initial data, we construct a development satisfying the gauge conditions. We show that all other developments are (in the appropriate sense) related to this development by a diffeomorphism, establishing geometric uniqueness. The existence of the MGHD follows then by a famous result by Choquet-Bruhat and Geroch. 

In showing existence and geometric uniqueness of developments, we follow an approach developed in detail by Ringström for the Einstein equations coupled to a scalar field. In a preliminary section, we present a formulation which is disentangled from the specific assumptions made on the matter, so that adaptation to other systems is straightforward. 

\medskip
\textit{MSc classification}: 35Q76 (Einstein's equations); 53D18 (Generalized geometry a la Hitchin).\

\textit{Key words:} Einstein equations, generalized geometry, initial value problem, supergravity

\section{Introduction}

In generalised geometry, the generalised Einstein equations (GEE) are the natural analogue of the Einstein equations as they are known in conventional semi-Riemannian geometry. It is therefore natural to investigate if they yield a well-defined initial value problem (IVP). Moreover, it was established in \cite{waldramsupergravity} that the GEE (in the special case of \textit{exact} divergence) are equivalent to the bosonic part of the NS-NS sector in type II ten-dimensional supergravity, providing additional motivation to the formulation as an IVP.

The paper is structured as follows. In Section \ref{preliminaries_section}, we give preliminary results generally useful for establishing well-posedness of IVPs for Einstein-matter systems. These results were (in some cases implicitly) presented by Ringström in \cite{cauchy}, a work that (among other things) is concerned with well-posedness of the IVP for the Einstein equations coupled to a scalar field. We adopt its approach in our study of the IVP for the GEE.

In Section \ref{geneinsteineqs_section}, we present the GEE as one obtains them from generalised geometry. In the case of closed divergence, they are the following system of equations for a Lorentzian metric $g$, a closed three-form $H$, and a closed one-form $\xi$ called dilaton one-form (or simply dilaton), cf.\ (\ref{combinedgeneinsteineqs}):
\begin{equation*}
    \extd^*H = - i_{\xi} H, \qquad\quad \mathrm{Rc} = \frac{H^2}{4} - \nabla \xi, \qquad\quad \frac{\absolute{H}^2}{6} = \extd^*\xi + \absolute{\xi}^2.
\end{equation*}
Note that we refer to this system also as the \textit{string frame} GEE (as opposed to the Einstein frame GEE). We often work with a two-form potential $B$ for $H$, and a scalar potential $\phi$ for $\xi$. Closed divergence is also referred to as closed dilaton, as it implies $\extd \xi = 0$. 

In Section \ref{ivp_formulation_section}, we explain how to formulate the GEE as an IVP, following \cite{cauchy} in spirit. We introduce the following notion of initial data (cf.\ Definition \ref{initialdata_stringframe_def}).
\begin{definition}\label{initialdata_stringframe_def_intro}
    \textit{Initial data (in terms of fields)} for the (string frame) GEE in $n+1$ dimensions is a tuple $(\Sigma,g_0,k,H^\parallel_0, h_0, \xi_0^\parallel, x_0)$ consisting of an $n$-dimensional Riemannian manifold $(\Sigma,g_\Sigma)$, and the following objects on $\Sigma$: 
    \begin{enumerate}[label = (\arabic*)]
        \item a symmetric $(0,2)$-tensor $k$,
        \item a closed three-form $H^\parallel_0$ and a two-form $h_0$,
        \item a closed one-form $\xi^\parallel_0$ and a real-valued function $x_0$.
    \end{enumerate} 
    all such that the \textit{constraint equations} (\ref{genconstrainteqs}) are satisfied.
\end{definition}
The constraint equations get their name from their constraining the space of initial data; they are necessarily fulfilled by initial data coming from a development under the GEE. A posteriori, we see that they are in fact sufficient for the initial data to admit a development (cf.\ Theorem \ref{genEinsteinlocalexistencethm_intro}).

Ringström's approach is the following. First, embed the initial hypersurface in a suitable ambient space. Next, show that for every point on the hypersurface and in a gauge providing suitable uniqueness properties, one can construct on an open coordinate neighbourhood of the ambient space a globally hyperbolic development of the initial data. Finally, patch the local developments together to a development of the entire initial hypersurface. Geometric uniqueness can be established by local comparison to this development, as it was constructed in a gauge that endows it with a certain uniqueness. 

To obtain the local developments on coordinate patches, one requires a result such as \cite[Corollary 9.16]{cauchy} (which we include for completeness in this work as Theorem \ref{uniqueexistencehyperbolicpdethm}). It guarantees the existence of a unique maximal solution to quasi-linear hyperbolic PDE-systems for vector-valued functions $u\colon \reals^{n+1} \to \reals^{n+1}$ of the form
\begin{equation}\label{hyperbolicpdeeq_intro}
     g[u]^{\mu\nu} \partial_\mu \partial_\nu u = f[u]
\end{equation}
given compactly supported initial data. Note that $f$ and $g$ are members of specific classes of functions with arguments consisting of the point $(t,x) \in \reals^{n+1}$, and the solution $u$ and its first derivatives at that point; one employs the notation $g[u](t,x) = g(t,x,u(t,x),\partial_0u(t,x),...,\partial_nu(t,x))$ and similarly for $f$.

\begin{remark}
    As far as the author of this text is aware, \cite{cauchy} is the first work to provide a comprehensive and rigorous formulation of this approach to establish well-posedness.\footnote{It seems to the author of this work that it was well-known that such a strategy could be made to work. A remark to that extent is made in \cite[section 5.4.]{friedrichrendall2000}, and Ringström himself presents his foundational work as expository in the introduction to \cite{cauchy}.} For this reason, and because it is precisely the goal to establish basic well-posedness results for the system studied, we view it as desirable to follow Ringström's work. Its adaptation to other systems of matter is, at least in principle, straightforward. If one wants to discuss any given matter coupled to the Einstein equations, it should suffice to check that, in a local coordinate neighbourhood, the system is of a form such that a local existence and uniqueness result (such as Theorem \ref{uniqueexistencehyperbolicpdethm}) applies. The entire rest of the proof is then expected to hold still.\footnote{This adaptability should also be a feature of other approaches. For example, a claim to that extent is made in \cite[section 5.1.]{friedrichrendall2000}, and DeTurck seemingly refers to this being the case in \cite[remarks below Theorem 5.5]{deturck1983cauchy}.} However, this is not obvious, as \cite{cauchy} focuses on the particular case of the Einstein equations coupled to a scalar field and we consider a different matter model. It is furthermore complicated by the GEE only being of a good form after a locally defined conformal transformation (to the Einstein frame), which means that the background metric which was defined globally in \cite{cauchy} is in our setting only defined locally. 
\end{remark}

In Sections \ref{einsteinframe_section} and \ref{gaugeconditions_section}, we show how to locally relate the GEE to a system that is in coordinates of the form (\ref{hyperbolicpdeeq_intro}). This involves a conformal transformation of the metric, referred to as \textit{adopting the Einstein frame}, and then the construction of a system which is in coordinates of the form (\ref{hyperbolicpdeeq_intro}).

The Einstein frame is explained in Section \ref{einsteinframe_section}. The conformal transformation of the metric is given by  $\tilde{g} \coloneqq e^{-2 \kappa \phi} g$, where $\kappa = \frac{1}{d-2}$ and $d = n+1$ is the spacetime dimension. Note that the conformal factor is defined with respect to a choice of potential $\phi$ for the dilaton $\xi$. Thus, in general, there is no global choice of Einstein frame, and any two choices of Einstein frame are (over a simply connected set) related by a conformal transformation with constant conformal factor. The conformally transformed GEE are called the \textit{Einstein frame GEE}. They read
\begin{equation}\label{einsteinframecombinedeinsteineqs_intro}
    \begin{split}
        \Tilde{\extd}^*H &= -\frac{4}{d-2} \tilde{\iota}_\xi H \\
        \rictilde &= \frac{1}{d-2} \left[\xi \otimes \xi - \frac{e^{-4\kappa \phi}}{6}\absolute{H}_{\tilde{g}}^2\Tilde{g}\right]+ e^{-4\kappa\phi}\frac{H^{2,\tilde{g}}}{4}  \\ 
        \tilde{\Box}\phi &= -\frac{e^{-4\kappa\phi}}{6} \absolute{H}^2_{\tilde{g}}  
    \end{split} 
\end{equation}
Objects with a tilde employ in their definition the Einstein frame metric $\tilde{g}$. (For a precise explanation of the notation, cf.\ Proposition \ref{einsteinframeprop}.)

In Section \ref{gaugeconditions_section}, we perform a hyperbolic reduction of the Einstein frame GEE and construct a system which is in coordinates of the form (\ref{hyperbolicpdeeq_intro}). Crucially, the constructed system is equivalent to the Einstein frame GEE provided that DeTurck's gauge condition and the generalised Lorenz gauge condition - a new gauge condition which we develop - are satisfied. Note that there and in the following, we drop the tilde on the Einstein frame variables to avoid an overly cumbersome notation.

\textit{DeTurck's gauge condition} breaks diffeomorphism invariance by comparing the metric to a background metric $\Bar{g}$ \cite{deturck1983cauchy}. In contrast to the wave gauge, it is a geometric condition that does not depend on a special choice of coordinates. It can be stated as 
\[ 0 = \mathcal{D}_\mu = g_{\mu\nu}g^{\alpha\beta}(\Gamma_{\alpha\beta}^\nu - \Bar{\Gamma}_{\alpha\beta}^\nu),\]
where $\Gamma^\nu_{\alpha\beta}$ and $\Bar{\Gamma}^\nu_{\alpha\beta}$ respectively denote the Christoffel symbols of $g$ and $\Bar{g}$ in arbitrary coordinates (note that the expression yields a well-defined covector). With DeTurck's gauge implemented, one can replace in local expressions of the Ricci tensor particular appearances of the metric by corresponding appearances of the background metric, turning the Ricci tensor into the hyperbolic operator $\richat$. More precisely, $\richat = \ric + \nabla_{(\mu} \mathcal{D}_{\nu)}$. We remark that DeTurck's gauge condition and the wave gauge can both be understood as a special case of the \enquote{wave gauge with source functions}, as developed by Friedrich in \cite{friedrich1985hyperbolicity} and presented in a more modern context in \cite{friedrichrendall2000}. 

The \textit{generalised Lorenz gauge} is a geometric generalisation of the Lorenz gauge to higher forms, drawing on ideas from DeTurck's gauge condition. Remember that the (usual) Lorenz gauge is a condition imposed on the Maxwell-field $A$ (the one-form potential for the field strength tensor in Einstein-Maxwell theory). It can be stated as $\nabla^\mu A_\mu = 0$, and it can be employed to turn the Einstein-Maxwell system into a wave equation with principal symbol given by the metric \cite[§10.1.]{bruhatGRBook}. The generalised Lorenz gauge is a condition imposed on the $B$-field (the two-form potential for the dynamical part of $H$ with respect to a given background field $\Bar{H} \in \Omega_{\mathrm{cl}}^3(M)$, i.e.\ $H = \Bar{H} + \extd B$). It can be stated as 
\[ 0 = \big[\extd\extd^*_{g,\Bar{g}}B\big]_{\mu\nu} = - g^{\lambda\kappa}\nabla_{[\mu}\Bar{\nabla}_{|\lambda} B_{\kappa|\nu]} = 0 \]
with $\Bar{\nabla}$ the Levi-Civita connection of the background metric $\Bar{g}$, and the index notation $[\mu|\lambda\kappa|\nu]$ means antisymmetrisation in $\mu$ and $\nu$. It forces the action on $B$ of the modified Hodge wave operator $\hat{\Box}_{\mathrm{Hd}} = -\extd^*\extd - \extd \extd^*_{g,\Bar{g}}$ to agree with the co-exact wave operator $-\extd^*\extd$. We compare the usual and the generalised Lorenz gauge in Remark \ref{lorenzgauge_comparison_rem} and argue that the generalised Lorenz gauge is a natural generalisation of the Lorenz gauge to higher forms.
\begin{remark}
    We remark on the work \cite{choquetbruhatsugra} of Choquet-Bruhat establishing well-posedness of the Cauchy problem for 11-dimensional $N=1$ supergravity, which is a system related but also inequivalent to the one we study. Notably, with the so-called \enquote{three-index photon} $A$,  it contains an analogue of the $B$-field (which could accordingly be referred to as the two-index photon). In \cite{choquetbruhatsugra}, the coordinate-dependent generalisation $\partial^\lambda A_{\lambda\mu\nu} = 0$ of the Lorenz gauge is imposed on $A$ in local harmonic coordinates. 
\end{remark}

With the modified operators $\richat$ and $\hat{\Box}_{\mathrm{Hd}}$ described above, we obtain the modified system (cf.\ (\ref{modifiedcombinedeinsteineqs}))
\begin{equation}\label{modifiedcombinedeinsteineqs_intro}
    \begin{split}
        \hat{\Box}_{\mathrm{Hd}}B &= \extd^*\Bar{H} +\frac{4}{d-2} i_\xi H, \\
        \richat &= \frac{1}{d-2} \left[\xi \otimes \xi - \frac{e^{-4\kappa \phi}}{6}\absolute{H}_{g}^2\: g \right]+ e^{-4\kappa \phi}\frac{H^{2,g}}{4},  \\ 
        \Box\phi &= -\frac{e^{-4\kappa \phi}}{6} \absolute{H}^2_{g}.  
    \end{split}
\end{equation}
Herein, $H = \Bar{H} + \extd B$ with closed three-forms $H$ and $\Bar{H}$ and a two-form $B$. In Proposition \ref{localformprop}, we show that the principal symbol of this system is given by the (inverse) metric.
\begin{table}[H]
    \centering
    \begin{tabular}{|c|c|c|c|}
         \hline
         Variant of the GEE & String Frame & Einstein Frame & Modified Einstein Frame  \\
         \hline
         Relevance & study subject & auxiliary & metric principal symbol\\
         \hline
         Dynamical Objects & $g,H,\xi$ & $\tilde{g},H,\phi$ & $\tilde{g}, B, \phi$ \\
         \hline
         \multirow{2}{*}{Defined} & \multirow{2}{*}{globally} & locally w.r.t. & locally w.r.t.  \\
         & & a choice of $\phi$ &  a choice of  $\phi$, $\Bar{g}$, and $\Bar{H}$ \\
         \hline
         Relation to  & \multirow{2}{*}{identity  } & \multirow{2}{*}{$\tilde{g} = \exp(-2\kappa\phi) g$ } & equivalent to Einstein frame  \\
         String Frame & & & if gauges are implemented \\
         \hline
    \end{tabular}
    \caption{This table showcases the three versions of the GEE relevant to this work and their main properties. By convention, $g$ denotes the string frame metric, $\tilde{g}$ the Einstein frame metric, $H$ a closed three-form, $\xi$ a closed one-form (the dilaton), $\phi$ a real-valued function (a dilaton potential), $\Bar{g}$ a \enquote{background} metric, and $\Bar{H}$ a closed \enquote{background} three-form.}
    \label{tab:placeholder}
\end{table}
In Section \ref{intialdatasection}, we set up initial values for the dynamical fields $g$, $B$, and $\phi$ in the modified system (\ref{modifiedcombinedeinsteineqs_intro}). Note that the formal notion of initial data from Definition \ref{initialdata_stringframe_def_intro} does not determine the restrictions $\restr{g}{\Sigma}$ and $\restr{B}{\Sigma}$ to the initial hypersurface completely - excluded from the formal notion are components of $\restr{g}{\Sigma}$ and $\restr{B}{\Sigma}$ which are not invariant under diffeomorphisms or $B$-field transformations on $M$ which act as the identity on $T\Sigma \oplus T^*\Sigma$, i.e. gauge dependent components. Taking inspiration from \cite{cauchy}, we find in Lemmas \ref{initialdata_setup_metric_phi_lemma} and \ref{initialdata_setup_bfield_lemma} geometric conditions which characterise the initial values of $g$ and $B$ uniquely. Moreover, we prove in Lemma \ref{dilaton_gaugetrafo_lemma} that the conditions from Lemmas \ref{initialdata_setup_metric_phi_lemma} and \ref{initialdata_setup_bfield_lemma} as well as DeTurck's gauge condition and the generalised Lorenz gauge condition are invariant under a change of Einstein frame. This is crucial, because for a non-exact dilaton, there is no global choice of Einstein frame and thus developments defined in different Einstein frames have to be compatible.

In Section \ref{existence_ghd_section}, we prove existence of a string frame development for arbitrary string frame initial data. We prove with Lemmas \ref{localexistencelemma} and \ref{uniquesoldomainlemma} local (in space) existence and uniqueness results for the string frame GEE. The uniqueness result utilises that the local string frame developments come from gauge-fixed Einstein frame developments. Following the patching procedure of \cite{cauchy}, we obtain one of the main results of this work (cf.\ Theorem \ref{genEinsteinlocalexistencethm}):
\begin{theorem}\label{genEinsteinlocalexistencethm_intro}
    Let $(\Sigma, g_0, k, H_0, h_0, \xi_0, x_0)$ be initial data for the string frame GEE (\ref{combinedgeneinsteineqs}). Then there exists a globally hyperbolic string frame development of the data.
\end{theorem}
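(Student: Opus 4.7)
The plan is to follow the three-step strategy outlined in the introduction: first, produce a globally hyperbolic development of the initial data on a small neighbourhood of each point of $\Sigma$; second, establish a local geometric uniqueness statement for such developments; third, patch the local developments into a single globally hyperbolic development of all of $\Sigma$, invoking the uniqueness to control the overlaps. The heavy lifting is done by the quasi-linear hyperbolic PDE theorem (Theorem \ref{uniqueexistencehyperbolicpdethm}), which will be applied to the modified Einstein frame system \eqref{modifiedcombinedeinsteineqs_intro}, whose principal symbol is the inverse dynamical metric by Proposition \ref{localformprop}.

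For the local existence step around a point $p\in\Sigma$, I would first embed a neighbourhood of $p$ as $\{0\}\times U \subset \reals\times U \subset \reals^{n+1}$ with coordinates in which $\Sigma$ is the zero level set of the time function. Next I would choose a local potential $\phi_0$ for $\xi^\parallel_0$ and a local two-form potential $B_0$ for $H^\parallel_0$, and use these together with a locally chosen background metric $\bar g$ and background three-form $\bar H$ to set up initial values for the Einstein frame dynamical variables $(\tilde g, B, \phi)$. Concretely, the initial values for $\tilde g$ and its time derivative are fixed by Lemma \ref{initialdata_setup_metric_phi_lemma}, those for $B$ by Lemma \ref{initialdata_setup_bfield_lemma}, and $\phi$ by integration of $\xi_0^\parallel$ together with the second piece $x_0$. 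These lemmas guarantee, by construction, that DeTurck's gauge $\mathcal D_\mu=0$ and the generalised Lorenz gauge $\extd\extd^*_{g,\bar g}B=0$ hold on $\Sigma$. I would then cut off the data to compactly supported data on $\reals^n$, apply Theorem \ref{uniqueexistencehyperbolicpdethm} to \eqref{modifiedcombinedeinsteineqs_intro} to produce a local Einstein frame solution, verify that the gauges propagate by the constraint equations (using that the constraints are preserved by the flow of the reduced system, which is the standard Bianchi/gauge propagation argument), and finally conformally transform back via $g=e^{2\kappa\phi}\tilde g$ and $H=\bar H+\extd B$ to obtain a local string frame development.

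For the patching step I would cover $\Sigma$ by such coordinate neighbourhoods $\{U_\alpha\}$ and write $M_\alpha$ for the corresponding local string frame developments. On the overlap $U_\alpha\cap U_\beta$, the two developments are built from (possibly) different potentials $\phi_\alpha$, $\phi_\beta$ and different background data, so the Einstein frame metrics differ even though the string frame data on $\Sigma$ coincide. The compatibility follows from two inputs: Lemma \ref{dilaton_gaugetrafo_lemma}, which shows that both gauge conditions and the conditions characterising the induced data from Lemmas \ref{initialdata_setup_metric_phi_lemma} and \ref{initialdata_setup_bfield_lemma} are invariant under a change of Einstein frame (so the data and gauges are geometric), and the local uniqueness statement Lemma \ref{uniquesoldomainlemma}, which then forces $M_\alpha$ and $M_\beta$ to agree (as string frame objects) on a neighbourhood of $\Sigma\cap U_\alpha\cap U_\beta$. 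This lets me define the patched manifold $M=\bigcup_\alpha M_\alpha$ unambiguously, with globally defined string frame fields $(g,H,\xi)$; global hyperbolicity of a neighbourhood of $\Sigma$ in $M$ follows by shrinking each $M_\alpha$ to a product-type Cauchy development, exactly as in \cite{cauchy}.

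I expect the main obstacle to be the compatibility of Einstein frame developments across different potential choices during patching. Once the gauge-invariance Lemma \ref{dilaton_gaugetrafo_lemma} is in place, the reasoning is essentially that of Ringström, but carefully tracking how the conformal factor $e^{-2\kappa\phi}$ twists the identifications is delicate: one must verify that the local geometric uniqueness (which is formulated at the level of the string frame objects $(g,H,\xi)$) is genuinely sufficient to glue the developments, despite the Einstein frame solutions not agreeing. A secondary technical point is the propagation of the gauges: the standard Bianchi argument for $\mathcal D_\mu$ must be extended to include propagation of $\extd^*_{g,\bar g}B$, which follows from the structure of the modified $B$-equation together with $\extd H=0$, but requires a careful derivation of a homogeneous wave equation for the gauge defect.
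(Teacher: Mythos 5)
Your proposal is correct and follows essentially the same route as the paper: local existence via Proposition \ref{localexistenceprop_preliminaries} applied to the gauge-fixed modified Einstein frame system, propagation of the DeTurck and generalised Lorenz gauges, local uniqueness on overlaps using the Einstein-frame-change invariance of Lemma \ref{dilaton_gaugetrafo_lemma} together with Lemma \ref{uniquesoldomainlemma}, and patching via Lemma \ref{gh_union_lemma_preliminaries}. The only detail worth making explicit is that the cover of $\Sigma$ must be chosen so that pairwise intersections of the $\Sigma_p$ are simply connected, since this is what lets two local dilaton potentials be reconciled by adding a constant before the local uniqueness result can be applied.
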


In Section \ref{uniqueness_ghd_section}, we prove the geometric uniqueness of globally hyperbolic developments. That is, we prove that any two developments are extensions of a common development. We achieve this by showing in Proposition \ref{combinedgauge_prop} that, given any Einstein frame development $(M,\tilde{g},H,\phi)$ of initial data, one can locally implement via a gauge transformation DeTurck's gauge, the generalised Lorenz gauge, and the geometric initial conditions from Lemmas \ref{initialdata_setup_metric_phi_lemma} and \ref{initialdata_setup_bfield_lemma}. This allows for a comparison of $(M,\tilde{g},H,\phi)$ with the local Einstein frame developments that enter the construction of the string frame development in Theorem \ref{genEinsteinlocalexistencethm_intro}. Following again \cite{cauchy}, we obtain another main result of this work (cf.\ Theorem \ref{genEinsteinlocaluniquenessthm}):
\begin{theorem}\label{genEinsteinlocaluniquenessthm_intro}
    Let $(\Sigma, g_0, k, H_0, h_0, \xi_0, x_0)$ be initial data for the string frame equations (\ref{combinedgeneinsteineqs}). Denote by $(D, g, H, \xi)$ a globally hyperbolic development constructed as in Theorem \ref{genEinsteinlocalexistencethm}, $D\subset M = \reals \times \Sigma$. Assume that we have another development $(M', g', H', \xi')$ with embedding $i \colon \Sigma \hookrightarrow M'$. 
    
    Then, there is a tubular neighbourhood $\tilde{D} \subset D$ of $\Sigma$ and a smooth time orientation preserving diffeomorphism $\psi \colon \tilde{D} \to \psi(\tilde{D}) \subset M'$ with $ \restr{\psi}{\Sigma} = i$ relating the two solutions, i.e.\ $\psi^*g' = g$, $\psi^* H' = H$, and $\psi^* \xi' = \xi$.
\end{theorem}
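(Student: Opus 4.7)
The plan is to show that, after a local choice of Einstein frame and a gauge-fixing by diffeomorphism and $B$-field transformation, the other development $(M', g', H', \xi')$ satisfies in a neighbourhood of $\Sigma$ the same modified system (\ref{modifiedcombinedeinsteineqs_intro}) with the same initial data as the local Einstein frame pieces used to build $(D,g,H,\xi)$. Uniqueness for the resulting quasi-linear hyperbolic problem (Theorem \ref{uniqueexistencehyperbolicpdethm}) then provides a local identification which is globalised to the desired $\psi$ on a tubular neighbourhood $\tilde{D}$ of $\Sigma$.

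Concretely, I would fix $p \in \Sigma$ and choose a simply connected open neighbourhood $U' \subset M'$ of $i(p)$ on which the closed one-form $\xi'$ admits a potential $\phi'$. Passing to the Einstein frame $\tilde{g}' = e^{-2\kappa \phi'} g'$ and a two-form potential $B'$ for the dynamical part of $H'$, Proposition \ref{combinedgauge_prop} produces a diffeomorphism (implementing DeTurck's gauge relative to a suitable background $\bar{g}$) and a $B$-field transformation (implementing the generalised Lorenz gauge relative to $\bar{g}$ and a closed background $\bar{H}$) such that, in the new gauge, $(\tilde{g}', B', \phi')$ solves the modified Einstein frame system (\ref{modifiedcombinedeinsteineqs_intro}) and satisfies the geometric initial conditions of Lemmas \ref{initialdata_setup_metric_phi_lemma} and \ref{initialdata_setup_bfield_lemma}. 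By construction of $(D, g, H, \xi)$ in Theorem \ref{genEinsteinlocalexistencethm}, the corresponding local piece near $p$ arises from a solution of the same problem with the same $\bar{g}$, $\bar{H}$, and initial data, so Theorem \ref{uniqueexistencehyperbolicpdethm} applied to (\ref{hyperbolicpdeeq_intro}) forces the two solutions to coincide on a neighbourhood of $p$ in $\reals \times \Sigma$. Unwinding the gauge transformation and the conformal factor yields a time-orientation preserving local diffeomorphism $\psi_p$ from a neighbourhood of $p$ in $D$ to a neighbourhood in $M'$ with $\restr{\psi_p}{\Sigma} = i$ and $\psi_p^* g' = g$, $\psi_p^* H' = H$, $\psi_p^* \xi' = \xi$.

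To globalise, I cover $\Sigma$ by such local patches and glue the $\psi_p$. On overlaps the two local identifications depend a priori on the choice of dilaton potential, background metric, and background three-form, as well as on the auxiliary gauge transformation. Here Lemma \ref{dilaton_gaugetrafo_lemma} is the crucial input: the gauge conditions and the geometric initial conditions survive intact under a change of Einstein frame, so on overlaps the two gauge-fixed Einstein frame solutions again solve a common quasi-linear hyperbolic problem with coinciding initial data, and Theorem \ref{uniqueexistencehyperbolicpdethm} forces agreement. Any residual diffeomorphism relating two $\psi_p$ must preserve the gauge and restrict to $\id$ on $\Sigma$, hence is trivial by uniqueness. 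This permits one to patch the $\psi_p$ into a single time-orientation preserving $\psi$ defined on a tubular neighbourhood $\tilde{D} \subset D$ of $\Sigma$.

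The main obstacle I expect is precisely the consistency of the local gauge-fixings under variation of the dilaton potential: different Einstein frames are related by a constant conformal rescaling on each simply connected patch, but on topologically non-trivial overlaps the local potentials may differ by non-zero constants and the backgrounds $\bar{g}$, $\bar{H}$ are chosen independently on each patch. Ensuring that the resulting gauge-fixed modified systems still fall under a common PDE uniqueness statement, and that the $\psi_p$ therefore glue smoothly, rests on the frame-invariance established in Lemma \ref{dilaton_gaugetrafo_lemma}. A secondary, more technical point is controlling the time extent of $\tilde{D}$: local PDE uniqueness only yields agreement near $\Sigma$, so the conclusion is naturally phrased on a tubular neighbourhood rather than on all of $D$, which is precisely what the statement asserts.
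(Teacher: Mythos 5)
Your proposal follows essentially the same route as the paper: local gauge-fixing of the arbitrary development via Proposition \ref{combinedgauge_prop}, identification with the gauge-fixed local Einstein frame pieces from the construction of Theorem \ref{genEinsteinlocalexistencethm} by local PDE uniqueness, frame-invariance via Lemma \ref{dilaton_gaugetrafo_lemma} to handle differing dilaton potentials on overlaps, and patching of the local identifications into $\psi$ on a tubular neighbourhood. The only refinements worth noting are that the local comparison should invoke the domain-of-dependence uniqueness statement (Proposition \ref{uniquesoldomainprop_preliminaries}, packaged as Lemma \ref{uniquesoldomainlemma}) rather than Theorem \ref{uniqueexistencehyperbolicpdethm}, which concerns a single Cauchy problem on all of $\reals^{n+1}$ with compactly supported data, and that the gluing of the $\psi_p$ is most cleanly justified by Lemma \ref{intersection_gh_lemma_preliminaries}: an isometry of a globally hyperbolic overlap is determined by the restriction of its differential to the common Cauchy hypersurface, on which all the $\psi_p$ agree with $i_*$.
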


Finally, in Section \ref{mghd_section}, we present the famous result by Choquet-Bruhat and Geroch \cite{choquet1969global} that establishes the existence of an MGHD for arbitrary Einstein-matter systems, assuming they admit local developments of initial data and solutions are geometrically unique. From it and Theorems \ref{genEinsteinlocalexistencethm_intro} and \ref{genEinsteinlocaluniquenessthm_intro} we get the existence of an MGHD (cf.\ Theorem \ref{mghd_thm}).
\begin{theorem}
    Let $\mathcal{I}$ be initial data for the string frame GEE. Then there exists a string frame MGHD of $\mathcal{I}$. It is unique up to diffeomorphism.
\end{theorem}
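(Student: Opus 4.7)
The plan is to reduce the statement to the abstract gluing theorem of Choquet-Bruhat--Geroch \cite{choquet1969global}, whose hypotheses are precisely local existence (Theorem \ref{genEinsteinlocalexistencethm_intro}) and geometric uniqueness (Theorem \ref{genEinsteinlocaluniquenessthm_intro}). Concretely, I would consider the collection $\mathcal{D}$ of isomorphism classes of globally hyperbolic developments of the initial data $\mathcal{I}$, where two developments are identified if they are related by a time-orientation preserving diffeomorphism restricting to the identity on $\Sigma$ and pulling back the dynamical fields to one another. On $\mathcal{D}$ one introduces the partial order $(D,g,H,\xi) \preceq (D',g',H',\xi')$ if and only if there is such a diffeomorphism onto an open subset of $D'$ whose image contains $i(\Sigma)$ and a causally compatible neighbourhood thereof. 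Theorem \ref{genEinsteinlocalexistencethm_intro} ensures $\mathcal{D}$ is non-empty.

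Next I would verify the two hypotheses Choquet-Bruhat--Geroch require of $(\mathcal{D}, \preceq)$. First, chains in $\mathcal{D}$ have upper bounds: given a totally ordered family $\{(D_\alpha, g_\alpha, H_\alpha, \xi_\alpha)\}_\alpha$, one builds a candidate common extension by taking the quotient of the disjoint union under the equivalence relation generated by the comparison diffeomorphisms; the partial order being total guarantees transitivity of this relation, and the smooth, Hausdorff, globally hyperbolic structure descends because overlaps are open inclusions of globally hyperbolic developments. Second, any two developments have a common refinement: this is the content of Theorem \ref{genEinsteinlocaluniquenessthm_intro}, which produces a tubular neighbourhood $\tilde{D}$ of $\Sigma$ and a diffeomorphism identifying the fields there; this common subdevelopment can be extended to a maximal common subdevelopment by considering the set of points where the two developments agree via some diffeomorphism relative to $\Sigma$, and showing this set is open, closed, and non-empty in the appropriate relative topology.

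With chains admitting upper bounds, Zorn's lemma produces a maximal element $(\mathcal{M},g,H,\xi) \in \mathcal{D}$; this is the MGHD. For uniqueness up to diffeomorphism, suppose $(\mathcal{M}', g', H', \xi')$ is another MGHD. Applying the common-refinement argument to $\mathcal{M}$ and $\mathcal{M}'$ yields a development $(N, \hat g, \hat H, \hat\xi)$ together with diffeomorphic embeddings into both $\mathcal{M}$ and $\mathcal{M}'$ relative to $\Sigma$. Maximality of both $\mathcal{M}$ and $\mathcal{M}'$ then forces both embeddings to be onto (otherwise one could glue to obtain a strictly larger development, contradicting maximality), and their composition is the required diffeomorphism.

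The main technical obstacle is the set-theoretic subtlety in the chain argument: the collection $\mathcal{D}$ is a proper class rather than a set, so one must either restrict to developments whose underlying manifold is a subset of some fixed sufficiently large set (e.g.\ some ambient $\reals^{n+1}$-bundle over $\Sigma$) or reformulate the argument via a transfinite recursion on pairs of developments. A second subtle point, already present in \cite{choquet1969global} and identified as requiring care in later literature, is the Hausdorff property of the candidate upper bound on a chain: one must ensure that distinct points in different $D_\alpha$ that are not identified by any chain comparison remain non-separated only if they genuinely correspond to the same spacetime event, which ultimately follows from applying geometric uniqueness locally around any putative pair of non-separated points.
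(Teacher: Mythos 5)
Your proposal is correct and follows essentially the same route as the paper: both reduce the statement to the abstract Choquet-Bruhat--Geroch gluing theorem, whose two hypotheses (existence of a globally hyperbolic development, and any two developments being extensions of a common one) are supplied by Theorems \ref{genEinsteinlocalexistencethm} and \ref{genEinsteinlocaluniquenessthm}. The only difference is that the paper simply cites \cite{choquet1969global} (deferring the Zorn's-lemma, Hausdorff-quotient, and set-theoretic subtleties you sketch to Ringstr\"om's and Sbierski's detailed treatments), whereas you additionally outline the internals of that cited argument.
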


\paragraph{The Cauchy Problem for General Einstein-Matter Systems.}
One can understand the GEE with closed divergence as an Einstein-matter system with matter given by $H$ and $\xi$. Existence and uniqueness of solutions to Einstein-matter equations has been the subject of extensive investigation, and is established under varying assumptions of well-behavedness on the matter. In the following, we give a brief overview of relevant literature, outlining ways of obtaining well-posedness in different approaches, and motivating our own in the Sections \ref{einsteinframe_section}-\ref{uniqueness_ghd_section}.

Generally, the assumption of \enquote{well-behavedness} on the matter is needed to guarantee two properties. First, the energy-momentum tensor has to be divergence-free as a consequence of the matter equations alone (i.e.\ without requiring the energy-momentum tensor to equal the Einstein tensor). This assumption ensures that the gauge condition of choice for turning the Einstein equations into a hyperbolic system (e.g.\ DeTurck's gauge) is preserved under development of initial data by the Einstein equations, cf.\ Lemma \ref{metricgaugefield_sourcefreewaveeq_lemma_preliminaries}. We see in Section \ref{einsteinframe_section} that the GEE only satisfy this requirement after a conformal transformation of the system (referred to in physics as a transformation to the Einstein frame). The Einstein frame requires a choice of potential $\phi$ for the dilaton, $\xi = \extd \phi$, and can therefore only be implemented locally. Second, in a suitable gauge, the combined Einstein-matter system has to form a hyperbolic system with (in some notion) locally well-behaved IVP. Given these two properties, there are well-understood procedures to construct a development to initial data, prove that any two developments are an extension of a common development, and thus conclude the existence of an MGHD. 

One of the most prominent treatments of the Cauchy problem for Einstein-matter systems may be found in Hawking and Ellis \cite[section 7.7]{hawkingellis}. We will not pursue this approach, but would like to explain its main idea and difficulties. Fundamentally, Hawking and Ellis establish the existence and uniqueness of solutions via a fixed-point argument (based on the seminal work \cite{bruhat1952seminalwork} by Choquet-Bruhat). Given initial data on a hypersurface in an ambient spacetime, the argument involves the map $\alpha \colon g_0 \mapsto g_1$ on the space of Lorentzian metrics, where $g_1$ is defined as follows. Solve the matter equations with $g_0$ as a background, then find a solution $h$ to the linearised Einstein equations with given stress-energy tensor, and finally set $g_1 = g_0 + h$. Hawking and Ellis prove $\alpha$ to be a contraction, yielding the unique solution to the Einstein-matter equations. 

The approach by Hawking and Ellis produces as main requirements on the matter equations that they have a well-defined Cauchy problem \textit{given a background metric}, and that they have good stability behaviour under changes of the background metric. Both properties should be straightforward to show in our setting. What complicates things is the additional requirement that the stress-energy tensor is polynomial in the matter fields, their first covariant derivatives, and the metric. From Proposition \ref{emtensorprop}, we know this requirement to be violated by the energy-momentum tensor associated to $H$ and $\phi$ in the Einstein frame. However, this seems to be a purely technical point. A more substantial problem in adopting this approach here is that it works with the equations on the whole ambient spacetime, but we only know the GEE to be well-behaved in the locally defined Einstein frame. Therefore, we abandon the approach presented in \cite{hawkingellis} in favour of Ringström's approach as presented in \cite{cauchy}.

We briefly note that one can also obtain well-posedness results in the ADM formalism. A comprehensive treatment in this formalism  of the Cauchy problem for general Einstein-matter systems  can be found in \cite{fischer1979initial}. 

\paragraph{Acknowledgements.} I am sincerely grateful to Hans Ringström for insightful comments he provided on his work. My gratitude also extends to the mathematics department of UNED Madrid for granting me a doctoral visiting position. Especially during my visit, but also beyond, I have greatly benefited from the support of and discussions with Miguel Pino Carmona and Carlos Shahbazi, who have obtained similar results in the analytic category but for arbitrary signature \cite{bunk2025cauchyproblemgradientgeneralized}. Finally, I am indebted to Vicente Cortés for his many valuable comments, his guidance, and his overall support.

\subsection{Notation and Conventions}

$(M,g)$ denotes a $d = n+1$-dimensional Lorentzian manifold. We set $\kappa = \frac{1}{d-2}$. We call a time-oriented Lorentzian manifold a \textit{spacetime}. We call a tuple $(M,g,H,\phi)$ consisting of a spacetime $(M,g)$, a closed three-form $H \in \Omega^3(M)$ and a function $\phi \in C^\infty(M)$ a \textit{SuGra spacetime}. We denote the Beltrami wave operator as $\Box = \nabla^\mu \nabla_\mu$ and the Hodge wave operator as $\Box_{\mathrm{Hd}} = - \extd^* \extd - \extd \extd^*$. We denote the interior product with a vector field $X$ by $i_X$, so that for a one-form $\xi$ we have $i_X(\xi) = \xi(X)$. For convenience, we also write $i_\xi$ for the interior product with the vector field $\xi^\sharp$ obtained from raising an index on a given one-form $\xi$.

Working in indices, we employ the Einstein summation convention throughout. Greek indices $\alpha,\beta,\mu,\nu,...$ take values $0,...,n$, where $0$ is assumed to correspond to a timelike direction. Latin indices $a,b,i,j,...$ take values $1,...,n$ and are assumed to correspond to spacelike directions.

We make use of round brackets to denote normalised symmetrisation, and square brackets to denote normalised antisymmetrisation. Thus, for example, the Lie derivative of the metric can be written as $L_X g_{\mu\nu} = 2 \nabla_{(\mu} X_{\nu)}$. Working without indices, we denote the same operations by a superscript \enquote{$\sym$} and \enquote{$\antisym$}, respectively. Thus, $L_X g = 2 [\nabla X^\flat]^\sym$ with $\flat\colon T^*M \to TM$ the musical isomorphism coming from $g$.

Given a semi-Riemannian hypersurface $\Sigma$ in $(M,g)$ with unit normal $N$, we decompose the restriction of a $p$-form $A \in \Omega^p(M)$ in terms of its tangent and normal part. The tangent part $A^\parallel \in \Omega^p(\Sigma)$ is the restriction of $A$ to $T\Sigma$, and the normal part $A^\perp \in \Omega^{p-1}(\Sigma)$ is the restriction of $i_N A$ to $T\Sigma$. In particular, $A = A^\parallel + \varepsilon\: N^\flat \wedge A^\perp$, where $N^\flat = g(N,\cdot)$ and $\varepsilon = g(N,N)$.

In this setting, it is sometimes of interest to decompose the restricted exterior derivative $\restr{\extd A}{\Sigma}$ in terms of contributions from the parallel and normal parts of $A$ and $\nabla_N A$. Denoting by $k$ the second fundamental form on $\Sigma$, one finds the following contribution (cf.\ Lemma \ref{extdhypersurflemma}) 
\begin{equation*}
    \left(k \cdot A^\parallel\right)_{i_1...i_p} \coloneqq (-1)^{p} p\: k_{[i_1}^j A^\parallel_{i_2...i_p]j}
\end{equation*}
Herein, $\cdot$ is the natural Lie algebra action of endomorphisms on the space of $p$-forms (for convenience, we use the letter $k$ also for the Weingarten map, i.e.\ the endomorphism obtained from the second fundamental form by raising an index).
\section{Preliminaries: Techniques Applicable to IVPs for Einstein-Matter Systems}\label{preliminaries_section}

We present here those parts of the work \cite{cauchy} by Ringström which are relevant to any Einstein-matter system. While all results presented in this section are heavily based on \cite{cauchy}, most of them are only established implicitly in \cite{cauchy}. For detailed proofs, we refer to the upcoming PhD thesis auf the author of this work \cite{oskar_phdthesis}.

\subsection{Quasi-Linear Hyperbolic PDEs with Metric Principal Symbol}\label{existence_ghd_preliminaries}

In this section, we state a result on the existence and uniqueness of solutions to quasi-linear hyperbolic PDEs with principal symbol given by the inverse of a Lorentzian metric which is part of the dynamics, provided that the non-linearity satisfies a minor well-behavedness assumption. In an appropriate gauge, many important Einstein-matter systems are of this type, cf.\ Remark \ref{einstein_matter_local_form_rem}.

Let $M = \reals \times \Sigma$ for a smooth manifold $\Sigma$. Denote by $L(M) \subset \Sym^2(TM)$ the bundle of non-degenerate symmetric two-tensors with Lorentzian signature, by $E'$ a vector subbundle of the tensor bundle $\mathcal{T}M$, and set $E = L(M) \times E'$. Take an arbitrary point $p \in \Sigma$, and a coordinate neighbourhood $(U_\Sigma,x^m)$ of $p$, and define coordinates $x^\mu = (x^0 = t, x^m)$ on $U = \reals \times U_\Sigma$. Then, we study on $U$ the PDE
\begin{equation}\label{localexistenceproof_hyperbolicpdeeq}
    g^{\mu\nu}\partial_\mu \partial_\nu u - f[u] = 0, \qquad u = (g,u') \in \Gamma(E|_U)
\end{equation}
where $f \in C^\infty (L_n \times \reals^{N' + (n+1) N + n+1}, \reals^N)$ with $N' = \operatorname{rank} E'$, $N = N' + (n+1)^2$, and $L_n$ the space of $n+1$-dimensional Lorentz matrices. We employed the notation
\begin{equation*}
    f[u](t,x) \coloneqq f(t,x,g_{\mu\nu},u_j',\partial_\mu u_j).
\end{equation*}
We finally make the assumption that
\begin{equation*}
    f[u](t,x) = F\big(g_{\mu\nu}, u'_j, \partial_\mu u_j, \partial^\alpha B_l(t,x)\big)
\end{equation*}
for some \enquote{background fields} $B \in C^\infty(M, \reals^K)$, $K \in \mathbb{N}_0$, and a smooth $\reals^N$-valued function $F$ depending on variables as indicated ($\alpha \in \mathbb{N}_0^{n+1}$ represents multi-indices up to degree $k \in \mathbb{N}_0$, $\absolute{\alpha} \leq k$) such that $F(g_{\mu\nu},0,...,0) = 0$.

The following proposition provides a generalisation of a result implicitly established in \cite[proof of Theorem 14.2]{cauchy}. For a proof, cf.\ \cite[Proposition 5.1.]{oskar_phdthesis}.
\begin{proposition}\label{localexistenceprop_preliminaries}
    Let $U_0,U_1 \in C^\infty(U_\Sigma, E)$ be initial data for the equation (\ref{localexistenceproof_hyperbolicpdeeq}) such that, writing $U_0 = (g_0, U'_0)$, $(g_0)_{\mu\nu}$ takes values in the space of canonical Lorentz matrices $\mathcal{C}_n$.  
    
    Then, for every open neighbourhood $O \subset U$ of $p$, there exists an open neighbourhood $W \subset O$ of $p$ on which the equation (\ref{localexistenceproof_hyperbolicpdeeq}) admits a globally hyperbolic development $u = (g,u')$ of the initial data on $W \cap \Sigma$ given by restriction of $U_0$ and $U_1$ (i.e.\ $u = (g,u')$ solves (\ref{localexistenceproof_hyperbolicpdeeq}), $(W,g)$ is globally hyperbolic, and we have $\restr{U_0}{W \cap \Sigma} = \restr{u}{W\cap \Sigma}$ as well as $\restr{U_1}{W\cap \Sigma} = \restr{\partial_\mu u}{W\cap\Sigma}$). $W$ can be chosen such that the component-matrix $(g_{\mu\nu})_{\mu\nu}$ takes values in the space of canonical Lorentz matrices $\mathcal{C}_n$ and, in particular, such that $\grad t$ is timelike on $W$.
\end{proposition}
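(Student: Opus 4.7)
The strategy is a standard localization argument combined with the global-in-space existence result, Theorem~\ref{uniqueexistencehyperbolicpdethm}. First I would shrink the coordinate chart so that $p$ corresponds to the origin in $\reals^n$ and use a smooth cutoff $\chi \in C^\infty_c(U_\Sigma)$, identically one on a neighbourhood of $p$, to replace $(U_0,U_1)$ with modified data $(\tilde{U}_0,\tilde{U}_1)$ that agree with $(U_0,U_1)$ near $p$ and with a fixed reference configuration $U_\infty = (\eta,0) \in \mathcal{C}_n \times \{0\}$ outside the support of $\chi$. Extending by $U_\infty$ trivially beyond $U_\Sigma$, we obtain data on all of $\reals^n$. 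Since $\mathcal{C}_n$ is an open neighbourhood of $\eta$ and $(g_0)_{\mu\nu}(p) \in \mathcal{C}_n$, by shrinking the support of $1 - \chi$ to a sufficiently small neighbourhood of $p$ one ensures that the convex combination stays inside $\mathcal{C}_n$ throughout, so that $(\tilde{U}_0,\tilde{U}_1)$ is a compactly supported perturbation of $U_\infty$ suitable as input for Theorem~\ref{uniqueexistencehyperbolicpdethm}.

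Next, I would apply Theorem~\ref{uniqueexistencehyperbolicpdethm} to the equation (\ref{localexistenceproof_hyperbolicpdeeq}) with the modified initial data, after verifying that the nonlinearity satisfies the hypotheses of the theorem. The structural assumption that $f[u](t,x) = F(g_{\mu\nu}, u'_j, \partial_\mu u_j, \partial^\alpha B_l(t,x))$ with $F$ smooth and $F(g_{\mu\nu},0,\dots,0) = 0$, together with smoothness of the background fields $B$, realises the source as a $(t,x)$-dependent smooth nonlinearity of the dynamical variable $u$ and its first derivatives, of the shape required in \cite{cauchy}. The theorem then produces a globally hyperbolic development $\tilde{u} = (\tilde{g},\tilde{u}')$ on some open set $\tilde{M} \subset \reals \times \reals^n$ containing $\{t = 0\} \times \reals^n$.

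Finally, I would choose $W \subset O \cap \tilde{M}$ to be a small open neighbourhood of $p$ contained in the $\tilde{g}$-domain of dependence of the set $\{t = 0\} \cap \{\chi \equiv 1\}$. The uniqueness part of Theorem~\ref{uniqueexistencehyperbolicpdethm} (equivalently, finite propagation speed) then ensures that on $W$ the modified development $\tilde{u}$ depends only on the original data $(U_0,U_1)$ restricted to $W \cap \Sigma$, so that $u \coloneqq \tilde{u}|_W$ is a development of the prescribed initial data. By continuity of $g_{\mu\nu}$ and openness of $\mathcal{C}_n$, after one further shrinking of $W$ the component matrix $(g_{\mu\nu})$ stays in $\mathcal{C}_n$ throughout $W$, which in particular forces $\grad t$ to be timelike and makes $(W,g)$ globally hyperbolic with Cauchy hypersurface $W \cap \Sigma$.

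The main technical obstacle, in my view, is bookkeeping rather than analysis: one must check carefully that the precise class of nonlinearities handled by Theorem~\ref{uniqueexistencehyperbolicpdethm} as proven in \cite{cauchy} (stated there for a slightly different setup, with background fields and with the compact-support condition phrased for a perturbation from a reference solution) accommodates both the metric component $g_{\mu\nu}$, whose values are constrained to the open set $\mathcal{C}_n$, and the composite dependence on the background fields $B$ through $\partial^\alpha B_l(t,x)$. Once this translation is verified, the cutoff-extend-restrict strategy goes through, and the concluding domain-of-dependence argument giving a local-in-space development from a global-in-space one is routine.
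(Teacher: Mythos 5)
Your proposal is essentially the argument the paper intends: it defers the proof to \cite{oskar_phdthesis} as a generalisation of what is implicit in \cite[proof of Theorem 14.2]{cauchy}, namely exactly your cutoff--extend--apply-Theorem~\ref{uniqueexistencehyperbolicpdethm}--restrict-by-domain-of-dependence scheme, including the shift by a reference configuration to obtain compactly supported data (needed since $\mathcal{C}_n$ does not contain $0$) and the choice of a lens-shaped $W$ on which $(g_{\mu\nu})$ stays in $\mathcal{C}_n$. The bookkeeping points you flag (cutting off the background fields so that $f[0]$ has locally $x$-compact support, and using a genuine domain-of-dependence uniqueness statement rather than the bare uniqueness clause of Theorem~\ref{uniqueexistencehyperbolicpdethm} to see that the cutoff does not affect the solution on $W$) are indeed the only details requiring care, and they go through as you expect.
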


\paragraph{Uniqueness of Local Solutions.} We want to discuss the uniqueness of local solutions to equations of the type (\ref{hyperbolicpdeeq}), as guaranteed to exist by Proposition \ref{localexistenceprop_preliminaries}.

We focus on the following setting. Let 
\[f \in C^\infty (L_n \times \reals^{N' + (n+1)N + n+1},\reals^N),\]
where $N' \in \mathbb{N}_0$ and $N = N' + (n+1)^2$. Denote $\Sigma = \{0\}\times \reals^n \subset \mathbb{R}^{n+1}$. For $i=1,2$, let $W_i \subset O_i \subset \reals^{n+1}$ be open subsets such that $W_1 \cap W_2 \cap \Sigma \neq \emptyset$, and denote $\Sigma_i \coloneqq W_i \cap \Sigma$. Let $g_i \colon O_i \to \mathcal{C}_n$ be a Lorentzian metric and $u'_i \colon O_i \to \reals^{N'}$, $i=1,2$, be such that over $W_i$, $u_i= (g_i,u'_i)$ solves
\begin{equation*}
    g_i^{\mu\nu}\partial_\mu \partial_\nu u_i = f[u_i]
\end{equation*}
Assume furthermore that $O_i$ is convex with respect to $g_i$, and that $\Sigma_i$ is spacelike Cauchy in $(W_i,g_i)$. 

The following Proposition establishes local uniqueness in the desired sense. It is based on an argument found in \cite[proof of Theorem 14.2]{cauchy}. For a proof, cf.\ \cite[Proposition 5.5.]{oskar_phdthesis}.
\begin{proposition}
    \label{uniquesoldomainprop_preliminaries}
    Assume that $\closure{W}_i$ is compact and contained in $O_i$, $i=1,2$. Assume that $u_1 = u_2$ and $\grad u_1 = \grad u_2$ on $\Sigma_1 \cap \Sigma_2$. Then $u_1 = u_2$ on the whole intersection $W_1 \cap W_2$.
\end{proposition}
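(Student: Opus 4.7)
The plan is to form the difference $v := u_1 - u_2$ on the intersection $W_1 \cap W_2$ and reduce the statement to a uniqueness-of-zero result for a linear hyperbolic equation with vanishing Cauchy data. Subtracting the two PDEs and rearranging yields
\[ g_1^{\mu\nu}\partial_\mu\partial_\nu v = (g_2^{\mu\nu}-g_1^{\mu\nu})\partial_\mu\partial_\nu u_2 + f[u_1]-f[u_2]. \]
Since the metric components are components of $u$, the piece $g_1-g_2$ is itself a component of $v$; and $\partial\partial u_2$ is uniformly bounded on the compact closure $\closure{W}_2 \subset O_2$. Applying Hadamard's lemma to both terms on the right-hand side writes them as a linear combination of $v$ and $\partial v$ with smooth, bounded coefficients. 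Thus $v$ solves a linear hyperbolic equation
\[ g_1^{\mu\nu}\partial_\mu\partial_\nu v = A^\mu\partial_\mu v + Bv, \]
with Lorentzian principal symbol $g_1$ and smooth bounded coefficients on $\closure{W}_1 \cap \closure{W}_2$, with vanishing Cauchy data on $\Sigma_1 \cap \Sigma_2$.

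For a point $p$ in the $g_1$-causal future of $\Sigma_1 \cap \Sigma_2$ sufficiently close to that hypersurface, I would apply the standard energy estimate for this linear equation on the $g_1$-causal past of $p$ in $W_1$. The convexity of $O_1$ with respect to $g_1$ together with the Cauchy property of $\Sigma_1$ in $(W_1,g_1)$ imply that this past is a compact region with spacelike lower boundary in $\Sigma_1$, contained in $O_1$. For $p$ close enough to $\Sigma_1 \cap \Sigma_2$ this boundary sits inside $\Sigma_1 \cap \Sigma_2$, so the vanishing Cauchy data propagate through the estimate and yield $v(p)=0$. A symmetric argument in the $g_1$-past of $\Sigma_1 \cap \Sigma_2$ provides a full neighbourhood of $\Sigma_1 \cap \Sigma_2$ in $W_1 \cap W_2$ on which $v \equiv 0$ together with $\partial v$.

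The main obstacle is extending this conclusion to arbitrary $p \in W_1 \cap W_2$, since the $g_1$-causal past of such a point may reach into $\Sigma_1 \setminus \Sigma_2$, where no hypothesis is in place. I would handle this by a standard continuation argument: let $K \subset W_1 \cap W_2$ be the largest open set on which $v \equiv 0$ and $\partial v \equiv 0$; by the previous step $K$ contains a neighbourhood of $\Sigma_1 \cap \Sigma_2$, and it is closed in $W_1 \cap W_2$ by continuity. At any boundary point $p \in \closure{K} \cap (W_1 \cap W_2)$, the coefficients $A^\mu,B$ remain smooth and bounded nearby, the characteristic cones of $g_1$ and $g_2$ vary continuously and both declare $\grad t$ timelike, so one can choose a small $g_1$-causal diamond around $p$ contained in $\closure{W}_1 \cap \closure{W}_2$ whose lower boundary sits inside $K$. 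The local linear energy estimate then propagates $v\equiv 0$ across $p$, contradicting maximality unless $K$ exhausts the connected component of $W_1 \cap W_2$ containing $\Sigma_1 \cap \Sigma_2$ — which by the geometric setup (both $W_i$ arise as tubular neighbourhoods of the common piece of $\Sigma$) is all of $W_1 \cap W_2$.
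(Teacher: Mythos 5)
The paper does not actually prove this proposition in the text --- it defers to \cite[Proposition 5.5]{oskar_phdthesis}, itself based on the argument in the proof of \cite[Theorem 14.2]{cauchy} --- so I am comparing your proposal against that standard argument. Your opening move is exactly right and matches it: form $v=u_1-u_2$, absorb $(g_2^{\mu\nu}-g_1^{\mu\nu})\partial_\mu\partial_\nu u_2$ and $f[u_1]-f[u_2]$ via Hadamard's lemma (using smoothness of $u_2$ on $O_2\supset\closure{W}_2$ for boundedness of $\partial\partial u_2$) into a linear homogeneous hyperbolic system for $v$ with principal part $g_1$ and vanishing Cauchy data on $\Sigma_1\cap\Sigma_2$, then run energy estimates on lens-shaped regions. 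Up to and including the conclusion that $v$ vanishes on a neighbourhood of $\Sigma_1\cap\Sigma_2$, this is sound.

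The gap is in the continuation argument, which is where the entire difficulty of the statement lives. First, a small point: the \emph{largest open set} $K$ on which $v\equiv 0$ is not ``closed by continuity'' --- the zero set of $(v,\partial v)$ is closed, but its interior need not be. More seriously, at a boundary point $p$ of $K$ you assert one can choose a small $g_1$-causal diamond around $p$ ``whose lower boundary sits inside $K$.'' There is no reason for this: $K$ is not a past set, its boundary need not be achronal or spacelike, and the truncated past cone of $p$ can perfectly well leave $K$ sideways. This is precisely the obstruction you yourself identified one sentence earlier (the causal past of a general point of $W_1\cap W_2$ may reach $\Sigma_1\setminus\Sigma_2$, or exit $W_2$ through its timelike boundary before reaching $\Sigma$), and the continuation step as written does not resolve it --- it just relocates it from $\Sigma$ to $\partial K$. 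Symptomatically, your argument never actually uses the hypotheses that are there to control this geometry: the convexity of $O_i$ with respect to $g_i$, the compact containment $\closure{W}_i\subset O_i$, and the fact that both $g_1$ and $g_2$ take values in $\mathcal{C}_n$, so that $t$ is a \emph{common} time function whose level sets are spacelike for both metrics. The standard repair (and the one underlying Ringström's proof) is a bootstrap in the global time function $t$ over nested lens-shaped regions whose compactness and causal good behaviour are supplied by exactly these hypotheses, together with the observation that where $v$ has already been shown to vanish the two causal structures coincide, allowing the domain of dependence to be enlarged step by step. Finally, your last sentence imports the assumption that the $W_i$ are tubular neighbourhoods of a common piece of $\Sigma$; the proposition only assumes $\Sigma_i$ is Cauchy in $(W_i,g_i)$, so the fact that every point of $W_1\cap W_2$ is reached by the propagation must itself be argued, not read off from a picture.
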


\subsection{DeTurck's Gauge Condition}\label{deturck_gauge_preliminaries}
In this section, we recall that the DeTurck gauge propagates well, and that one can locally implement it by acting with a diffeomorphism. We are interested in DeTurck's gauge condition, because it casts the Einstein vacuum equations a system of the form of quasi-linear hyperbolic PDEs with principal symbol given by the metric, as discussed in section~\ref{existence_ghd_preliminaries}.

\paragraph{The DeTurck gauge propagates well.} 
Let $\mathcal{D} \in \Gamma(T^*M)$ (later given by (\ref{ddefeq})), and assume that the modified Ricci tensor $\richat_{\mu\nu} = \ric_{\mu\nu}+\nabla_{(\mu}\mathcal{D}_{\nu)}$ (later as in (\ref{richateq})) satisfies the following \enquote{Einstein equations}:
\[ \richat - \frac{\trwith{g}\richat}{2}\: g = T \]
Herein, $T$ is any divergence-free two-tensor. We refer to $\mathcal{D} = 0$ as \textit{DeTurck's gauge condition}.

In this setting, an argument based on \cite[chapter 14.1]{cauchy} shows that the initial vanishing of $\mathcal{D}$ and $\nabla \mathcal{D}$ on a subset $\Omega$ of an initial hypersurface $\Sigma$ implies that $\mathcal{D}$ vanishes on the Cauchy development $D(\Omega)$, cf.\ also \cite[Lemma 5.6.]{oskar_phdthesis}.
\begin{lemma}\label{metricgaugefield_sourcefreewaveeq_lemma_preliminaries}
    Let $(M,g)$ be a globally hyperbolic Lorentzian manifold with spacelike Cauchy hypersurface $\Sigma$. Let $\bar{g}$ be another Lorentzian metric on $M$, the background metric. Assume that $\richat - \frac{1}{2}\trwith{g}\richat = T$, where $T$ a $(0,2)$-tensor such that $\divergence T = 0$. Assume that on $\Omega \subset \Sigma$, $\mathcal{D} = 0$ and $\nabla \mathcal{D} =0$. Then $\mathcal{D} = 0$ on the entire Cauchy development $D(\Omega)$.
\end{lemma}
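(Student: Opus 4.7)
The plan is to show that $\mathcal{D}$ itself satisfies a linear homogeneous wave equation as a consequence of the modified Einstein equations together with the twice-contracted Bianchi identity, and then invoke the uniqueness of solutions to such equations on a globally hyperbolic spacetime with vanishing Cauchy data.

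First I would take the divergence of the identity $\richat - \tfrac{1}{2}(\trwith{g}\richat)\:g = T$. Since $\divergence T = 0$ by hypothesis, the divergence of the left-hand side must vanish. Writing $\richat_{\mu\nu} = \ric_{\mu\nu} + \nabla_{(\mu}\mathcal{D}_{\nu)}$ and $\trwith{g}\richat = \rscal + \nabla^\alpha \mathcal{D}_\alpha$, and then applying the usual twice-contracted second Bianchi identity $\nabla^\mu\!\big(\ric_{\mu\nu} - \tfrac{1}{2}\rscal\:g_{\mu\nu}\big) = 0$, this reduces to
\[
    \nabla^\mu\!\left[\nabla_{(\mu}\mathcal{D}_{\nu)} - \tfrac{1}{2}(\nabla^\alpha\mathcal{D}_\alpha)\:g_{\mu\nu}\right] = 0.
\]
A short computation, using the Ricci identity $\nabla^\mu\nabla_\nu \mathcal{D}_\mu = \nabla_\nu \nabla^\mu \mathcal{D}_\mu - \ric_\nu{}^\alpha \mathcal{D}_\alpha$ to commute the covariant derivatives, makes the two $\nabla(\nabla\cdot\mathcal{D})$ contributions cancel and leaves the linear homogeneous equation
\[
    \Box \mathcal{D}_\nu - \ric_\nu{}^\alpha \mathcal{D}_\alpha = 0,
\]
whose principal symbol is $g^{\mu\nu}$.

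With this equation in hand, the remainder is standard. Global hyperbolicity of $(M,g)$ guarantees that the Cauchy development $D(\Omega)$ of the relatively open set $\Omega \subset \Sigma$ is well-defined, and a linear hyperbolic PDE with smooth coefficients and vanishing Cauchy data admits only the trivial solution on $D(\Omega)$ by finite speed of propagation (one may apply Proposition \ref{uniquesoldomainprop_preliminaries} locally, or directly cite the classical uniqueness theorem for linear wave equations on globally hyperbolic manifolds). The hypothesis $\mathcal{D}|_\Omega = 0$ together with $\nabla \mathcal{D}|_\Omega = 0$ (which in particular fixes the normal derivative of $\mathcal{D}$ along $\Sigma$) furnishes exactly this zero Cauchy data, and therefore $\mathcal{D} = 0$ on $D(\Omega)$.

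The main subtlety, rather than an obstacle, will be the careful bookkeeping of sign conventions when commuting covariant derivatives to produce the Ricci term; the derivation must confirm that the divergence of the symmetrised gradient piece is precisely cancelled by the divergence of the trace piece, so that no $\nabla(\nabla\cdot\mathcal{D})$ term survives to turn the equation into a full second-order operator in $\mathcal{D}$ with bad principal symbol. Once that cancellation is verified, the wave character of the resulting equation and the conclusion of the lemma follow immediately from globally hyperbolic uniqueness.
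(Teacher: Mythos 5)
Your argument is correct and is precisely the standard one the paper refers to (Ringström's Chapter 14.1 argument): divergence of the modified Einstein equation plus the contracted Bianchi identity yields the homogeneous linear wave equation $\Box\mathcal{D}_\nu - \ric_\nu{}^{\alpha}\mathcal{D}_\alpha = 0$, and vanishing Cauchy data on $\Omega$ forces $\mathcal{D}=0$ on $D(\Omega)$. The only cosmetic remark is that within this paper the natural uniqueness result to invoke at the final step is Theorem \ref{tensorwaveeqthm} (with $E\equiv 0$ and vanishing initial data on $\Omega$) rather than Proposition \ref{uniquesoldomainprop_preliminaries}.
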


\paragraph{Implementing the DeTurck gauge.} The existence of a diffeomorphism implementing the DeTurck gauge is a classical result \cite[section 2]{friedrich1985hyperbolicity}, cf.\ also \cite[proof of Theorem 14.3]{cauchy}.

Given a Lorentzian manifold $(M,g)$ and a background metric $\bar{g}$, DeTurck's gauge condition is
\begin{equation*}
    0 = \mathcal{D}_\mu[g,\bar{g}] = g_{\mu\nu}g^{\alpha\beta}(\Gamma^\nu_{\alpha\beta} - \bar{\Gamma}^\nu_{\alpha\beta}).
\end{equation*}
Note that the right-hand side is coordinate-independent and thus yields a well-defined covector $\mathcal{D}$. If it is clear which metrics are being referred to, we also write $\mathcal{D} = \mathcal{D}[g] = \mathcal{D}[g,\bar{g}]$. We discuss this gauge condition in Section \ref{gaugeconditions_section} and in particular in Remark \ref{deturck_gauge_rem}.

The following result is implicitly established in \cite[proof of Theorem 14.3]{cauchy}. For a proof, cf.\ \cite[Proposition 5.9.]{oskar_phdthesis}.
\begin{proposition}\label{deturckgauge_prop_preliminaries}
    Let $(M,\bar{g})$ and $(M',g)$ be smooth spacetimes, and $\Sigma$ a smooth spacelike hypersurface $M$ that is also embedded in $M'$ with embedding $i\colon\Sigma \hookrightarrow M'$. Denote by $N$ and $N'$ the respective future-directed unit normal on $\Sigma$ and $i(\Sigma)$. Then, for all $p \in i(\Sigma)$ there exist neighbourhoods $p \in W' \subset M'$ and $i^{-1}(p) \in W \subset M$ and a diffeomorphism $f \colon W \to W'$ such that
    \begin{enumerate}[label=(\roman*)]
        \item $f^*g$ is in DeTurck gauge with respect to $\bar{g}$, i.e $\mathcal{D}[f^*g] = 0$, and
        \item $f_*N_q = N'_{i(q)}$ for all $q \in W_\Sigma \coloneqq W \cap \Sigma$, and
        \item $\restr{f_*}{TW_\Sigma}= i_*$.
    \end{enumerate}
\end{proposition}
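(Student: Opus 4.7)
The plan is to reformulate the DeTurck condition on $f^*g$ as a quasi-linear hyperbolic PDE for $f$ itself and to solve it using the local existence theory of Section \ref{existence_ghd_preliminaries}. Choose coordinate charts $(W, x^\mu)$ adapted to $\Sigma$ around $i^{-1}(p) \in M$, so that $\Sigma \cap W = \{x^0 = 0\}$, and $(W', y^\nu)$ around $p \in M'$; write $f$ component-wise as $y^\nu = f^\nu(x)$. A direct computation of the Christoffel symbols of the pullback metric $f^*g$ in terms of $f^\nu$, $\partial_\mu f^\nu$, $\partial_\mu \partial_\nu f^\rho$, and $\Gamma^\mu_{\nu\rho}(g)|_{f(x)}$ shows that $\mathcal{D}^\mu[f^*g, \bar{g}] = 0$ is equivalent to the wave-map equation
\begin{equation*}
(f^*g)^{\alpha\beta}\left(\partial_\alpha \partial_\beta f^\mu - \bar{\Gamma}^\gamma_{\alpha\beta}\, \partial_\gamma f^\mu + \Gamma^\mu_{\nu\rho}(g)|_{f(x)}\, \partial_\alpha f^\nu \partial_\beta f^\rho\right) = 0,
\end{equation*}
a quasi-linear hyperbolic system for the components $f^\mu$ whose principal symbol is the Lorentzian inverse pullback metric, depending on $f$ through its values and first derivatives.

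Next, I prescribe Cauchy data on $\Sigma \cap W$ encoding (ii) and (iii). Setting $f|_{\Sigma \cap W}$ equal to the coordinate representation of $i$ fulfills (iii) directly. To enforce (ii), decompose the future-directed $\bar{g}$-unit normal as $N = N^0 \partial_0 + N^j \partial_j$ on $\Sigma \cap W$, note that $N^0 \neq 0$ because $\Sigma$ is spacelike and $x^0 = 0$ defines $\Sigma$, and then determine $\partial_0 f|_\Sigma$ uniquely from the requirement $f_*(N_q) = N'_{i(q)}$ for all $q \in \Sigma \cap W$. This pins down the first-order jet of $f$ along $\Sigma$. I then apply Proposition \ref{localexistenceprop_preliminaries}, or the version suited to the wave-map system, to obtain a smooth solution $f$ on a (possibly smaller) open neighborhood $W$ of $i^{-1}(p)$.

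It remains to verify that $f$ is a local diffeomorphism near $\Sigma$. On $\Sigma \cap W$, condition (iii) gives $f_*|_{T\Sigma} = i_*$, which is injective, while condition (ii) gives $f_* N = N'$, a vector linearly independent of $i_*(T_x \Sigma)$ in $T_{i(x)} M'$ (since $N'$ is normal to $i(\Sigma)$). Hence $f_*$ is a linear isomorphism at every point of $\Sigma \cap W$, and by the inverse function theorem I may shrink $W$ so that $f$ is a diffeomorphism onto its image $W' := f(W)$. Properties (i)--(iii) then hold by construction.

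The main technical step is the identification of $\mathcal{D}[f^*g, \bar{g}] = 0$ with the wave-map equation: this requires a careful computation of the Christoffel symbols of the pullback metric and amounts, up to a non-degenerate contraction, to recognising the tension field of $f \colon (M, \bar{g}) \to (M', g)$. The computation is standard but bookkeeping-heavy. A secondary point is that in Proposition \ref{localexistenceprop_preliminaries} the Lorentzian principal symbol appears as an independent component of $u$, whereas here $(f^*g)^{\alpha\beta}$ is a function of $f$ and $\partial f$; this is a mild adaptation rather than a genuine obstacle, handled either by adjoining $(f^*g)_{\mu\nu}$ as an auxiliary variable (whose definition in terms of $(f, \partial f)$ propagates as an algebraic constraint) or by direct appeal to the standard quasi-linear wave theory of which Proposition \ref{localexistenceprop_preliminaries} is a specialisation.
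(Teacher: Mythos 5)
Your proposal is correct and is essentially the classical argument that the paper itself defers to (Friedrich's \cite[section 2]{friedrich1985hyperbolicity} and Ringström's proof of Theorem 14.3): reformulate $\mathcal{D}[f^*g,\bar{g}]=0$ as the wave-map equation for $f$ from $(M,\bar{\nabla})$ to $(M',g)$, prescribe Cauchy data on $\Sigma$ realising (ii) and (iii), solve the resulting quasi-linear hyperbolic system, and shrink to where $f$ is a diffeomorphism. The only point worth making explicit is that the prescribed Cauchy data (namely $f_*|_{T\Sigma}=i_*$ with $i(\Sigma)$ spacelike and $f_*N=N'$ timelike) guarantee that $(f^*g)^{\alpha\beta}$ is Lorentzian with $\Sigma$ spacelike along the initial hypersurface, so the system is genuinely hyperbolic in a neighbourhood of $\Sigma$.
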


\subsection{Unions and Intersections of Globally Hyperbolic Subsets}\label{gh_subsets_preliminaries} 
We prove here two small auxiliary results employed in the patching together of local solutions (Theorem \ref{genEinsteinlocalexistencethm}) and local gauge transformations (Theorem \ref{genEinsteinlocaluniquenessthm}).

The following Lemma is implicitly established in \cite[proof of Theorem 14.2]{cauchy}. For a proof, cf.\ \cite[Lemma 5.10.]{oskar_phdthesis}.
\begin{lemma}\label{gh_union_lemma_preliminaries}
    Let $(M,g)$ be a Lorentzian manifold, $\Sigma \subset M$ a spacelike hypersurface, and $t \colon M \to \reals$ a smooth function such that $\grad t$ is timelike everywhere and $\Sigma = t^{-1}(0)$. For all $i$ in some index set $I$, let $W_i \subset M$ be an open subset such that $(W_i, \restr{g}{W_i})$ is globally hyperbolic with Cauchy hypersurface $W_i \cap \Sigma$. Then, $D= \bigcup_i W_i$ is globally hyperbolic with Cauchy hypersurface $D \cap \Sigma$.
\end{lemma}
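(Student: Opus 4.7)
The approach is to show that $D\cap\Sigma$ is a Cauchy hypersurface for $(D,\restr{g}{D})$, from which global hyperbolicity follows by the classical Geroch theorem. Since $D$ is open in $M$ and $\Sigma = t^{-1}(0)$ is a smoothly embedded spacelike hypersurface of $M$, $D\cap\Sigma$ is automatically a smooth spacelike hypersurface of $D$. The remaining task is to verify that every inextendible causal curve $\gamma$ in $D$ meets $D\cap\Sigma$ exactly once.

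The uniqueness of the intersection and the achronality of $D\cap\Sigma$ are immediate consequences of the hypothesis that $\grad t$ is timelike on all of $M$. Indeed, along any causal curve in $D$ the composition $t\circ\gamma$ is strictly monotonic, so $\gamma$ can meet the level set $\{t=0\}$ in at most one point, and two points of $D\cap\Sigma$ cannot be connected by a timelike curve.

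The existence of a crossing is the main issue. Let $\gamma\colon(a,b)\to D$ be future-directed and inextendible in $D$, parametrized so that $t\circ\gamma$ is strictly increasing, and suppose for contradiction that $\gamma$ avoids $\Sigma$; after time reversal if needed, we may assume $t\circ\gamma>0$ everywhere and work in the past direction. The plan is a continuation argument along $\gamma$: fix $s_0\in(a,b)$ and an index $i_0$ with $\gamma(s_0)\in W_{i_0}$, and set $s_1 = \inf\{s\in(a,b) : \gamma([s,s_0])\subset W_{i_0}\}$. Because $W_{i_0}\cap\Sigma$ is Cauchy in $W_{i_0}$, if $\gamma$ restricted to $(s_1,s_0]$ were past-inextendible in $W_{i_0}$ it would have to hit $W_{i_0}\cap\Sigma\subset\Sigma$, contradicting $t\circ\gamma>0$. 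Hence $s_1>a$ and $\gamma(s_1)\in D\setminus W_{i_0}$; since $\{W_i\}$ covers $D$, some $W_{i_1}$ picks up the curve at $s_1$, and one iterates, producing a strictly decreasing sequence of exit parameters $s_0>s_1>s_2>\dots\ge a$.

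The main obstacle is controlling this iteration. The sequence $(s_k)$ has a limit $s_\infty\ge a$, and I expect to derive the contradiction as follows. If $s_\infty>a$, then $\gamma(s_\infty)\in D$ lies in some $W_j$, and strong causality of $W_j$ together with openness forces $\gamma([s_\infty-\varepsilon,s_0])\subset\bigcup_{k\le K}W_{i_k}$ for large $K$ and small $\varepsilon$, allowing one to collapse the iteration to finitely many patches and conclude directly from Cauchyness in one of them. If $s_\infty=a$, past-inextendibility of $\gamma$ in $D$ must be exploited: any accumulation point $p^*$ of $\gamma(s_k)$ would satisfy $t(p^*)\ge 0$, so if $p^*\in D$ it would lie in some $W_j$ and furnish a past extension of $\gamma$ in $D$, contradicting inextendibility. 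An alternative, likely cleaner, route is to use the integral curves of the normalised timelike vector field $-\grad t/g(\grad t,\grad t)$: on each $W_i$ these foliate $W_i$ as $\reals\times(W_i\cap\Sigma)$ because $W_i\cap\Sigma$ is Cauchy, and since $t$ is global on $M$ these product structures glue coherently on overlaps, yielding $D\cong\reals\times(D\cap\Sigma)$ and hence Cauchyness at once.
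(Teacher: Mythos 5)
The paper does not print a proof of this lemma (it defers to \cite[Lemma 5.10]{oskar_phdthesis} and to the proof of \cite[Theorem 14.2]{cauchy}), so I can only judge your argument on its own terms. Your skeleton is the right one: acausality of $D\cap\Sigma$ from strict monotonicity of $t\circ\gamma$ along causal curves, existence of a crossing by localising to a single patch, and Geroch's theorem to pass from \enquote{admits a Cauchy hypersurface} to \enquote{globally hyperbolic}. But the existence step, which you yourself flag as the main obstacle, is where the argument goes wrong — and the irony is that your own first step already contains the contradiction you are looking for, so the entire iteration is unnecessary. Having defined $s_1=\inf\{s:\gamma([s,s_0])\subset W_{i_0}\}$, there are two cases. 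If $s_1=a$, then $\restr{\gamma}{(a,s_0]}$ lies in $W_{i_0}$ and is past-inextendible in $D$, hence a fortiori past-inextendible in the open subset $W_{i_0}$. If $s_1>a$, then $\gamma(s_1)\notin W_{i_0}$ (else openness of $W_{i_0}$ would contradict the infimum), so $\restr{\gamma}{(s_1,s_0]}$ has no past endpoint \emph{in} $W_{i_0}$ — which is precisely the definition of being past-inextendible in $W_{i_0}$. In either case $\restr{\gamma}{(s_1,s_0]}$ is a past-inextendible causal curve in $W_{i_0}$ starting at a point of $I^+(W_{i_0}\cap\Sigma;W_{i_0})$ (since $t(\gamma(s_0))>0$ and $W_{i_0}$ decomposes as $I^-\sqcup(W_{i_0}\cap\Sigma)\sqcup I^+$), so it must meet the Cauchy hypersurface $W_{i_0}\cap\Sigma$, contradicting $t\circ\gamma>0$. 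Your text instead concludes from \enquote{past-inextendible in $W_{i_0}$ would give a contradiction} that the curve exits $W_{i_0}$ at $s_1$ — but exiting $W_{i_0}$ is exactly what makes the restriction past-inextendible in $W_{i_0}$, so the sentence \enquote{Hence $s_1>a$ and $\gamma(s_1)\in D\setminus W_{i_0}$} is inconsistent with what precedes it. This suggests you are implicitly reading \enquote{past-inextendible in $W_{i_0}$} as something like \enquote{inextendible in $M$} or \enquote{defined on an unbounded parameter interval}, which is not the relevant notion.

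Because of this, the remainder of the proposal does not repair itself. The patch-to-patch iteration, with its limit parameter $s_\infty$, is left at the level of \enquote{I expect to derive the contradiction as follows}, and the two subcases are genuinely incomplete: for $s_\infty>a$ the appeal to strong causality of $W_j$ to \enquote{collapse the iteration to finitely many patches} is not an argument, and for $s_\infty=a$ the existence of an accumulation point $p^*$ of $\gamma(s_k)$ in $D$ is not guaranteed (inextendible causal curves can leave every compact set). The proposed \enquote{cleaner alternative} via the flow of $-\grad t/g(\grad t,\grad t)$ also does not work as stated: the maximal flow domains are point-dependent intervals, so one obtains at best a diffeomorphism of $D$ with a variable-height cylinder over $D\cap\Sigma$, not with $\reals\times(D\cap\Sigma)$; and even granting a topological product with timelike fibres, Cauchyness of the slice does not follow \enquote{at once} — one still has to show that every inextendible causal curve (not just the fibres) crosses the slice, which is the whole content of the lemma. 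The fix is simply to delete the iteration and run the single-patch argument above to completion.
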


A result similar to the following is established in \cite[proof of Theorem 14.3]{cauchy}. For a proof, cf.\ \cite[Lemma 5.11.]{oskar_phdthesis}.
\begin{lemma}\label{intersection_gh_lemma_preliminaries}
    Let $(M,g)$ be globally hyperbolic with Cauchy hypersurface $\Sigma$. Let $U$ and $V$ be globally hyperbolic subsets with $U\cap V \neq \emptyset$ such that $U_\Sigma = U \cap \Sigma$ and $V_\Sigma = V \cap \Sigma$ are Cauchy hypersurfaces in $U$ and $V$, respectively. 
    
    Then $U\cap V$ is globally hyperbolic with Cauchy hypersurface $U_\Sigma \cap V_\Sigma$. In particular, an isometry $f\colon (U\cap V,g) \to (M',g')$ is uniquely determined by the restriction of its differential to $U_\Sigma \cap V_\Sigma$.
\end{lemma}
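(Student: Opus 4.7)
The plan is to verify that $W := U_\Sigma \cap V_\Sigma$ is a Cauchy hypersurface of $U \cap V$, which gives global hyperbolicity of $U \cap V$; the isometry statement then drops out of standard rigidity. The main tool I would use for the Cauchy claim is a smooth Cauchy temporal function $\tau : M \to \reals$ with $\tau^{-1}(0) = \Sigma$, available from Bernal-Sánchez since $(M,g)$ is globally hyperbolic with Cauchy hypersurface $\Sigma$. By construction $\tau$ is strictly increasing along every future-directed causal curve in $M$, and its zero level set is $\Sigma$.

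Given an inextendible causal curve $\gamma : (a,b) \to U \cap V$, taken WLOG future-directed, uniqueness of its intersection with $W \subset \Sigma$ is immediate from strict monotonicity of $\tau \circ \gamma$. For existence I argue by contradiction: assume $\gamma$ does not meet $\Sigma$, and WLOG $\tau \circ \gamma > 0$ on $(a,b)$. Extend $\gamma$ to a maximal inextendible causal curve $\tilde\gamma_V : I_V \to V$; by the Cauchy property of $V_\Sigma$ in $V$ there is a unique $s_V^\star \in I_V$ with $\tilde\gamma_V(s_V^\star) \in V_\Sigma$, i.e.\ $\tau(\tilde\gamma_V(s_V^\star)) = 0$. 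Strict monotonicity of $\tau \circ \tilde\gamma_V$ together with $\tau > 0$ on $(a,b) \subset I_V$ forces $s_V^\star \leq a$, and since $I_V$ is an open interval containing both $s_V^\star$ and $(a,b)$, we deduce $a \in I_V$ so that $\tilde\gamma_V(a) = \gamma(a^+) \in V$. The analogous argument in $U$ yields $\gamma(a^+) \in U$, hence $\gamma(a^+) \in U \cap V$. Openness of $U \cap V$ and continuity of $\tilde\gamma_V$ then provide $\epsilon > 0$ with $\tilde\gamma_V((a-\epsilon, a+\epsilon)) \subset U \cap V$, producing a causal extension of $\gamma$ to the strictly larger interval $(a-\epsilon, b)$ inside $U \cap V$, contradicting inextendibility. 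So $\gamma$ must cross $\Sigma$, necessarily at a point of $W$.

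For the uniqueness of isometries, let $f_1, f_2 : (U \cap V, g) \to (M', g')$ be isometries whose restrictions to $W$ and whose tangential differentials along $TW$ agree. Then $h := f_2^{-1} \circ f_1$ is a local isometry of $(U \cap V, g)$ fixing $W$ pointwise with identity differential on $TW$. Since $W$ has codimension one and $h$ is an isometry, $dh_p$ at $p \in W$ can differ from $\id$ only by the sign of its action on the unit normal $N_p$; the standing convention that diffeomorphisms in this setting preserve time-orientation pins this sign to $+1$, so $dh_p = \id$ for all $p \in W$. The classical rigidity of isometries of connected semi-Riemannian manifolds (determined by value and differential at a single point) then forces $h = \id$ on each connected component of $U \cap V$, and because $W$ meets every connected component by the first part, $f_1 = f_2$ on all of $U \cap V$.

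The main obstacle is the coordination step: one needs the limit $\gamma(a^+)$ to lie in both $U$ and $V$ at once, and for this it is essential that $U_\Sigma$ and $V_\Sigma$ are restrictions to $U$ and $V$ of the common Cauchy hypersurface $\Sigma$ of $M$. The single temporal function $\tau$ arbitrates the maximal extensions in $U$ and $V$ compatibly, allowing both copies of the inequality $s_\bullet^\star \leq a$ to be played against the inextendibility of $\gamma$ in $U \cap V$; without a common Cauchy hypersurface in the ambient $M$ there would be no reason the two extensions should meet on $\Sigma$ simultaneously.
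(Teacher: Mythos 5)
The paper does not actually prove this lemma in the text --- it defers to Lemma 5.11 of the author's thesis --- so there is no in-paper argument to compare against; judged on its own, your proof is correct and is the standard argument one would expect. The key move (a single Cauchy temporal function for $(M,\Sigma)$ arbitrating the maximal extensions in $U$ and in $V$, so that a causal curve in $U\cap V$ avoiding $\Sigma$ acquires a limit endpoint lying in both $U$ and $V$, contradicting inextendibility) is sound, and the rigidity argument for the isometry statement is the classical one. Two cosmetic loose ends: in the case $a=-\infty$ the inequality $s_V^\star\leq a$ is already a contradiction and no limit point is needed, and the Bernal--Sánchez input requires $\Sigma$ to be a smooth spacelike Cauchy hypersurface, which is the case in every application of the lemma in this paper.
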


\section{The Generalised Einstein Equations}\label{geneinsteineqs_section}

In this section, we discuss the generalised Einstein equations (GEE). In this work, we do not focus on their description in generalised geometry and instead state the equations in terms of objects naturally living on the manifold $M$. The interested reader is referred to \cite{fernandez, genricciflow,streets2024genricciflow, vicenteoskar, vicentethomas}.

Let $(M,g)$ be a Lorentzian manifold, $H \in \Omega^3_{\mathrm{cl}}(M)$ a closed three-form, $X \in \Gamma(TM)$ a vector field and $\xi \in \Gamma(T^*M)$ a one-form. Denote by $H^2 \in \Sym^2(M)$ the symmetric two-tensor obtained from $H \otimes H$ by making two non-trivial contractions with the metric $g$, more precisely $H^2_{\mu\nu} = g^{\kappa \lambda} g^{\rho \pi} H_{\mu\kappa\rho} H_{\nu\lambda\pi}$.
\begin{definition}
    The tuple $(M,g,H,X,\xi)$ is called \textit{generalised Ricci flat} if \cite[Corollary 4.7]{vicentethomas}
    \begin{equation} \label{geneinsteineqs}
        \begin{split}
            4\:\mathrm{Rc} &= H^2 - 4 [\nabla\xi]^\sym, \qquad\quad \extd^*H = 2 [\nabla X^\flat]^\antisym - i_\xi H
        \end{split}
    \end{equation}
    Furthermore, $(M,g,H,X,\xi)$ is called \textit{generalised scalar flat} if \cite[Corollary 4.4]{vicentethomas}
    \begin{equation*}
        \rscal = \frac{\absolute{H}^2}{12} + 2 \extd^*\xi + \absolute{X}_g^2 + \absolute{\xi}_g^2.
    \end{equation*}
    Finally, $(M,g,H,X,\xi)$ is called \textit{generalised Einstein (in the string frame)} if it is both generalised Ricci and scalar flat. 
\end{definition}
\begin{remark}\label{divoprem}
    In generalised geometry, the GEE refer to equations for a pair $(\genmet, \divergence)$ consisting of a generalised metric $\genmet$ and a divergence operator $\divergence$ on an exact Courant algebroid. In our framework, the metric $g$ and the three-form $H$ encode the generalised metric, and the \enquote{generalised vector} $X + \xi$ encodes the divergence operator, cf.\ also \cite[Proposition 2.38. and §2.4]{genricciflow}. We note that we introduced a conventional factor in the definition of $X$ and $\xi$. Writing as explained in \cite{genricciflow} $\divergence = \divergence^\genmet - \scalbrack{e,\cdot}$, we set $e = 2 (X +\xi)$, whereas in \cite{genricciflow} the convention is $e = X + \xi$. This agrees with the conventions employed in \cite{vicentethomas}.
\end{remark}
\begin{remark}
    Note that generalised Ricci flatness does not in general imply generalised scalar flatness.
\end{remark}
\begin{remark}
    Compared to the condition of generalised Ricci flatness found in \cite[Proposition 3.30]{genricciflow}, our version does not include the compatibility equations $L_X g = 0$ and $\extd \xi = i_X H$. They ensure compatibility of the divergence operator and the generalised metric in the sense that they are equivalent to the generalised vector field $X +\xi$ being an infinitesimal isometry \cite[Lemma 3.32.]{genricciflow}. Note that the compatibility equations are implied by our assumption (\ref{closeddilatonconditioneq}). An in-depth account of the different notions of curvature employed in the generalised geometry literature has been given in \cite{rubiogenricequivalence}.
\end{remark}

For our discussion of the initial value problem, we restrict our attention to the case of 
\begin{equation}\label{closeddilatonconditioneq}
    X= 0 \qquad \text{and} \qquad \extd \xi = 0
\end{equation}
As mentioned in Remark \ref{divoprem}, in generalised geometry, the generalised vector $X + \xi \in \Gamma(TM \oplus T^*M)$ comes from a pair consisting of a generalised metric and a divergence operator. Whether or not (\ref{closeddilatonconditioneq}) is satisfied depends only on the choice of divergence operator (although the precise choice of $\xi$ depends on the generalised metric). Hence, we refer to condition (\ref{closeddilatonconditioneq}) also as \textit{the divergence operator being closed}\footnote{Note that this nomenclature is taken from \cite[Definition 3.40]{genricciflow}, where a pair consisting of a generalised metric and a divergence operator is called closed if it satisfies (\ref{closeddilatonconditioneq}).}. It is called exact, if $\xi = \extd \phi$ for a function $\phi \in C^\infty(M)$. The exact case is of particular importance for three reasons. 
\begin{enumerate}[label = (\arabic*)]
    \item The GEE for exact divergence can be derived from a variational principle. This is done by defining the generalised Einstein Hilbert action to be the integral of the generalised scalar curvature (with measure defined by the divergence operator), and then varying over the space of generalised metrics and exact divergence operators \cite[§ 3.7]{genricciflow}.
    \item In the case of exact divergence, $\xi = \extd\phi$ for some $\phi \in C^\infty(M)$, the GEE are known to correspond to the equations of motion for the bosonic part of the NS-NS sector in type II ten dimensional theories of supergravity \cite{waldramsupergravity}. Under this correspondence, the function $\phi$ takes on the role of the \textit{dilaton}, a physical field.
    \item The correspondence to supergravity allows us to borrow a trick well-known in the physics literature: going into the Einstein frame. This is a name for a conformal transformation of the metric with the dilaton $\phi$. The trick is crucial to our analysis, as only in the Einstein frame we see the PDE to be a wave equation of a form that we recognise and can deal with. We elaborate on this in Section \ref{einsteinframe_section}.
\end{enumerate}
The reasons (1) and (3) given above indicate why working in the slightly more general case of a closed dilaton $\xi$ is feasible. This is because in both cases, access to a potential $\xi =\extd \phi$ is only required locally. For reason (1), this is because it suffices to define a generalised Einstein Hilbert action locally to obtain the GEE (which are local equations) from their variation. For reason (3), it is because we will only employ the Einstein frame locally.

We want to rewrite the GEE slightly. By (\ref{geneinsteineqs}) we have
\begin{equation}\label{scalcruveq}
    \rscal = \frac{\absolute{H}^2}{4} + \extd^*\xi.
\end{equation}
Thus, $\genric = 0$ and $\genscal = 0$ combined are equivalent to
\begin{equation}\label{combinedgeneinsteineqs}
    \extd^*H = - i_{\xi} H, \qquad\quad \mathrm{Rc} = \frac{H^2}{4} - \nabla \xi, \qquad\quad \frac{\absolute{H}^2}{6} = \extd^*\xi + \absolute{\xi}^2.
\end{equation}
We refer to the first equation as \textit{the $H$-field equation}, to the second equation as \textit{the Ricci equation}, and to the third equation as \textit{the dilaton equation}. Writing $H = \Bar{H}+\extd B$ for a closed three-form $\Bar{H}$ and a two-form $B$, we also refer to the first equation as \textit{the $B$-field equation}. 
\section{The Initial Value Problem in for the Generalised Einstein Equations}

In solving the initial value problem, we closely follow \cite{cauchy}, which proves the existence of a maximal globally hyperbolic development (MGHD) in the case of the Einstein equations coupled to a scalar field. It should be noted that \cite{cauchy} also provides a comprehensive introduction to the required mathematical theory, and for that reason is the main source for the present paper.

The main ingredient to proving the existence of an MGHD in \cite{cauchy} is that locally, over a suitable coordinate patch, the Einstein equations become a system of PDEs for a vector valued function $u\colon \reals^{n+1} \to \reals^N$, $1\leq n\in\mathbb{N}$ that takes, after some modification, the form
\begin{equation}\label{hyperbolicpdeeq}
    \begin{split}
        g[u]^{\mu\nu} \partial_\mu \partial_\nu u &= f[u], \\
        u(T_0, \cdot ) &= U_0, \\
        \partial_t u(T_0, \cdot) &= U_1,
    \end{split}
\end{equation}
where $T_0 \in \reals$, $g \in C^\infty(\reals^{N+(n+1)N+n+1}, \mathcal{C}_n)$ a so-called \textit{$C^\infty$ ($N,n$)-admissible metric} ($\mathcal{C}_n$ denotes the space of canonical Lorentz matrices), $f\in C^\infty(\reals^{N+(n+1)N+n+1},\reals^N)$ a so-called \textit{$C^\infty$ ($N,n$)-admissible non-linearity}, and $U_0,U_1 \in C^\infty_0(\reals^n, \reals^N)$ initial data. We employed the notation
\begin{equation*}
    g[u](t,x) \coloneqq g(t,x,u(t,x),\partial_0u(t,x),...,\partial_nu(t,x))
\end{equation*}
and similarly for $f$. We provide the definitions of the two notions of $C^\infty$ $(N,n)$-admissibility in appendix \ref{nonlinear_waveeqs_appendix}. Of main importance to us is \cite[Corollary 9.16.]{cauchy} (which we include for completeness in this work as Theorem \ref{uniqueexistencehyperbolicpdethm}): The system of PDEs (\ref{hyperbolicpdeeq}) admits a unique maximal solution.

\begin{remark}\label{einstein_matter_local_form_rem}
    It is known that, locally and in the right gauge, the vacuum Einstein equations \cite[Theorem 7.1.]{bruhatGRBook}, the Einstein-Maxwell equations \cite[§10.1.]{bruhatGRBook}, and the Einstein equations coupled to a scalar field \cite[§14]{cauchy} are of the form (\ref{hyperbolicpdeeq}). A sizeable part of this work will be to prove that, in fact, the GEE are also locally of this form. 
\end{remark}

Evidently, the equations (\ref{hyperbolicpdeeq}) are of second order. To view the GEE as a second order system, we have to work with potentials $B$ and $\phi$ that respectively encode the dynamics of $H$ and $\xi$. Regarding the dilaton, we write locally $\xi = \extd \phi$. Regarding the three-form, we take another approach and introduce a \enquote{background field} $\bar{H}$ and split $ H = \bar{H} + \extd B$. Then we have $B$ as the dynamical object, meaning that its development will be defined by solving the equations of motion, while the background field $\Bar{H}$ encodes global data, i.e.\ the cohomology class $[H]$. Note that all developments we consider will be defined on a tubular neighbourhood $D$ of $\Sigma_0 \coloneqq \{0 \} \times \Sigma$ in $M=\reals \times \Sigma$. In particular, $\Sigma$ will be a deformation retract of the manifold $D$, implying that the initial value of $H$ on $\Sigma$ determines the cohomology class $[H]$. The approach allows for a simple geometric condition uniquely determining the initial value of the $B$-field, making it straightforward to obtain local uniqueness of solutions: Choosing the background field such that initially $\restr{\bar{H}}{\Sigma} = \restr{H}{\Sigma}$, one may demand $\restr{B}{\Sigma} = 0$. 

Implementing $H = \bar{H} + \extd B$ and $\xi = \extd \phi$ in the GEE (\ref{combinedgeneinsteineqs}), we obtain the system 
\begin{equation}\label{combinedgeneinsteineqs_inpotentials}
    \extd^*\extd B =  - \extd^*\bar{H} - i_{\xi} H, \qquad\quad \mathrm{Rc} = \frac{H^2}{4} - \nabla^2 \phi, \qquad\quad \Box \phi =  - \frac{\absolute{H}^2}{6} + \absolute{\xi}^2,
\end{equation}
where we used that $\Box \phi = \nabla^\mu \nabla_\mu \phi = - \extd^* \extd \phi$.

Note that, taken at its face, the system (\ref{combinedgeneinsteineqs_inpotentials}) is not of the form (\ref{hyperbolicpdeeq}). In the Ricci equation, there are two issues, and a third issue is in the $B$-field equation. 

The first issue arises from the second order term $\nabla^2\phi$. This causes the equation not to be locally of the desired form, since the vector-valued function $u$ has to encompass all dynamical objects $g_{\mu\nu}, B_{\mu\nu}$, and $\phi$, i.e. roughly\footnote{Of course, this would not yield a well-defined function on $\reals^{n+1}$, as one needs to make use of a coordinate chart to be able to speak of the components. But, for now, to discuss the shape of the PDEs, we will ignore this technical detail.}
\begin{equation*}
    u = (g_{\mu\nu}, B_{\mu\nu}, \phi)
\end{equation*}
This can be overcome by a conformal transformation of the metric: 
\begin{equation*}
    \Tilde{g} \coloneqq e^{-2\phi\: /(d-2)} g
\end{equation*}
Under this conformal transformation, the Ricci tensor absorbs the $\nabla^2\phi$-term. Considering then $(\Tilde{g}_{\mu\nu}, B_{\mu\nu}, \phi)$ as our dynamical variables, the issue is solved. In the physics literature, this is known as \enquote{adopting the Einstein frame}. The name is due to the fact that only in this \enquote{frame} energy-momentum is off-shell conserved, i.e.\ without imposing the Ricci-equation and only due to the dilaton and $B$-field equations one has
\begin{equation*}
    \divergence_{\Tilde{g}}T[\Tilde{g},B, \phi] = 0
\end{equation*}
where $T$ is the energy-momentum tensor. We discuss this in Section \ref{einsteinframe_section}.

The other two issues are similar: In the Ricci and $B$-field equation, the respective differential operator acting on the metric or $B$-field is not locally of the form $g^{\mu\nu}\partial_\mu \partial_\nu +  \text{lower order derivatives}$. DeTurck's gauge (among others) was developed precisely to solve this issue. Drawing from the DeTurck and Lorenz gauge, we introduce a gauge condition that achieves the same for the $B$-field equation. We describe both gauge conditions in Section \ref{gaugeconditions_section}.

\subsection{The Initial Value Formulation}\label{ivp_formulation_section}

Our goal is to show that the initial value problem to the GEE (\ref{combinedgeneinsteineqs}) is well-behaved. That is, we want to show that every choice of initial data for the equations uniquely admits a development. So, to begin with, we have to understand what constitutes initial data for the equations (\ref{combinedgeneinsteineqs}).

Initial data for the vacuum Einstein equations is well-understood (cf.\ e.g.\ \cite[§14.2.]{cauchy}). It consists of a Riemannian manifold $(\Sigma, g_\Sigma)$ and a symmetric two-tensor $k$ on $\Sigma$ (representing the second fundamental form), all such that the constraint equations are satisfied (cf.\ \cite[cf.\ Proposition~13.3]{cauchy})
\begin{equation*}
    \begin{split}
        \rscal_\Sigma + (\tr k)^2 - \absolute{k}^2 = 0 \\
        \divergence_\Sigma k - \nabla^\Sigma \tr k = 0
    \end{split}
\end{equation*}
Herein, $\nabla^\Sigma$ denotes the LC connection of $g_\Sigma$, $\rscal_\Sigma$ its scalar curvature, and $\divergence_\Sigma k$ the contraction of the first two indices of $\nabla^\Sigma k$ with $g_\Sigma$. A \textit{development} of this data is a semi-Riemannian embedding $\iota\colon (\Sigma, g_\Sigma) \hookrightarrow (M,g)$ into a Lorentzian manifold $(M,g)$ satisfying the Einstein equations. The development is called \textit{globally hyperbolic}, if $(M,g)$ is.

For the GEE, we have with $H$ and $\xi$ two additional fields, for which the initial data has to determine the initial value. Notice that the equation of motion for $H$ and $\xi$ is first order. Hence, we expect initial data for (\ref{combinedgeneinsteineqs}) to additionally include the following objects defined on $\Sigma$: a closed three-form $H_0^\parallel$, a two-form $h_0$, a closed one-form $\xi_0^\parallel$ and a real-valued function $x_0$. A \textit{development} of this data is then a Riemannian embedding $\iota\colon(\Sigma, g_\Sigma) \hookrightarrow (M,g)$ into a Lorentzian manifold $(M,g)$, together with a closed three-form $H \in \Omega^3_{\mathrm{cl}}(M)$ and a closed one-form $\xi \in \Omega^1_{\mathrm{cl}}(M)$, all such that the GEE are satisfied and on $\iota(\Sigma)$
\begin{equation}\label{initialdatacompatbilityeq}
    \iota^*H = H_0^\parallel, \qquad\quad \iota^*[H(N)] = h_0, \qquad\quad \iota^*\xi = \xi_0^\parallel , \qquad\quad \iota^*[\xi(N)] = x_0.
\end{equation}
In the formalism of generalised geometry, one can prove that for a tuple of objects $(\Sigma, g_\Sigma,k, H_0^\parallel, h_0, \xi_0^\parallel, x_0)$ as above to have a development, it is necessary that \cite[Corollary 5.22.]{vicenteoskar}
\begin{equation}\label{genconstrainteqs}
    \begin{split}
        \rscal_\Sigma + (\tr k)^2 - \absolute{k}^2 &= \frac{\absolute{H^\parallel_0}^2}{12} + \frac{\absolute{h_0}^2}{4}  +2 (\extd^\Sigma)^*\xi^\parallel_0 + 2 (\tr k) x_0  + \absolute{\xi^\parallel_0}^2-x^2_0 \\
        \divergence_{\Sigma} k - \extd^\Sigma \tr k &= \frac{1}{4}C(h_0, H^\parallel_0) - \extd^\Sigma x_0 + i_{\xi^\parallel_0} k \\
        0  &=\left((\extd^\Sigma)^* +i_{\xi^\parallel_0}\right) h_0 \\
    \end{split}
\end{equation}
Herein, we denoted by $C(h_0,H_0^\parallel)$ the one-form obtained from $h_0$ and $H_0^\parallel$ by contracting indices as follows:
\begin{equation*}
    C_i(h_0,H_0^\parallel) = g_\Sigma^{kl}g_\Sigma^{mn} (h_0)_{km} (H_0^\parallel)_{lni}
\end{equation*}
Equations (\ref{genconstrainteqs}) are the (string frame) constraint equations for the GEE. 
\begin{remark}
    Instead of obtaining these constraint equations from the generalised Gauß and Codazzi equations established in \cite{vicenteoskar}, one can also obtain them from a more down-to-earth viewpoint. In the GEE (\ref{combinedgeneinsteineqs}), the Ricci equation is known to produce constraint equations, 
    and these are the first two equations in (\ref{genconstrainteqs}). The $H$-field equation is a form wave equation, and we see these to produce a constraint equation in Theorem \ref{coexact_wave_eqs_main_thm}. This is the third equation in (\ref{genconstrainteqs}).
\end{remark}

We formalise the notions of initial data, constraint equations, and the IVP for the GEE in the following two definitions. These are based on the analogous definition for the Einstein equations coupled to a scalar field in \cite[Definition 14.1]{cauchy}. 
\begin{definition}\label{initialdata_stringframe_def}
    \textit{Initial data (in terms of fields)} for the (string frame) GEE (\ref{combinedgeneinsteineqs}) in $n+1$ dimensions is a tuple $(\Sigma,g_0,k,H^\parallel_0, h_0, \xi^\parallel_0, x_0)$ consisting of an $n$-dimensional Riemannian manifold $(\Sigma,g_\Sigma)$, and the following objects on $\Sigma$: 
    \begin{enumerate}[label = (\arabic*)]
        \item a symmetric $(0,2)$-tensor $k$,
        \item a closed three-form $H^\parallel_0$ and a two-form $h_0$,
        \item a closed one-form $\xi^\parallel_0$ and a real-valued function $x_0$.
    \end{enumerate} 
    all such that the \textit{constraint equations} (\ref{genconstrainteqs}) are satisfied.
\end{definition}
\begin{definition}\label{development_stringframe_def}
    Let $(\Sigma,g_0,k,H^\parallel_0, h_0, \phi_0, \phi_1)$ be initial data. Then the \textit{initial value problem} (IVP) is to find a \textit{(string frame) development} of the initial data, that is a SuGra spacetime $(M,g,H,\phi)$ satisfying the (string frame) GEE and a Riemannian embedding $i \colon (\Sigma,g_0) \hookrightarrow (M,g)$ such that $k$ corresponds to the second fundamental form on $i(\Sigma)$ and $H$ and $\phi$ induce on $\Sigma$ the given initial data via the relations (\ref{initialdatacompatbilityeq}). A development $(M,g,H,\phi)$ is called globally hyperbolic, if $i(\Sigma)$ is a Cauchy hypersurface in $(M,g)$. 
\end{definition}

Given a background three-form $\bar{H}$ and working locally, we can express the fields $H$ and $\xi$ via potentials $B$ and $\phi$:
\begin{equation}\label{potentialeq}
    H = \bar{H} + \extd B, \qquad \xi = \extd \phi.
\end{equation}
Note that the initial data for $\phi$ is described by $\phi_0 = \restr{\phi}{\Sigma}$ and $\phi_1 = N(\phi)$. It holds $\xi_0^\parallel = \extd^\Sigma\phi_0$ and $x_0 = \phi_1$. 

To discuss initial data in terms of the potential $B$, we consider the following
\begin{lemma}\label{hypersurfacepotfieldsrelationlemma}
    Let $H = \extd B$. Furthermore, express $B$ and $\nabla_N B$ in terms of objects natural on $\Sigma$:
    \begin{equation}\label{bfielddecompeq}
        \restr{B}{\Sigma} = B_0^\parallel - N^\flat \wedge b_0, \qquad\qquad \restr{\nabla_N B}{\Sigma} = B_1^\parallel - N^\flat \wedge b_1
    \end{equation}
    where $B_0^\parallel, B_1^\parallel \in \Omega^2(\Sigma)$ and $b_0, b_1 \in \Omega^1(\Sigma)$.
    Then, with $H_0^\parallel$ and $h_0$ as in (\ref{initialdatacompatbilityeq}),
    \begin{equation*}
        \begin{split}
            H_0^\parallel &= \extd^\Sigma B_0^\parallel, \qquad\quad h_0 = B_1^\parallel - \extd^\Sigma b_0 - k\cdot B_0^\parallel. \\
        \end{split}
    \end{equation*}
    Herein, $\cdot$ is the natural action of endomorphisms on differential forms.
\end{lemma}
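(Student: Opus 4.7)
The plan is to compute the two quantities $H_0^\parallel = \iota^*H$ and $h_0 = \iota^*(i_N H)$ directly from $H=\extd B$, exploiting that $\iota^*$ commutes with $\extd$ and that $\iota^* N^\flat = 0$ (so $\iota^*(N^\flat \wedge \alpha) = 0$ for any form $\alpha$).

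The first identity is essentially immediate. Pulling back the given decomposition of $\restr{B}{\Sigma}$ kills the term $N^\flat \wedge b_0$, so $\iota^*B = B_0^\parallel$; combining this with naturality of the exterior derivative gives $H_0^\parallel = \iota^*(\extd B) = \extd^\Sigma(\iota^*B) = \extd^\Sigma B_0^\parallel$.

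For $h_0$, the main tool is Cartan's magic formula applied to the unit normal $N$: $L_N B = i_N\extd B + \extd(i_N B)$, which rearranges to $i_N H = L_N B - \extd(i_N B)$. Pulling back to $\Sigma$, the exact piece is easy: $\iota^*(\extd(i_N B)) = \extd^\Sigma(\iota^*(i_N B)) = \extd^\Sigma b_0$ by naturality of $\extd$ and the very definition $b_0 = \iota^*(i_N B)$. For the Lie derivative piece I would rewrite it using the torsion-free Levi-Civita connection: for vector fields $X,Y$,
\[ (L_N B)(X,Y) = (\nabla_N B)(X,Y) + B(\nabla_X N, Y) + B(X, \nabla_Y N). \]
Restricting to $X,Y \in T\Sigma$, the vectors $\nabla_X N$ lie in $T\Sigma$ (Weingarten's equation) and are given by the Weingarten endomorphism $k(X)$. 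Inserting the decomposition of $\restr{B}{\Sigma}$ and using $N^\flat|_{T\Sigma}=0$, the correction terms contribute $B_0^\parallel(k(X),Y) + B_0^\parallel(X,k(Y))$, which under the sign convention fixed in the notation section equals $-(k\cdot B_0^\parallel)(X,Y)$. The pulled-back $\nabla_N B$ term is by definition $B_1^\parallel$, so $\iota^*(L_N B) = B_1^\parallel - k\cdot B_0^\parallel$. Combining the two contributions gives the claimed formula $h_0 = B_1^\parallel - \extd^\Sigma b_0 - k\cdot B_0^\parallel$.

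As an alternative, one could simply apply Lemma \ref{extdhypersurflemma} (referenced in the notation section), which packages precisely the decomposition of $\restr{\extd A}{\Sigma}$ into contributions from the parallel and normal parts of $A$ and $\nabla_N A$; the content of the present lemma then reduces to reading off the two projections of that formula for $A = B$. The only step requiring real care is the sign bookkeeping: the convention $A = A^\parallel + \varepsilon\, N^\flat \wedge A^\perp$ with $\varepsilon = -1$ in the Lorentzian case, the sign relating the Weingarten map to the second fundamental form, and the definition of the endomorphism action $k \cdot$ on $p$-forms (the factor $(-1)^p$ in the notation section) all have to match up so that the minus signs in $-\extd^\Sigma b_0$ and $-k\cdot B_0^\parallel$ come out correctly. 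Everything else is a short application of standard exterior-calculus identities.
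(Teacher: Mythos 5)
Your proposal is correct, and your primary route differs from the one the paper takes. The paper's proof is a one-line reduction: it simply observes that the claimed identity is the special case $A=B$ of Lemma \ref{extdhypersurflemma}, whose own proof is an index computation of $(p+1)\nabla_{[0}A_{m_1\dots m_p]}$ — exactly the ``alternative'' you mention at the end. Your main argument instead runs through Cartan's formula $i_N\extd B = L_NB - \extd(i_NB)$, converts the Lie derivative to the covariant derivative via torsion-freeness, and identifies the Weingarten correction $B_0^\parallel(k(X),Y)+B_0^\parallel(X,k(Y))$ with $-(k\cdot B_0^\parallel)(X,Y)$. I checked this against the paper's conventions: with $(k\cdot A^\parallel)_{i_1\dots i_p}=(-1)^pp\,k^j_{[i_1}A^\parallel_{i_2\dots i_p]j}$ and the Weingarten map $k(X)=\nabla_XN$ (as used in the proofs of Lemmas \ref{antisymmhypersurfacederlemma} and \ref{initialdata_setup_metric_phi_lemma}), one indeed gets $k^l_{i_1}(B_0^\parallel)_{li_2}+k^l_{i_2}(B_0^\parallel)_{i_1l}=-(k\cdot B_0^\parallel)_{i_1i_2}$, and likewise $\iota^*(i_NB)=b_0$ is consistent with $\varepsilon=-1$ in $A=A^\parallel+\varepsilon\,N^\flat\wedge A^\perp$; so your signs come out right. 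The trade-off is the usual one: your coordinate-free derivation makes the geometric origin of each term transparent (the $\extd^\Sigma b_0$ from the exact piece, the $k\cdot B_0^\parallel$ from the failure of $L_N$ to equal $\nabla_N$), at the cost of the Lie-derivative manipulation, whereas the paper's citation of Lemma \ref{extdhypersurflemma} is shorter but hides the computation in an index calculation elsewhere. (Incidentally, your sign $-k\cdot B_0^\parallel$ agrees with the proof of Lemma \ref{extdhypersurflemma} and with the statement of the present lemma, though not with the $+k\cdot A_0^\parallel$ appearing in the \emph{statement} of Lemma \ref{extdhypersurflemma}; that discrepancy is internal to the paper, not a defect of your argument.)
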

\begin{proof}
    One finds
    \begin{equation*}
        \begin{split}
            \extd B &= \extd^\Sigma B_0^\parallel - N^\flat\wedge\left[B_1^\parallel-\extd^\Sigma b_0 -k \cdot B_0^\parallel \right]. 
        \end{split}
    \end{equation*}
    as a special case of Lemma \ref{extdhypersurflemma}.
\end{proof}
We summarise the interpretation of this statement in the following Remark.
\begin{remark}\label{initialdatadefrem}
    In the decomposition (\ref{bfielddecompeq}), only the initial values of $B_0^\parallel$ and the difference $B_1^\parallel - \extd^\Sigma b_0$ are constrained by the initial value of $H$. Note that, for initial data induced on a spacelike hypersurface one can always achieve $b_0=0$ via a closed $B$-field transformation on $M$, cf.\ Lemma \ref{bfieldgaugelemma}. With this assumption, the initial data precisely constrains $B_0^\parallel$ and $B_1^\parallel$ - the former up to addition of a closed two-form, and then the latter uniquely (but depending on that freedom).
\end{remark}
We conclude from the preceding discussion that \emph{initial data (partially) in terms of potentials} differs from initial data in terms of fields in that one replaces $(H_0^\parallel, h_0^\parallel)$ by $(B_0^\parallel, B_1^\parallel)$ and/or $(\xi^\parallel_0,x_0)$ by $(\phi_0,\phi_1)$, demanding then that with (\ref{potentialeq}) the constraint equations (\ref{genconstrainteqs}) are satisfied.

\subsection{The Einstein Frame}\label{einsteinframe_section}

Recall that the $\nabla^2\phi$-term in (\ref{combinedgeneinsteineqs_inpotentials}) produces out-of-place second order derivatives, if we want to understand the system as being of the form (\ref{hyperbolicpdeeq}). In this section, we discuss the GEE in the Einstein frame, and show that in this frame, there are no such out-of-place derivatives. Moreover, we see that in this frame, the energy-momentum tensor is divergence-free as a consequence of the $B$-field and dilaton equation alone.

We emphasize that the Einstein frame can (for non-exact dilaton) only be obtained locally, after writing $\xi = \extd \phi$. In the context of the IVP, one should therefore imagine the SuGra spacetimes we discuss here to be a suitable small subset of the development one considers.

The Einstein frame is achieved by the conformal transformation described in the following Proposition.
\begin{proposition}\label{einsteinframeprop}
    Let $(M,g,H,\phi)$ be a SuGra spacetime of dimension $d=n+1$. Denote  $\xi = \extd \phi$. Define with $\kappa = \frac{1}{d-2}$ the Einstein frame metric as $\tilde{g} \coloneqq e^{-2 \kappa \phi} g$. Then, combined generalised Ricci and scalar flatness (\ref{combinedgeneinsteineqs}) is equivalent to 
    \begin{equation} \label{einsteinframecombinedeinsteineqs}
        \begin{split}
            \Tilde{\extd}^*H &= -\frac{4}{d-2} \tilde{\iota}_\xi H \\
            \rictilde &= \frac{1}{d-2} \left[\xi \otimes \xi - \frac{e^{-4\kappa \phi}}{6}\absolute{H}_{\tilde{g}}^2\Tilde{g}\right]+ e^{-4\kappa\phi}\frac{H^{2,\tilde{g}}}{4}  \\ 
            \tilde{\Box}\phi &= -\frac{e^{-4\kappa\phi}}{6} \absolute{H}^2_{\tilde{g}}  
        \end{split} 
    \end{equation}
    Herein, $\tilde{\iota}_\alpha H = H(\alpha^{\tilde{\sharp}})$, where the musical ismorphism $\tilde{\sharp}$ comes from the Einstein frame metric $\tilde{g}$. Furthermore, $\tilde{\Box} = \tilde{\nabla}^\mu\tilde{\nabla}_\mu$, and $H^{2,\tilde{g}} \in \mathrm{Sym}^2(M)$ denotes the symmetric two-tensor obtained by contracting with $\tilde{g}$, i.e.\ $H^{2,\tilde{g}}_{\mu\nu} = \tilde{g}^{\kappa\lambda} \tilde{g}^{\omega\pi} H_{\mu\kappa\omega} H_{\nu\lambda\pi} = e^{4\kappa\phi}H^2_{\mu\nu}$.
\end{proposition}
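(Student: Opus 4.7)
The plan is to proceed by direct computation using the standard transformation laws under a conformal change $\tilde{g} = e^{2\omega}g$ with $\omega = -\kappa\phi$. The key ingredients are: (i) the transformation of the Ricci tensor,
\[
\rictilde_{\mu\nu} = \ric_{\mu\nu} - (d-2)\nabla_\mu\nabla_\nu\omega + (d-2)\nabla_\mu\omega\,\nabla_\nu\omega - \bigl(\Box\omega + (d-2)\absolute{\nabla\omega}^2\bigr)g_{\mu\nu};
\]
(ii) the transformation of the codifferential on a $p$-form, $\Tilde{\extd}^*\alpha = e^{-2\omega}\bigl[\extd^*\alpha - (d-2p)\,i_{\nabla\omega}\alpha\bigr]$; (iii) the transformation of the wave operator on functions, $\tilde{\Box}\phi = e^{-2\omega}\bigl[\Box\phi + (d-2)\nabla^\mu\omega\,\nabla_\mu\phi\bigr]$; and (iv) the scaling laws $H^{2,\tilde{g}} = e^{4\kappa\phi}H^2$, $\absolute{H}^2_{\tilde{g}} = e^{6\kappa\phi}\absolute{H}^2$, $g_{\mu\nu} = e^{2\kappa\phi}\tilde{g}_{\mu\nu}$, and $\tilde{\iota}_\xi H = e^{2\kappa\phi}i_\xi H$, all of which follow from $\tilde{g}^{-1} = e^{2\kappa\phi}g^{-1}$.

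I would then handle the three equations in turn. For the \emph{dilaton equation}, (iii) with $\omega = -\kappa\phi$ and $(d-2)\kappa = 1$ gives $\tilde{\Box}\phi = e^{2\kappa\phi}(\Box\phi - \absolute{\xi}^2)$. Since $\xi = \extd\phi$, the string-frame dilaton equation reads $\Box\phi - \absolute{\xi}^2 = -\absolute{H}^2/6$, and (iv) converts the resulting $e^{2\kappa\phi}\absolute{H}^2$ into $e^{-4\kappa\phi}\absolute{H}^2_{\tilde{g}}$. For the \emph{$H$-field equation}, applying (ii) at $p=3$ together with $i_{\nabla\omega}H = -\kappa e^{-2\kappa\phi}\tilde{\iota}_\xi H$ yields
\[
\Tilde{\extd}^*H = e^{2\kappa\phi}\extd^*H + (d-6)\kappa\,\tilde{\iota}_\xi H.
\]
Substituting $\extd^*H = -i_\xi H = -e^{-2\kappa\phi}\tilde{\iota}_\xi H$ and simplifying $-1 + (d-6)\kappa = -4/(d-2)$ yields the claim. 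For the \emph{Ricci equation}, (i) with $\omega = -\kappa\phi$, together with $(d-2)\kappa = 1$ and $(d-2)\kappa^2 = \kappa$, reduces to
\[
\rictilde_{\mu\nu} = \ric_{\mu\nu} + \nabla_\mu\nabla_\nu\phi + \kappa\,\xi_\mu\xi_\nu + \kappa(\Box\phi - \absolute{\xi}^2)g_{\mu\nu}.
\]
The string-frame Ricci equation cancels $\nabla^2\phi$ against $-\nabla\xi$ and leaves $H^2/4$; the string-frame dilaton equation replaces the trace part $(\Box\phi - \absolute{\xi}^2)$ by $-\absolute{H}^2/6$. A final application of (iv) then assembles the Einstein frame Ricci equation. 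The reverse implication is obtained by running all substitutions backwards, since every manipulation above is an algebraic rearrangement that inverts without assumption.

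The main obstacle is purely bookkeeping: one has to track consistently several distinct powers of $e^{-2\kappa\phi}$ alongside the combinatorial factors $(d-2p)$, $(d-2)\kappa$, and $(d-2)\kappa^2$. The conceptually interesting point is that the trace term $\kappa(\Box\phi - \absolute{\xi}^2)g_{\mu\nu}$ produced by the conformal change of $\ric$ is exactly what the string-frame dilaton equation supplies, and that the matter-side scalings $H^2 \sim e^{-4\kappa\phi}H^{2,\tilde{g}}$ and $\absolute{H}^2\,g \sim e^{-4\kappa\phi}\absolute{H}^2_{\tilde{g}}\,\tilde{g}$ align to produce a single common overall factor $e^{-4\kappa\phi}$ on the matter side; this alignment is precisely why the exponent $-2\kappa = -2/(d-2)$ selects the Einstein frame.
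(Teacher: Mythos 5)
Your proposal is correct and follows essentially the same route as the paper: both apply the standard conformal transformation laws for $\ric$, $\extd^*$ on $p$-forms, and the wave operator with $\varphi=-\kappa\phi$, use the string-frame dilaton equation to replace the trace term $\Box\phi-\absolute{\xi}^2$ by $-\absolute{H}^2/6$ in the Ricci equation, and then convert contractions to the $\tilde{g}$-metric via the scalings $H^{2,\tilde{g}}=e^{4\kappa\phi}H^2$ and $\absolute{H}^2_{\tilde{g}}=e^{6\kappa\phi}\absolute{H}^2$. The only cosmetic difference is that the paper derives the transformation of $\tilde{\Box}\phi$ from the transformation law for $\tilde{\nabla}\alpha$ on one-forms rather than quoting it directly.
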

\begin{proof}
    Equivalence between (\ref{combinedgeneinsteineqs}) and (\ref{einsteinframecombinedeinsteineqs}) follows from simple calculations, utilizing the following well-known identities for conformal transformations $\tilde{g} = e^{2\varphi} g$:
    \begin{equation*}
        \begin{split}
            \rictilde &= \ric - (d-2)(\nabla^2 \varphi - \extd\varphi \otimes \extd \varphi)- (\Box \varphi + (d-2) \absolute{\extd \varphi}_g^2) g, \\
            \tilde{\nabla} \alpha &= \nabla \alpha + \scalbrack{\alpha, \extd\varphi}_g g - (\extd\varphi \otimes \alpha + \alpha \otimes \extd \varphi), \\
            e^{2\varphi} \tilde{\extd}^*\omega &= \extd^*\omega - (d-2p) i_{\extd \varphi} \omega.
        \end{split}
    \end{equation*}
    Herein, $\alpha$ denotes an arbitrary one-form, and $\omega$ a $p$-form. Inserting $\varphi = -\kappa\phi$, this becomes
    \begin{equation*}
        \begin{split}
            \rictilde &= \ric + (\nabla^2 \phi + \frac{1}{d-2} \extd\phi \otimes \extd \phi)+ \frac{1}{d-2}(\Box \phi - \absolute{\extd \phi}_g^2) g, \\
            \tilde{\nabla} \alpha &= \nabla \alpha - \frac{1}{d-2} \big[ \scalbrack{\alpha, \extd\phi}_{\tilde{g}} \tilde{g} - (\extd\phi \otimes \alpha + \alpha \otimes \extd\phi) \big], \\
            \tilde{\extd}^*\omega &= e^{2 \kappa\phi}\extd^*\omega + \frac{d-2p}{d-2} \tilde{\iota}_{\extd \phi} \omega.
        \end{split}
    \end{equation*}
    We start with the first equation.
    \begin{equation*}
        \begin{split}
            &\extd^*H = - i_\xi H \\
            \Longleftrightarrow \qquad&  e^{-2\kappa\phi}\left(\tilde{\extd}^*H - \frac{d-6}{d-2}\tilde{\iota}_{\xi}H\right) = - i_\xi H \\
            \Longleftrightarrow \qquad&  \tilde{\extd}^*H = \frac{-4}{d-2}\tilde{\iota}_{\extd \phi} H  \\
        \end{split}
    \end{equation*}
    Noting that the dilaton equation can be stated as $\Box \phi - \absolute{\extd\phi}^2_g = - \frac{\absolute{H}^2}{6}$, we continue with the second equation.
    \begin{equation*}
        \begin{split}
            &4\: \ric = H^2 - 4 \nabla \xi \\
            \Longleftrightarrow \qquad &\rictilde = \frac{1}{d-2} \left[\extd \phi \otimes \extd \phi + \left( \Box \phi - \absolute{\extd \phi}^2_g\right)g\right] + \frac{H^2}{4}  \\
            \Longleftrightarrow \qquad &\rictilde = \frac{1}{d-2} \left[\extd \phi \otimes \extd \phi - \frac{\absolute{H}^2_g}{6}\: g\right] +  \frac{H^2}{4}  \\
            \Longleftrightarrow \qquad &\rictilde = \frac{1}{d-2} \left[\xi \otimes \xi - e^{-4\kappa\phi} \frac{\absolute{H}^2_{\tilde{g}}}{6}\: \Tilde{g}\right] +  e^{-4\kappa\phi}\frac{H^{2,\tilde{g}}}{4}    \\
        \end{split}
    \end{equation*}
    Finally, we calculate
    \begin{equation*}
        \begin{split}
            \tilde{\Box}\phi & = \trwith{\tilde{g}}(\tilde{\nabla}^2\phi )  = \trwith{\tilde{g}}(\tilde{\nabla}\extd \phi ) \\
            & = \trwith{\tilde{g}}\left(\nabla^2 \phi - \frac{1}{d-2} \big[ \absolute{\extd\phi}^2_{\tilde{g}}\tilde{g} - 2\: \extd\phi \otimes \extd \phi \big]\right) \\
            & =e^{2\kappa\phi}\left(\Box\phi - \absolute{\extd\phi}^2_{g}\right) \\
            & =- e^{-4\kappa\phi}\frac{\absolute{H}^2_{\tilde{g}}}{6}.
        \end{split}
    \end{equation*}
    We are done.
\end{proof}
Let us discuss the IVP formulation of the Einstein frame GEE. The notions of \textit{Einstein frame initial data} (in fields and potentials) and \textit{Einstein frame developments} are directly analogous to the corresponding notions in the string frame developed in Definitions \ref{initialdata_stringframe_def} and \ref{development_stringframe_def}, bearing in mind that one has to work with a potential for the dilaton for the Einstein frame to be defined. It remains to develop the Einstein frame constraint equations and the transformation that relates string frame and Einstein frame initial data. We start with the latter.
\begin{lemma}\label{initialdata_einsteinframe_lemma}
    Let $(M, g,H,\phi)$, $H= \bar{H}+ \extd B$, be a SuGra spacetime with spacelike hypersurface $\Sigma$ on which the initial data $(g_0,k,H_0,h_0,\phi_0,\phi_1)$ is induced. Denote by $N$ the future-pointing unit normal on $\Sigma$. Then, after the conformal transformation $\tilde{g} = e^{-2\kappa\phi} g$, $\kappa = \frac{1}{d-2}$, (corresponding to a transformation from the string to the Einstein frame), the SuGra spacetime $(\tilde{g}, H, \phi)$ induces the initial data $(\tilde{g}_0,\tilde{k},\tilde{H}_0,\tilde{h}_0,\tilde{\phi}_0,\tilde{\phi}_1)$ given by
    \begin{equation*}
       \begin{split}
           \tilde{g}_0 &= e^{-2\kappa\phi}g_0, \qquad\quad \tilde{k} = e^{-\kappa\phi} [k - \kappa N(\phi) g_0],\\
           \tilde{H}_0 &= H_0, \qquad\qquad \tilde{h}_0 = e^{\kappa\phi}h_0 \\
           \tilde{\phi}_0&= \phi_0,\qquad\qquad\tilde{\phi}_1 = e^{\kappa\phi} \phi_1
       \end{split}
    \end{equation*}
    Furthermore, the components $B_0,B_1,b_0$ and $b_1$ of a two-form $B \in \Omega^2(M)$ in the decomposition
    \begin{equation}
        \restr{B}{\Sigma} = B_0 - N^\flat \wedge b_0, \qquad\qquad \restr{\nabla_N B}{\Sigma} = B_1 - N^\flat \wedge b_1
    \end{equation}
    transform as
    \begin{equation*}
        \begin{split}
            \tilde{B}_0 &= B_0, \qquad\qquad \tilde{B}_1 = e^{\kappa\phi}[B_1 + 2 \kappa N(\phi) B_0-\kappa b_0 \wedge \extd^\Sigma\phi],\\
            \tilde{b}_0 &= e^{\kappa\phi} b_0, \qquad\quad \tilde{b}_1 = e^{2\kappa\phi}\left[b_1 - 2\kappa N(\phi) b_0 + \kappa i_{\extd^\Sigma\phi}B_0   \right].
        \end{split}
    \end{equation*}
\end{lemma}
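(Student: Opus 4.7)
The plan is to establish each identity by direct computation from the conformal transformation $\tilde{g} = e^{-2\kappa\phi}g$. Three consequences are key, and I would record them at the outset: the future-directed unit normal rescales as $\tilde{N} = e^{\kappa\phi}N$, which follows from $\tilde{g}(e^{\kappa\phi}N,e^{\kappa\phi}N) = e^{-2\kappa\phi+2\kappa\phi}g(N,N) = -1$; the corresponding covector satisfies $\tilde{N}^{\tilde{\flat}} = \tilde{g}(\tilde{N},\cdot) = e^{-\kappa\phi}N^\flat$, equivalently $N^\flat = e^{\kappa\phi}\tilde{N}^{\tilde{\flat}}$; and the Levi-Civita connection changes according to the standard conformal formulas already recorded in the proof of Proposition \ref{einsteinframeprop}, applied with $\varphi = -\kappa\phi$.

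The identities for $\tilde{g}_0$, $\tilde{H}_0$, $\tilde{h}_0$, $\tilde{\phi}_0$, $\tilde{\phi}_1$ then follow at once from the definition of pullback together with the rescaling of $\tilde{N}$. Specifically, $\tilde{g}_0 = i^*\tilde{g} = e^{-2\kappa\phi}g_0$; the identities $\tilde{H}_0 = H_0$ and $\tilde{\phi}_0 = \phi_0$ hold because neither the pullback of $H$ nor the value of a scalar field involves the metric; $\tilde{\phi}_1 = \tilde{N}(\phi) = e^{\kappa\phi}\phi_1$; and $\tilde{h}_0 = i^*(i_{\tilde{N}}H) = e^{\kappa\phi}i^*(i_N H) = e^{\kappa\phi}h_0$.

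For the second fundamental form, I would compute $\tilde{\nabla}_X\tilde{N}$ for $X \in T\Sigma$ by combining the Leibniz rule on $\tilde{N} = e^{\kappa\phi}N$ with the conformal change formula $\tilde{\nabla}_XY = \nabla_XY - \kappa(X\phi)Y - \kappa(Y\phi)X + \kappa g(X,Y)\grad\phi$ applied to $\tilde{\nabla}_XN$. Since $g(X,N) = 0$ kills the gradient term, a short expansion cancels the $\kappa(X\phi)N$ contributions and produces $\tilde{\nabla}_X\tilde{N} = e^{\kappa\phi}[\nabla_XN - \kappa N(\phi)X]$. Pairing with $Y \in T\Sigma$ through $\tilde{g}$ then yields the stated $\tilde{k} = e^{-\kappa\phi}[k - \kappa N(\phi)g_0]$.

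The $B$-field identities $\tilde{B}_0 = B_0$ and $\tilde{b}_0 = e^{\kappa\phi}b_0$ are immediate from substituting $N^\flat = e^{\kappa\phi}\tilde{N}^{\tilde{\flat}}$ into $\restr{B}{\Sigma} = B_0 - N^\flat\wedge b_0$ and comparing with $\tilde{B}_0 - \tilde{N}^{\tilde{\flat}}\wedge\tilde{b}_0$. The formulas for $\tilde{B}_1$ and $\tilde{b}_1$ are the technically most involved step. The plan is to write the conformal change of the covariant derivative of a $(0,2)$-tensor,
\[
\tilde{\nabla}_\mu B_{\nu\rho} = \nabla_\mu B_{\nu\rho} + 2\kappa(\partial_\mu\phi)B_{\nu\rho} + \kappa(\partial_\nu\phi)B_{\mu\rho} + \kappa(\partial_\rho\phi)B_{\nu\mu} - \kappa g_{\mu\nu}B^\lambda{}_\rho\partial_\lambda\phi - \kappa g_{\mu\rho}B_\nu{}^\lambda\partial_\lambda\phi,
\]
contract with $\tilde{N}^\mu = e^{\kappa\phi}N^\mu$ in the derivative slot to obtain $\tilde{\nabla}_{\tilde{N}}B$, restrict to $\Sigma$, and extract $\tilde{B}_1$ by pulling back to $T\Sigma$ and $\tilde{b}_1$ by further contraction with $\tilde{N}^\nu$. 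The main obstacle is handling the trace terms such as $B^\lambda{}_\rho\partial_\lambda\phi$, which require decomposing the inverse metric as $g^{-1} = g_\Sigma^{-1} - N\otimes N$ and using the identity $i^*(i_NB) = b_0$ to separate a $T\Sigma$-contribution $i_{\extd^\Sigma\phi}B_0$ from a normal contribution proportional to $N(\phi)b_0$. Carefully tracking these splittings, together with the Lorentzian sign convention $\varepsilon = -1$, yields the claimed formulas; everything else reduces to routine index bookkeeping.
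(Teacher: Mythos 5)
Your proposal is correct and follows essentially the same route as the paper's own proof: record $\tilde{N} = e^{\kappa\phi}N$, apply the standard conformal-change formulas for the Levi-Civita connection, and then contract and restrict to $\Sigma$; the paper merely phrases the $\tilde{k}$-computation in terms of the one-form $N^\flat$ and writes down the already-contracted expression for $\tilde{\nabla}_{\tilde{N}}B$ instead of the full $(0,2)$-tensor formula, which are cosmetic differences. One caveat: carrying your computation through gives $\tilde{b}_1 = e^{2\kappa\phi}\left[b_1 + 2\kappa N(\phi)\, b_0 + \kappa\, i_{\extd^\Sigma\phi}B_0\right]$, which agrees with the final line of the paper's proof but has the opposite sign on the $N(\phi)\,b_0$ term from the lemma statement --- that discrepancy is a typo in the paper, not a flaw in your approach.
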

\begin{proof}
    The formulas for $\tilde{g}_0, \tilde{H}_0$, and $\tilde{\phi}_0$ are obvious. Denote by $N$ and $\tilde{N}$ the future-pointing unit normal on $\Sigma$ with respect to $g$ and $\tilde{g}$, respectively. Then $\tilde{N} = e^{\kappa\phi}N$. The formulas for $\tilde{h}_0$ and $\tilde{\phi}_1$ follow trivially. Furthermore,
    \begin{equation*}
        \begin{split}
            \tilde{k} &= [\tilde{\nabla} \tilde{g}\tilde{N}]^\parallel = [\tilde{\nabla}(e^{-\kappa \phi}gN)]^\parallel = e^{-\kappa \phi} \restr{\left[\tilde{\nabla} gN -\kappa \extd\phi \otimes gN \right]}{T\Sigma} \\
            &= e^{-\kappa \phi}\restr{\left[\nabla gN -\kappa \scalbrack{gN,\extd\phi}_g g + \kappa (\extd\phi\otimes gN + gN \otimes \extd\phi)  -\kappa \extd\phi \otimes gN  \right]}{T\Sigma} \\
            &= e^{-\kappa \phi} \left[k - \kappa N(\phi)g_0 \right].
        \end{split}
    \end{equation*}
    Let us now discuss how the $B$-field components transform. Again, trivially $\tilde{B}_0 = B_0$ and $\tilde{b}_0 = e^{\kappa \phi}b_0$. Also,
    \begin{equation*}
        \begin{split}
            \tilde{\nabla}_{\tilde{N}}B &= e^{\kappa \phi}\tilde{\nabla}_N B =e^{\kappa \phi} \left[\nabla_N B + 2\kappa N(\phi) B - \kappa (i_N B) \wedge \extd \phi - \kappa (gN) \wedge (i_{\extd\phi} B)\right].\\
        \end{split}
    \end{equation*}
    It follows that
    \begin{equation*}
    \begin{split}
        \tilde{B}_1 &= \restr{\tilde{\nabla}_{\tilde{N}}B}{T\Sigma} = e^{\kappa \phi}[B_1 + 2\kappa N(\phi) B_0 -\kappa b_0 \wedge \extd^\Sigma\phi], \\
        \tilde{b}_1 &= [\tilde{\nabla}_{\tilde{N}} B](\tilde{N}) =e^{+2\kappa \phi}\left[b_1 + 3\kappa N(\phi) b_0 + \kappa i_{\extd\phi}B - \kappa b_0(\grad_g \phi) gN  \right] \\
        &=e^{2\kappa\phi}\left[b_1 + 2 \kappa N(\phi) b_0 + \kappa i_{\extd^\Sigma\phi}B_0 \right], \\
        \end{split}
    \end{equation*}
    as claimed.
\end{proof}
Next, we compute the Einstein frame energy-momentum tensor and check that it is indeed divergence free as a consequence of the $B$-field and dilaton equation alone. For the rest of this section, we drop the tilde over the conformally transformed metric for ease of notation. 
\begin{proposition}\label{emtensorprop}
    Let $(M,g,H,\phi), \xi = \extd \phi,$ be a SuGra spacetime satisfying the Einstein frame GEE (\ref{einsteinframecombinedeinsteineqs}). Then, the energy-momentum tensor $T = \ric - \frac{\rscal}{2} g$ is given by
    \begin{equation*}
        T = \frac{1}{d-2}\left[\xi \otimes \xi - \frac{\absolute{\xi}^2}{2} g\right]  + \frac{e^{-4\kappa \phi}}{4}\big[H^2  -\frac{\absolute{H}^2}{6} \: g\big]\\
    \end{equation*}
    Furthermore, assuming only the $H$-field and dilaton equation in (\ref{einsteinframecombinedeinsteineqs}), the energy-momentum tensor is divergence-free.
\end{proposition}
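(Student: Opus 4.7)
The proposition has two parts: an explicit formula for $T = \ric - \tfrac{\rscal}{2}g$, and the vanishing of $\nabla^\mu T_{\mu\nu}$ assuming only the $H$-field and dilaton equations from \eqref{einsteinframecombinedeinsteineqs}. For the first part, I would trace the Einstein frame Ricci equation with respect to $g$ (the tilde is dropped in this section) to read off $\rscal$, then substitute into $\ric - \tfrac{\rscal}{2}g$. In the paper's convention where $\trwith{g} H^2 = \absolute{H}^2$, the resulting linear combination rearranges directly into the stated formula; no structural input beyond the Ricci equation is needed.

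For the second part, the plan is to differentiate $T$ term by term. The scalar sector $\tfrac{1}{d-2}[\xi \otimes \xi - \tfrac{1}{2}\absolute{\xi}^2 g]$ is the easy half: since $\xi = \extd\phi$, the Hessian $\nabla\xi$ is symmetric, so $\xi^\mu \nabla_\mu \xi_\nu = \tfrac{1}{2}\nabla_\nu \absolute{\xi}^2$ cancels the $-\tfrac{1}{2}\nabla_\nu\absolute{\xi}^2$ coming from the trace piece, and only $\tfrac{1}{d-2}(\Box\phi)\xi_\nu$ survives.

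The $H$-sector is the substantive step. Applying the product rule to $\tfrac{e^{-4\kappa\phi}}{4}(H^2_{\mu\nu} - \tfrac{\absolute{H}^2}{6}g_{\mu\nu})$, the derivative of the conformal factor produces terms proportional to $\xi^\mu H^2_{\mu\nu}$ and to $\absolute{H}^2 \xi_\nu$, while the derivative of the bracketed tensor gives $\nabla^\mu H^2_{\mu\nu} - \tfrac{1}{6}\nabla_\nu\absolute{H}^2$. I would split $\nabla^\mu H^2_{\mu\nu} = (\nabla^\mu H_{\mu\kappa\rho})H_\nu{}^{\kappa\rho} + H^{\mu\kappa\rho}\nabla_\mu H_{\nu\kappa\rho}$. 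The first summand becomes $\tfrac{4}{d-2}\xi^\sigma H^2_{\sigma\nu}$ via the $H$-field equation $\extd^*H = -\tfrac{4}{d-2}i_\xi H$, cancelling the exponential-derivative term precisely because $\kappa = \tfrac{1}{d-2}$. The second summand is handled by the Bianchi identity $\nabla_{[\mu}H_{\nu\kappa\rho]} = 0$: contracting with $H^{\mu\kappa\rho}$ and using total antisymmetry yields $3 H^{\mu\kappa\rho}\nabla_\mu H_{\nu\kappa\rho} = H^{\mu\kappa\rho}\nabla_\nu H_{\mu\kappa\rho} = \tfrac{1}{2}\nabla_\nu\absolute{H}^2$, so that $H^{\mu\kappa\rho}\nabla_\mu H_{\nu\kappa\rho} = \tfrac{1}{6}\nabla_\nu\absolute{H}^2$ cancels the trace contribution.

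After these cancellations only $\tfrac{1}{d-2}(\Box\phi)\xi_\nu$ and a residual $\tfrac{\kappa}{6} e^{-4\kappa\phi}\absolute{H}^2 \xi_\nu$ remain, and the dilaton equation $\Box\phi = -\tfrac{e^{-4\kappa\phi}}{6}\absolute{H}^2$ makes them add to zero. The main obstacle is the identification $H^{\mu\kappa\rho}\nabla_\mu H_{\nu\kappa\rho} = \tfrac{1}{6}\nabla_\nu\absolute{H}^2$ via the Bianchi identity; everything else is bookkeeping, but the bookkeeping is delicate because each cancellation depends on the conformal exponent being fixed by $\kappa = \tfrac{1}{d-2}$. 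This is, at bottom, the computational reason the Einstein frame is singled out for off-shell conservation of $T$.
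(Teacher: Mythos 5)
Your proposal is correct and follows essentially the same route as the paper: the formula for $T$ is obtained by tracing the Einstein frame Ricci equation, and the divergence is computed term by term, using closedness of $\xi$ in the scalar sector, the $H$-field equation to cancel the term coming from differentiating $e^{-4\kappa\phi}$ against $H^2$, closedness of $H$ to identify $H^{\mu\lambda\kappa}\nabla_\mu H_{\nu\lambda\kappa}$ with $\tfrac{1}{6}\nabla_\nu\absolute{H}^2$, and the dilaton equation for the final cancellation. The only difference is cosmetic ordering of when the matter equations are inserted; the paper substitutes the dilaton equation first and the $H$-field equation second, but the cancellations are identical.
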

\begin{proof}
    We calculate
    \begin{equation*}
        \begin{split}
            T &= \frac{1}{d-2}\big[\xi \otimes \xi -\frac{1}{6} e^{-4\kappa \phi}\absolute{H}^2_g \: g\big]  + e^{-4\kappa \phi}\frac{H^2}{4} \\
            &\quad - \frac{1}{2(d-2)} g\: \tr\big[\xi \otimes \xi -\frac{1}{6} e^{-4\kappa \phi}\absolute{H}^2_g \: g\big] - \frac{e^{-4\kappa \phi}}{8}\: g\: \tr(H^2)\\
            &= \frac{1}{d-2}\big[\xi \otimes \xi -\frac{1}{6} e^{-4\kappa \phi}\absolute{H}^2_g \: g\big]  + e^{-4\kappa \phi} \frac{H^2}{4}  \\
            &\quad - \frac{1}{2(d-2)} g\: \big[\absolute{\xi}^2 -\frac{d}{6} e^{-4\kappa \phi} \absolute{H}^2\big] - \frac{1}{8}\:g\: e^{-4\kappa \phi}\absolute{H}^2\\
            &= \frac{1}{d-2}\left[\xi \otimes \xi - \frac{\absolute{\xi}^2}{2} g \right]  +e^{-4\kappa \phi} \frac{H^2}{4} +\left(\frac{d-2}{12(d-2)}- \frac{1}{8}\right)e^{-4\kappa \phi} \absolute{H}^2 \: g\\
            &= \frac{1}{d-2}\left[\xi \otimes \xi - \frac{\absolute{\xi}^2}{2} g \right]   + e^{-4\kappa \phi}\frac{H^2}{4}  -\frac{1}{24}e^{-4\kappa \phi} \absolute{H}^2 \: g.\\
        \end{split}
    \end{equation*}
    Furthermore,
    \begin{equation*}
        \begin{split}
            &\nabla^\mu T_{\mu\nu} \\ 
            &= \frac{1}{2(d-2)}\nabla^\mu\left[ 2\xi_\mu\xi_\nu- \xi^\lambda\xi_\lambda g_{\mu\nu}\right]  \\
            &\quad + \frac{1}{4}\nabla^\mu (e^{-4\kappa \phi} H_{\mu}^{\ \lambda\kappa} H_{\nu\lambda \kappa}) - \frac{1}{24} \nabla_\nu(e^{-4\kappa \phi}\absolute{H}^2) \\
            &= \frac{1}{2(d-2)}\Big[2(\nabla^\mu\xi_\mu)\xi_\nu \underbrace{+ 2 \xi_\mu(\nabla^\mu\xi_\nu)- 2(\nabla_\nu\xi^\lambda)\xi_\lambda}_{=0}\Big]  \\
            &\quad+ \frac{e^{-4\kappa \phi}}{4} \Big\{ -4\kappa(\nabla^\mu\phi) H_{\mu}^{\ \lambda\kappa} H_{\nu\lambda \kappa} - \extd^*H^{\lambda\kappa}H_{\nu\lambda\kappa} +H_\mu^{\ \lambda \kappa}\: \nabla^\mu H_{\nu\lambda\kappa} \\
            &\quad \frac{2\kappa}{3} (\nabla_\nu\phi)\absolute{H}^2 - \frac{1}{6} \nabla_\nu \absolute{H}^2 \Big\}\\
            &\stackrel{(D)}{=} \frac{e^{-4\kappa \phi}}{6(d-2)} \left\{-\absolute{H}^2 \: \xi_\nu -6 \xi^\mu H_{\mu}^{\ \lambda\kappa} H_{\nu\lambda \kappa} + \xi_\nu\absolute{H}^2\right\} \\
            &\quad+ \frac{e^{-4\kappa \phi}}{4} \left\{- \extd^*H^{\lambda\kappa}H_{\nu\lambda\kappa}+H_\mu^{\ \lambda \kappa}\: \nabla^\mu H_{\nu\lambda\kappa}  - \frac{1}{6} \nabla_\nu \absolute{H}^2 \right\}\\
            &\stackrel{(H)}{=} -\frac{e^{-4\kappa \phi}}{d-2} \xi^\mu H_{\mu}^{\ \lambda\kappa} H_{\nu\lambda \kappa} \\
            &\quad+ \frac{e^{-4\kappa \phi}}{4} \left\{\frac{4}{d-2} \xi^\mu H_\mu^{\ \lambda\kappa}H_{\nu\lambda\kappa} +H^{\mu \lambda \kappa} \nabla_\mu H_{\nu\lambda\kappa}  - \frac{1}{3} H^{\mu\lambda\kappa}(\nabla_\nu H_{\mu\lambda\kappa}) \right\}\\
            &=\frac{e^{-4\kappa \phi}}{4} \left\{ H^{\mu \lambda \kappa}\: \nabla_\mu H_{\nu\lambda\kappa} - \frac{1}{3} H^{\mu \lambda \kappa}\extd H_{\nu\mu\lambda\kappa} -  H^{\mu \lambda \kappa} \nabla_{[\mu} H_{\lambda\kappa]\nu} \right\}\\
            &= 0.\\
        \end{split}
    \end{equation*}
    The equality at the underbrace employs closedness of $\xi$. By (H) and (D) we denote that we made use of the $H$-field and dilaton equation respectively. The last equality employs closedness of $H$.
\end{proof}
Finally, for completeness, we determine the Einstein frame constraint equations.
\begin{proposition}
    The Einstein frame constraint equations are given by 
    \begin{equation*}
        \begin{split}
             \rscal_\Sigma + (\tr k)^2- \absolute{k}^2 &= \frac{1}{d-2}\left[\absolute{\extd^\Sigma \phi_0}^2+\phi_1^2 \right]  + \frac{e^{-4\kappa \phi_0}}{12}\left[\absolute{H_0}^2 + 3\absolute{h_0}^2 \right],\\
             \divergence_\Sigma k - \tr k &= \frac{1}{d-2} \phi_1 \extd\phi_0  + \frac{e^{-4\kappa \phi_0}}{4} C(h_0,H_0),\\
             0 &=  \left[(\extd^\Sigma)^* + \frac{4}{d-2}i_{\extd^\Sigma \phi_0}\right] h_0.
        \end{split}
    \end{equation*}
    Herein, $C(h_0,H_0)$ is the one-form obtained from contraction of $h_0$ and $H_0^\parallel$ as described below (\ref{genconstrainteqs}). 
\end{proposition}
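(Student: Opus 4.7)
The plan is to derive the three Einstein frame constraint equations by projecting the Einstein frame GEE (\ref{einsteinframecombinedeinsteineqs}) onto the initial hypersurface $\Sigma$, in direct analogy to how the string frame constraints (\ref{genconstrainteqs}) arise. An equivalent route would be to substitute the frame-change formulas of Lemma \ref{initialdata_einsteinframe_lemma} directly into (\ref{genconstrainteqs}) and rearrange; the two derivations agree by Proposition \ref{einsteinframeprop}, since the two systems of GEE are equivalent and so are their constraint sets.

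For the first two equations I would apply the contracted Gauss equation $2\,G(N,N) = \rscal_\Sigma + (\tr k)^2 - \absolute{k}^2$ and the contracted Codazzi equation $\ric(N,X) = \extd^\Sigma(\tr k)(X) - (\divergence_\Sigma k)(X)$ for tangent $X \in T\Sigma$, together with the explicit form of the Einstein tensor $G = \ric - \tfrac{1}{2}\rscal\,g$ computed in Proposition \ref{emtensorprop}. Evaluating $T(N,N)$ and $T(N,X)$ on $\Sigma$ requires only the hypersurface decomposition $H = H_0 + \varepsilon N^\flat \wedge h_0$ (with $\varepsilon = g(N,N) = -1$) in order to express $\absolute{H}^2_g$, $H^{2,g}(N,N)$ and $H^{2,g}(N,X)$ in terms of $H_0$ and $h_0$. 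The dilaton contributions are immediate from $\xi = \extd\phi$, producing the terms $\tfrac{1}{d-2}\big[\absolute{\extd^\Sigma \phi_0}^2 + \phi_1^2\big]$ and $\tfrac{1}{d-2}\phi_1 \extd^\Sigma \phi_0$ in the Hamiltonian and momentum constraint respectively.

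For the third equation I would restrict the Einstein frame $H$-field equation $\extd^* H = -\tfrac{4}{d-2} i_\xi H$ to tangential directions on $\Sigma$. The tangential component of $\extd^* H$ reduces to $(\extd^\Sigma)^* h_0$ --- the three-form analogue of the codifferential hypersurface decomposition alluded to around Theorem \ref{coexact_wave_eqs_main_thm} --- while the tangential part of $i_\xi H$ is $i_{\extd^\Sigma \phi_0} h_0$, yielding exactly the claimed constraint.

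The main obstacle is combinatorial rather than conceptual: one has to carefully track how $H^{2,g}$ and $\absolute{H}^2_g$ decompose into $(H_0, h_0)$-contributions, keeping the signs from $\varepsilon = -1$ straight. In particular, verifying that the mixed-index piece of $H^{2,g}(N,X)$ reproduces the contraction $C(h_0, H_0)$ defined below (\ref{genconstrainteqs}) is the one step that genuinely requires unpacking definitions; the remaining manipulations are routine given Proposition \ref{emtensorprop} and the hypersurface decomposition machinery already in place.
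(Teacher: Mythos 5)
Your proposal follows essentially the same route as the paper: the first two constraints are obtained from $G(N,N)=T(N,N)$ and $G(N,X)=T(N,X)$ via the contracted Gauss and Codazzi identities together with the expression for $T$ from Proposition \ref{emtensorprop}, and the third from the $H$-field equation; the paper likewise notes (and discards as less efficient) the alternative of pushing the string-frame constraints through Lemma \ref{initialdata_einsteinframe_lemma}. The one slip is in your description of the third constraint: it arises from the \emph{normal} component $i_N\extd^*H = -\tfrac{4}{d-2}\, i_N i_\xi H$ of the $H$-field equation, not from its restriction to tangential directions --- the tangential part of that two-form equation involves the normal derivative of $H$ and is therefore evolution rather than constraint data --- although the formulas you quote, $(\extd^\Sigma)^*h_0$ and $i_{\extd^\Sigma\phi_0}h_0$, are precisely the ones produced by the normal component via Lemma \ref{extdhypersurflemma}.
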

\begin{proof}
    It is more efficient to obtain these equations from the Einstein frame GEE (\ref{einsteinframecombinedeinsteineqs}) directly than to apply the transformation rules for the initial data developed in Lemma \ref{initialdata_einsteinframe_lemma} to the string frame constraint equations (\ref{genconstrainteqs}). 

    The first constraint equation in (\ref{genconstrainteqs}) is equivalent\footnote{Note that the string frame equation $G[g] = T[g,B,\phi]$ and the Einstein frame equation $G[\tilde{g}] = \tilde{T}[\tilde{g},B,\phi]$ are component-wise equivalent, as $\tilde{T}$ is defined by the equation $G[\tilde{g}] - G[g] = \tilde{T}[\tilde{g},B,\phi] - T[g,B,\phi]$. Assuming the dilaton equation, we have computed an equivalent expression for $\tilde{T}$ in Proposition \ref{emtensorprop}.} to $G(N,N) = T(N,N)$, employing that $G(N,N) = \frac{1}{2}[\rscal_\Sigma + (\tr k)^2- \absolute{k}^2]$. Noting that
    \begin{equation*}
        \begin{split}
            T(N,N) &= \frac{1}{d-2}\left[\xi \otimes \xi - \frac{\absolute{\xi}^2}{2} g\right](N,N)  + \frac{e^{-4\kappa \phi}}{4}\big[H^2  -\frac{\absolute{H}^2}{6} \: g\big](N,N)\\
            &= \frac{1}{d-2}\left[(\phi_1)^2 + \frac{\absolute{\extd^\Sigma \phi_0}^2-\phi_1^2}{2} \right]  + \frac{e^{-4\kappa \phi_0}}{4}\left[\absolute{h_0}^2  +\frac{\absolute{H_0}^2 - 3\absolute{h_0}^2}{6} \right]\\
            &= \frac{1}{2(d-2)}\left[\absolute{\extd^\Sigma \phi_0}^2+\phi_1^2 \right]  + \frac{e^{-4\kappa \phi_0}}{24}\left[\absolute{H_0}^2 + 3\absolute{h_0}^2 \right],
        \end{split}
    \end{equation*}
    the first Einstein frame constraint equation follows. Similarly, we note that the second constraint in (\ref{genconstrainteqs}) is equivalent to $G(N,X) = T(N,X)$ for $X \in \Gamma(T\Sigma)$, employing that $G(N,X) = (\divergence_\Sigma k)(X) - X(\tr k)$. We note that
    \begin{equation*}
        \begin{split}
            T(N,X) &=  \frac{1}{d-2}\left[\xi \otimes \xi - \frac{\absolute{\xi}^2}{2} g\right](N,X)  + \frac{e^{-4\kappa \phi}}{4}\big[H^2  -\frac{\absolute{H}^2}{6} \: g\big](N,X)\\
            &=  \frac{1}{d-2} \phi_1 \extd\phi_0(X)  + \frac{e^{-4\kappa \phi_0}}{4} C(h_0,H_0)(X)\\
        \end{split}
    \end{equation*}
    and obtain the second constraint equation. Finally, the third string-frame constraint in (\ref{genconstrainteqs}) is equivalent to $i_N \extd^* H = - \frac{4}{d-2}i_Ni_\xi H$ (compare this to the first equation in Proposition \ref{einsteinframeprop}). With Lemma \ref{extdhypersurflemma}, we obtain the constraint equation
    \begin{equation*}
        -(\extd^\Sigma)^*h_0 = \frac{4}{d-2}i_{\extd^\Sigma \phi_0} h_0.
    \end{equation*}
\end{proof}

\subsection{Gauge Conditions and Hyperbolic Reduction}\label{gaugeconditions_section}
Throughout this section, let $(M,g,H,\phi), \xi = \extd \phi,$ be a SuGra spacetime solving the Einstein frame GEE (\ref{einsteinframecombinedeinsteineqs}). Note that, for ease of notation, we drop the tilde in this section. We write $H = \Bar{H} + \extd B$ for some closed $\bar{H} \in \Omega^3_{\mathrm{cl}}(M)$ and a two-form $B \in \Omega^2(M)$. Again, we emphasize that in general, the Einstein frame can only be adopted locally after choosing a potential for the dilaton, $\xi = \extd \phi$, and one should therefore imagine the SuGra spacetime we consider here to be a small subset of some development.

Crucial for the well-posedness of the Cauchy problem is the well-behavedness of the equations of motion (\ref{einsteinframecombinedeinsteineqs}) for $g$, $B$ and $\phi$. As explained before, we want to understand them as being of the form (\ref{hyperbolicpdeeq}). In this section, we develop gauge conditions that achieve this.

The GEE are invariant under generalised diffeomorphisms (a combination of diffeomorphisms and $B$-field transformations). Implementing a gauge condition means breaking this invariance. To that end, we introduce a background metric\footnote{It is more precise to say that we introduce a background connection $\Bar{\nabla}$, as the metric itself will not appear in the gauge condition. However, we can assume this connection to be the Levi-Civita connection of a metric.}. (Conceptually, one should think of fixing the background field $\bar{H}$ only after breaking diffeomorphism invariance with the DeTurck gauge.) For now, it suffices to take any Lorentzian metric $\Bar{g}$ on $M = \reals \times \Sigma$. Define 
\begin{equation*}
    A^\mu_{\alpha\beta} \coloneqq (\nabla - \Bar{\nabla})^\mu_{\alpha\beta} \in \Omega^1(\End \:TM)
\end{equation*}
where $\nabla$ and $\Bar{\nabla}$ denote the Levi Civita connection of $g$ and $\Bar{g}$ respectively. Notice that, in local coordinates
\begin{equation*}
    A^\mu_{\alpha\beta} = \Gamma^\mu_{\alpha\beta} - \Bar{\Gamma}^\mu_{\alpha\beta}
\end{equation*}
Denoting
\begin{equation*}
    \Gamma_\mu \coloneqq g_{\mu\nu}g^{\alpha\beta} \Gamma^\nu_{\alpha\beta}, \qquad\quad F_\mu \coloneqq g_{\mu\nu} g^{\alpha\beta} \Bar{\Gamma}^\nu_{\alpha\beta}
\end{equation*}
one can check that
\begin{equation*}
    \ric_{\mu\nu} = -\frac{1}{2} g^{\alpha\beta} \partial_\alpha\partial_\beta g_{\mu\nu} + \partial_{(\mu} \Gamma_{\nu)} + \text{lower order}
\end{equation*}
It is now apparent how to modify the Ricci tensor. Following \cite[Chapter 14.1]{cauchy}, we define $\mathcal{D}_\nu \in \Gamma(T^*M)$ as
\begin{equation}\label{ddefeq}
    \mathcal{D}_\nu \coloneqq -g_{\nu\mu} g^{\alpha\beta} A^\mu_{\alpha\beta} = F_\nu - \Gamma_\nu 
\end{equation}
and then
\begin{equation*}
    \richat_{\mu\nu} = \ric_{\mu\nu} + \nabla_{(\mu} \mathcal{D}_{\nu)} = \ric_{\mu\nu} + \frac{1}{2} L_\mathcal{D} g_{\mu\nu},
\end{equation*}
where $L_{\mathcal{D}}$ denotes the Lie derivative in direction of the vector field $g^{\mu\nu}\mathcal{D}_\nu$. Citing \cite[eq. (14.3)]{cauchy}, we see that
\begin{equation}\label{richateq}
    \richat_{\mu\nu} = - \frac{1}{2} g^{\alpha\beta} \partial_\alpha \partial_\beta g_{\mu\nu} + \nabla_{(\mu} F_{\nu)} + g^{\alpha\beta} g^{\gamma\delta}[\Gamma_{\alpha\gamma\mu}\Gamma_{\beta\delta\nu} + \Gamma_{\alpha\gamma\mu}\Gamma_{\beta\nu\delta} + \Gamma_{\alpha\gamma\nu}\Gamma_{\beta\mu\delta}]
\end{equation}
where $\Gamma_{\alpha\beta\gamma} \coloneqq g_{\beta\mu} \Gamma^\mu_{\alpha\gamma}$, i.e.\ $\richat$ is indeed a hyperbolic differential operator of the form $g^{\mu\nu}\partial_\mu\partial_\nu + $lower order. 
\begin{remark}\label{deturck_gauge_rem}
    The gauge condition that we impose in this formalism is $\mathcal{D} = 0$, in which case $\richat = \ric$.  To highlight the dependence of $\mathcal{D}$ on the dynamical metric $g$ and the background metric $\bar{g}$, we denote $\mathcal{D} = \mathcal{D}[g,\bar{g}]$. Then, of course we have the diffeomorphism invariance $f^*(\mathcal{D}[g,\bar{g}]) = \mathcal{D}[f^*g, f^*\bar{g}]$. However, viewing the background metric as fixed, we also denote $\mathcal{D}[g] = \mathcal{D}[g,\bar{g}]$. Then, the condition $\mathcal{D} = 0$ breaks diffeomorphism invariance, because in general 
    \[f^*(\mathcal{D}[g]) = \mathcal{D}[f^*g,f^*\bar{g}] \neq \mathcal{D}[f^*g,\bar{g}] = \mathcal{D}[f^*g]. \]
\end{remark}

Inspired by how we modified the Ricci-tensor, we take a look at the operator $\extd^*\extd$. Acting on $B$, and summarising all terms which do not contain second derivatives of $g_{\mu\nu}$ or $B_{\mu\nu}$ under the term \enquote{lower order}, we get
\begin{equation*}
    \begin{split}
        \extd^* \extd B_{\mu\nu} &= -3 \nabla^\lambda \nabla_{[\lambda} B_{\mu\nu]} \\
        &=- 3 \nabla^\lambda \partial_{[\lambda} B_{\mu\nu]} \\
        &=-3 \partial^\lambda \partial_{[\lambda} B_{\mu\nu]} + \text{lower order} \\
        &=- \partial^\lambda \partial_\lambda B_{\mu\nu}- 2 \partial^\lambda \partial_{[\mu} B_{\nu]\lambda} + \text{lower order}\\
        &= - \partial^\lambda\partial_\lambda B_{\mu\nu} - 2 \partial_{[\mu} (\partial^\lambda B_{\nu]\lambda}) + \text{lower order}.\\
    \end{split}
\end{equation*}
We denoted $\partial^\mu = g^{\mu\nu} \partial_\nu$. 

The second term contains precisely those second derivatives of the $B$-field that are problematic, as they cause the operator to not be of the form (\ref{hyperbolicpdeeq}). Notice that these terms equal 
\begin{equation*}
    \extd\extd^*B = 2 \nabla_{[\mu} \nabla^\lambda B_{\nu]\lambda}    
\end{equation*}
up to second order derivatives of the metric. These, however, we may not add to our equation, as this would only result in the equation differing from the desired form (\ref{hyperbolicpdeeq}) in a different way. Luckily, we have another metric at our disposal: $\Bar{g}$. And we don't care about its second derivatives entering into our equation. Thus, we identify as the quantity by which we want to modify the $B$-field equation
\begin{equation}\label{cdefeq}
    \begin{split}
        \mathcal{C}_\nu &\coloneqq \extd^*_{g,\Bar{g}} B_\nu \coloneqq -g^{\kappa\lambda}\Bar{\nabla}_\kappa B_{\lambda\nu} \in \Gamma(T^*M)\\
        & = \extd^* B_\nu + \mathcal{D}^\mu B_{\mu\nu} + g^{\kappa\lambda} A^\mu_{\kappa \nu} B_{\lambda \mu}.
    \end{split}
\end{equation}
To highlight the dependence of $\mathcal{C}$ on $g$, $B$, and $\bar{g}$, we sometimes write $\mathcal{C} = \mathcal{C}[g,B]$ or even $\mathcal{C} = \mathcal{C}[g,B,\bar{g}]$. 

Finally, by replacing in (\ref{einsteinframecombinedeinsteineqs})
\begin{equation*}
    \begin{split}
         \ric &\quad \longrightarrow \quad  \richat,  \\
         \extd^*\extd B &\quad \longrightarrow \quad \extd^*\extd B + \extd\mathcal{C} \eqqcolon -\hat{\Box}_{\mathrm{Hd}} B,
    \end{split}
\end{equation*}
we obtain the modified system
\begin{equation}\label{modifiedcombinedeinsteineqs}
    \begin{split}
        \hat{\Box}_{\mathrm{Hd}}B &= \extd^*\Bar{H} +\frac{4}{d-2} i_\xi H \\
        \richat &= \frac{1}{d-2} \left[\xi \otimes \xi - \frac{e^{-4\kappa \phi}}{6}\absolute{H}_{g}^2\: g \right]+ e^{-4\kappa \phi}\frac{H^{2,g}}{4}  \\ 
        \Box\phi &= -\frac{e^{-4\kappa \phi}}{6} \absolute{H}^2_{g}  
    \end{split}
\end{equation}
where $H = \Bar{H} + \extd B$, $\xi = \extd \phi$. 
\begin{proposition}\label{localformprop}
    The modified Einstein frame GEE (\ref{modifiedcombinedeinsteineqs}) form a quasi-linear hyperbolic system with principal symbol given by $g^{\mu\nu}$. Furthermore, writing in local coordinates the system as $g^{\mu\nu}\partial_\mu \partial_\nu - f[u] = 0$ where $u = (g_{\mu\nu},B_{\mu\nu},\phi)$, we see the non-linearity $f[u](t,x)$ to vanish if the following expressions are all set to vanish at $(t,x)$:
    \begin{itemize}
        \item $\partial_\alpha g_{\mu\nu}$, $B_{\mu\nu}$, $\partial_\alpha B_{\mu\nu}$, $\phi$, and $\partial_\alpha \phi$,
        \item $\bar{H}_{\mu\nu\lambda}$, $ \partial_\alpha\bar{H}_{\mu\nu\lambda}$, $\partial_\alpha\bar{g}_{\mu\nu}$, and $\partial_\alpha\partial_\beta\bar{g}_{\mu\nu}$. 
    \end{itemize}
    In particular, the modified GEE (\ref{modifiedcombinedeinsteineqs}) are of the form (\ref{localexistenceproof_hyperbolicpdeeq}) required for the local existence and uniqueness result Proposition \ref{localexistenceprop_preliminaries} to apply.
\end{proposition}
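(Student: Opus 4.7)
The plan is to treat each of the three equations in (\ref{modifiedcombinedeinsteineqs}) separately, identify in each case the principal part as $g^{\mu\nu}\partial_\mu\partial_\nu$ acting on the relevant dynamical variable, and then show that the remaining ``lower order'' contributions vanish under the stated conditions on the partial derivatives of the dynamical and background data. For the Ricci equation, the work is essentially already done: formula (\ref{richateq}) exhibits $\richat_{\mu\nu}$ as $-\tfrac{1}{2}g^{\alpha\beta}\partial_\alpha\partial_\beta g_{\mu\nu}$ plus terms that are either quadratic in $\Gamma_{\alpha\beta\gamma}$ (hence built from $g$, $g^{-1}$, and $\partial g$) or part of $\nabla_{(\mu}F_{\nu)}$ (depending on $g$, $g^{-1}$, $\partial g$, $\partial\bar g$, and $\partial\partial\bar g$). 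The right-hand side of the Ricci equation is algebraic in $g$, $\phi$, $\xi=\extd\phi$, and $H=\bar H+\extd B$, so it involves no second derivatives of the dynamical fields at all. Each of these expressions vanishes at a point when the tensors in the proposition's bullet list vanish there.

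The key step is the $B$-field equation, where one must verify that the modification of $\extd^*\extd$ by $\extd\extd^*_{g,\bar g}$ really does kill the off-diagonal second-order derivatives. I would proceed in two halves. The first is the identity
\[\extd^*\extd B_{\mu\nu}=-g^{\alpha\beta}\partial_\alpha\partial_\beta B_{\mu\nu}-2\partial_{[\mu}\bigl(g^{\alpha\beta}\partial_\beta B_{\nu]\alpha}\bigr)+\text{l.o.t.},\]
already presented immediately before (\ref{cdefeq}). The second is the analogous expansion
\[(\extd\extd^*_{g,\bar g}B)_{\mu\nu}=-2\partial_{[\mu}\bigl(g^{\kappa\lambda}\partial_\kappa B_{\lambda|\nu]}\bigr)+\text{l.o.t.},\]
where the crucial point is that $\extd^*_{g,\bar g}B_\nu=-g^{\kappa\lambda}\bar\nabla_\kappa B_{\lambda\nu}$ employs the \emph{background} connection, so its second-order content is purely $\partial\partial B$, with no $\partial\partial g$ produced. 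Exploiting antisymmetry of $B$ together with symmetry of $g^{-1}$, the two obstructive terms combine to zero, and one obtains $(\hat{\Box}_{\mathrm{Hd}}B)_{\mu\nu}=g^{\alpha\beta}\partial_\alpha\partial_\beta B_{\mu\nu}+\text{l.o.t.}$ The remaining non-linearity is a polynomial in $g$, $g^{-1}$, $\partial g$, $B$, $\partial B$ and the Christoffel symbols of $\bar g$, augmented by the source $\extd^*\bar H+\tfrac{4}{d-2}i_\xi H$; each term of it vanishes under the hypotheses of the proposition.

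The dilaton equation is immediate: $\Box\phi=g^{\mu\nu}\partial_\mu\partial_\nu\phi-g^{\mu\nu}\Gamma^\lambda_{\mu\nu}\partial_\lambda\phi$, with manifest principal symbol, and the right-hand side is algebraic in $\phi$, $g$, and $H=\bar H+\extd B$. Assembling the three equations, the system has exactly the shape required by Proposition \ref{localexistenceprop_preliminaries}, with $u=(g_{\mu\nu},B_{\mu\nu},\phi)$ and background fields $(\bar g_{\mu\nu},\bar H_{\mu\nu\lambda})$. The main (and essentially only) obstacle is the careful bookkeeping in the $B$-field step: one must track which rearrangements of $\partial^\lambda=g^{\lambda\kappa}\partial_\kappa$ produce lower-order corrections through $\partial g$, and verify that the use of $\bar\nabla$ in the definition of $\extd^*_{g,\bar g}$ is precisely what prevents $\partial\partial g$ from entering the modification and thereby permits the cancellation to occur.
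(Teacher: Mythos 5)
Your proposal is correct and follows essentially the same route as the paper: citing (\ref{richateq}) for the Ricci equation, the coordinate form of $\Box$ for the dilaton equation, and verifying for the $B$-field equation that the off-diagonal second-order terms of $\extd^*\extd B$ cancel against those of $\extd\extd^*_{g,\bar g}B$ (via antisymmetry of $B$), with the background connection preventing second derivatives of $g$ from entering. The only difference is one of explicitness: the paper writes out the full coordinate expansion of $\hat{\Box}_{\mathrm{Hd}}B_{\mu\nu}$, including all lower-order terms (among them $\partial\bar\Gamma\cdot B$ terms, which is why $\partial_\alpha\partial_\beta\bar g_{\mu\nu}$ appears in the bullet list), whereas you argue modulo \enquote{l.o.t.}; this does not affect correctness since those terms carry a factor of $B$ and vanish under the stated hypotheses.
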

\begin{proof}
    In the following, we refer to the demands on the principal symbol and the non-linearity as the equation \enquote{being of the right form}. For the Ricci-equation, the right form can be seen from the expression (\ref{richateq}) for the modified Ricci tensor $\richat$ in local coordinates. For the dilaton equation, the right form follows from the local expression of the d'Alembert operator $\Box \phi = g^{\mu\nu}\partial_\mu\partial_\nu \phi -  g^{\lambda\kappa}\Gamma^\alpha_{\lambda\kappa}\partial_\alpha\phi$. Finally, for the $B$-field equation, we compute
    \begin{equation*}
        \begin{split}
            \hat{\Box}_{\mathrm{Hd}} B_{\mu\nu} &= \partial^\lambda \partial_\lambda B_{\mu\nu} - \Gamma^\alpha \partial_\alpha B_{\mu\nu} + 2 g^{\lambda\kappa} \partial_{[\mu}(\bar{\Gamma}^\alpha_{\nu]\lambda} B_{\alpha\kappa}+ B_{\nu]\alpha}\bar{\Gamma}^\alpha_{\lambda \kappa}) - 2 g^{\lambda\kappa}B_{\beta\kappa} \Gamma^\alpha_{\lambda[\mu} \bar{\Gamma}^\beta_{\nu]\alpha}\\
            &\quad- 4g^{\lambda\kappa} B_{\beta[\mu} \Gamma^\alpha_{\nu]\lambda} \bar{\Gamma}^\beta_{\alpha\kappa} + 2 g^{\lambda\kappa} \Gamma^\beta_{\kappa[\mu}\partial_{|\lambda|}B_{\nu]\beta} - 2 g^{\lambda\kappa} B_{\alpha\beta} \Gamma^\beta_{\kappa[\mu}\bar{\Gamma}^\alpha_{\nu]\lambda},
        \end{split}
    \end{equation*}
    where we denoted $\partial^\lambda \coloneqq g^{\lambda\kappa}\partial_\kappa$. The right form follows.
\end{proof}
In the end, the modified system has to agree with the unmodified system, i.e.\ $\extd\mathcal{C} = 0$, since our actual interest lies in finding solutions to the latter. (For the same reason, we will also have $\mathcal{D} = 0$.) Thus, the gauge that we implement can be understood as 
\begin{equation}\label{gen_Lor_gauge:eq}
    0 = \extd \mathcal{C} =\extd \extd^*_{g,\Bar{g}} B
\end{equation}
We call this condition the \emph{generalised Lorenz gauge}. Note that it breaks invariance of the equations under closed $B$-field transformations, as $\extd\mathcal{C}[g,B] \neq \extd\mathcal{C}[g,B+\beta]$ for $\beta$ a closed two-form. We prove in Lemma \ref{bfieldgaugelemma} that one can always implement $\extd\mathcal{C}=0$ by virtue of a closed $B$-field transformation.
\begin{remark}\label{lorenzgauge_comparison_rem}
    We can compare the generalised Lorenz gauge (\ref{gen_Lor_gauge:eq}) to the Lorenz gauge $\extd^* A = 0$ as it is imposed on the electromagnetic field potential\footnote{For the purposes of this remark, we forget that the letter $A$ is already occupied by the object $A = \nabla -\Bar{\nabla}$.} $A$ in the study of the Cauchy Problem for the Einstein-Maxwell system in \cite{bruhatGRBook}. For a one-form, one can calculate that $\extd^* A = -\partial^\mu A_\mu + \Gamma^\mu A_\mu$. That is, the difference $\extd^*_{g,\Bar{g}}A - \extd^* A$ is given by $\mathcal{D}^\mu A_\mu$. As one also has to implement the gauge $\mathcal{D} = 0$ in the study of the Cauchy problem, one can equivalently state the Lorenz gauge as $\extd^*_{g,\Bar{g}} A = 0$. Noting that for a $0$-form being closed implies being constant, we see that, for a one form, the generalised Lorenz gauge $\extd \extd^*_{g,\bar{g}} A = 0$ is a mild generalisation of the Lorenz gauge.
\end{remark}
We now establish that one can obtain solutions to the original system (\ref{einsteinframecombinedeinsteineqs}) by studying the modified one (\ref{modifiedcombinedeinsteineqs}). Recall that under the assumptions of Lemma \ref{metricgaugefield_sourcefreewaveeq_lemma_preliminaries}, $\mathcal{D}$ vanishes on the entire Cauchy development of any subset $\Omega$ of the initial hypersurface $\Sigma$ on which $\mathcal{D}= 0$ and $\nabla\mathcal{D}= 0$. However, this result does not immediately apply to a solution of the modified GEE (\ref{modifiedcombinedeinsteineqs}). While we know from Proposition \ref{emtensorprop} that the energy-momentum tensor $T$ obtained in the Einstein frame is divergence-free if one assumes the $H$-field and dilaton equation, a priori this does not hold after one passes to the modified system (\ref{modifiedcombinedeinsteineqs}), because then one also modifies the $H$-field equation. The solution to this is to first prove the vanishing of the modifying term $\extd \mathcal{C}$ of the $H$-field equation under suitable conditions.
\begin{proposition}\label{bfieldgaugefield_sourcefreewaveeq_prop}
    Let $(M,g, H, \phi)$, $H= \Bar{H} + \extd B$, $\xi = \extd \phi$, be a globally hyperbolic SuGra spacetime such that (\ref{modifiedcombinedeinsteineqs}) is satisfied. Let $\Bar{g}$ be an arbitrary Lorentz metric on $M$, and define $\mathcal{D}$ and $\mathcal{C}$ as in (\ref{ddefeq}) and (\ref{cdefeq}), respectively.
    
    Then, assuming $\mathcal{D} = 0$, and $\nabla\mathcal{D} = \extd\mathcal{C} = 0$ on some subset $\Omega \subset \Sigma$, we have $\mathcal{D} = 0$ and $\extd\mathcal{C} = 0$ on the entire Cauchy development $D(\Omega)$.
\end{proposition}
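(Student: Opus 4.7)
The plan is to treat the two conclusions in sequence: first establish $\extd\mathcal{C} = 0$ on $D(\Omega)$, and then, via Proposition \ref{emtensorprop}, feed Lemma \ref{metricgaugefield_sourcefreewaveeq_lemma_preliminaries} to obtain $\mathcal{D} = 0$ on $D(\Omega)$. The obstacle to applying Lemma \ref{metricgaugefield_sourcefreewaveeq_lemma_preliminaries} directly is precisely that, in the modified system (\ref{modifiedcombinedeinsteineqs}), the energy-momentum tensor is no longer a priori divergence-free: the argument of Proposition \ref{emtensorprop} uses the \emph{unmodified} $H$-field equation, and a quick inspection of its step $(H)$ shows that with the modified equation one picks up the extra term $\tfrac{e^{-4\kappa\phi}}{4}(\extd\mathcal{C})^{\lambda\kappa} H_{\nu\lambda\kappa}$ in $\divergence T$. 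Controlling $\extd\mathcal{C}$ is therefore the linchpin.

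The central observation is that $\omega \coloneqq \extd\mathcal{C}$ itself satisfies a homogeneous version of the Einstein-frame $H$-field equation. Rewriting the modified $H$-field equation as
\[
\extd^*H + \tfrac{4}{d-2}\, i_\xi H + \extd\mathcal{C} = 0,
\]
I would apply $\extd^*$, use $\extd^*\extd^* = 0$, and exploit the commutation identity $\extd^*(i_\xi H) = -i_\xi(\extd^*H)$, valid because $\xi$ is closed (so $\nabla\xi$ is symmetric) while $H$ is antisymmetric. Substituting the modified $H$-field equation once more to eliminate $\extd^*H$ kills the $H$-contribution via $i_\xi i_\xi = 0$ and leaves the clean pair
\[
\extd\omega = 0, \qquad \extd^*\omega + \tfrac{4}{d-2}\, i_\xi \omega = 0,
\]
i.e.\ exactly the structure of the first equation in Proposition \ref{einsteinframeprop}. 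Applying $\extd$ to the second identity then yields the linear wave equation $\Box_{\mathrm{Hd}}\omega = \tfrac{4}{d-2}\,\extd(i_\xi \omega)$, whose principal symbol is the inverse metric.

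To propagate $\omega|_\Omega = 0$ to $D(\Omega)$, I also need $\nabla_N \omega|_\Omega = 0$, which I would extract directly from the two first-order identities. Since $\omega|_\Omega = 0$, all its tangential derivatives along $\Omega$ vanish there; in Gaussian coordinates adapted to $\Sigma$, restricting $\extd\omega = 0$ to $\Omega$ then collapses to $\nabla_N \omega_{ab}|_\Omega = 0$, while $\extd^*\omega + \tfrac{4}{d-2}\, i_\xi\omega = 0$ collapses to $\nabla_N \omega_{Na}|_\Omega = 0$; together with the antisymmetric $\omega_{NN} = 0$, this exhausts all components. Standard uniqueness for linear second-order hyperbolic equations with metric principal symbol then delivers $\omega = \extd\mathcal{C} = 0$ throughout $D(\Omega)$.

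With $\extd\mathcal{C} = 0$ on $D(\Omega)$, the modified $H$-field equation reduces to the original Einstein-frame $H$-field equation, so Proposition \ref{emtensorprop} applies and gives $\divergence T = 0$ on $D(\Omega)$. Restricting the arena to $D(\Omega)$, which is globally hyperbolic with Cauchy hypersurface $\Omega$, Lemma \ref{metricgaugefield_sourcefreewaveeq_lemma_preliminaries} applied to the modified Ricci equation with the initial data $\mathcal{D}|_\Omega = \nabla\mathcal{D}|_\Omega = 0$ then closes the argument. I expect the one genuinely delicate point to be the initial-data step for $\omega$: having to squeeze $\nabla_N \omega|_\Omega = 0$ out of the first-order identities (rather than having it as a hypothesis) is what makes the two equations $\extd\omega = 0$ and $\extd^*\omega = -\tfrac{4}{d-2}i_\xi\omega$ indispensable, as opposed to the wave equation alone.
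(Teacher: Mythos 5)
Your proposal is correct, and its skeleton coincides with the paper's proof: apply $\extd^*$ to the modified $B$-field equation, use $\extd^*\extd^* = 0$, the identity $\extd^*(i_\xi H) = -i_\xi(\extd^* H)$ (valid since $\xi$ is closed), and $i_\xi i_\xi H = 0$ to arrive at the homogeneous equation $\extd^*\extd\mathcal{C} + \tfrac{4}{d-2}\, i_\xi\,\extd\mathcal{C} = 0$; then, once $\extd\mathcal{C} = 0$ is known, the modified $H$-field equation is effectively unmodified, Proposition \ref{emtensorprop} gives $\divergence T = 0$, and Lemma \ref{metricgaugefield_sourcefreewaveeq_lemma_preliminaries} yields $\mathcal{D} = 0$. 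Where you diverge is the mechanism for propagating $\extd\mathcal{C} = 0$: the paper reads the derived equation as a source-free co-exact wave equation for the potential $\mathcal{C}$ and invokes Theorem \ref{coexact_wave_eqs_main_thm} (uniqueness up to closed forms), whereas you pass to the field strength $\omega = \extd\mathcal{C}$, record the first-order pair $\extd\omega = 0$ and $\extd^*\omega = -\tfrac{4}{d-2} i_\xi\omega$, derive from it a second-order tensor wave equation with metric principal symbol, and extract the Cauchy data $\omega|_\Omega = \nabla_N\omega|_\Omega = 0$ by hand from the first-order identities (via the hypersurface decomposition of Lemma \ref{extdhypersurflemma}). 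Your route buys two things: it avoids the bespoke co-exact wave equation machinery in favour of the standard homogeneous uniqueness statement of Theorem \ref{tensorwaveeqthm}, and it localises cleanly to $D(\Omega)$ for a proper subset $\Omega \subsetneq \Sigma$ — the paper's invocation of Theorem \ref{coexact_wave_eqs_main_thm}, which is stated for data on a full Cauchy hypersurface, glosses over this (note its "$D(\Omega) = M$"). The cost is the extra bookkeeping in verifying $\nabla_N\omega|_\Omega = 0$ componentwise, which you correctly identify as the delicate point and handle adequately.
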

\begin{proof}
    We begin by deriving a source-free wave equation for $\mathcal{C}$. We take the co-differential of the modified $B$-field equation:
    \begin{equation}\label{cwaveeq}
        \begin{split}
            0 &= \extd^*\left[\extd^* H + \extd \mathcal{C} + \frac{4}{d-2} i_\xi H\right] \\
            &= \extd^*\extd \mathcal{C} - \frac{4}{d-2} i_\xi \extd^*H \\
            &= \extd^*\extd\mathcal{C} +\frac{4}{d-2} i_\xi\left(\extd \mathcal{C} + \frac{4}{d-2} i_\xi H\right) \\
            &= \extd^*\extd\mathcal{C} +\frac{4}{d-2} i_\xi \extd \mathcal{C}
        \end{split}
    \end{equation}
    Herein, we have used the antisymmetry of $H$ to conclude $i_\xi i_\xi H = 0$ and
    \begin{equation*}
        \extd^*(i_\xi H)_\nu = -\nabla^\mu [(\nabla^\lambda\phi) H_{\lambda \mu \nu}] = (\nabla^\lambda \phi) \nabla^\mu H_{\mu\lambda\nu} = -(i_{\xi} \extd^*H)_\nu.
    \end{equation*}
    We note that (\ref{cwaveeq}) has indeed no source terms. Thus, by Theorem \ref{coexact_wave_eqs_main_thm}, we must have $\extd\mathcal{C} = 0$ on $D(\Omega) = M$. 

    Finally, the tensor $T = \richat - \frac{1}{2}\trwith{g}\richat$ agrees with the energy momentum tensor obtained in Proposition \ref{emtensorprop} in the Einstein frame. It follows from the (now effectively unmodified) $H$-field and dilaton equations that $\divergence T = 0$. Hence, by Lemma \ref{metricgaugefield_sourcefreewaveeq_lemma_preliminaries}, $D = 0$ on $D(\Omega)$. 
\end{proof}

\subsection{Setting Up Initial Values}\label{intialdatasection}
In this section, we set up initial values for the modified system (\ref{modifiedcombinedeinsteineqs}). This means that, given initial data $\mathcal{I} = (\Sigma,g_0,k,H_0,h_0,\phi_0,\phi_1)$ to the Einstein frame GEE (\ref{einsteinframecombinedeinsteineqs}), we construct a manifold $M$ on which the development may take place, equipped with a background metric $\Bar{g}_{\mathcal{I}}$, a background field $\Bar{H}_{\mathcal{I}} \in \Omega^3_{\mathrm{cl}}(M)$, and on the initial hypersurface $\Sigma$ the metric $g$, the $B$-field, the dilaton $\phi$, and their respective first derivatives, all such that with $H = \bar{H}_{\mathcal{I}} + \extd B$ the initial data induced on $\Sigma$ by $(M,g,H,\phi)$ reproduces the given initial data. We again drop the tildes over the objects in the Einstein frame. (We remark on this as we reinstate them in subsequent sections.)  

We choose to highlight the dependence of the background fields on the initial data in our notation. This is motivated by the Einstein frame initial data only existing locally and with respect to a choice of potential for the dilaton, in contrast to the situation present in the Einstein equations coupled to a scalar field as discussed in \cite{cauchy} where well-defined global initial data and background fields exist.

Note we don't have initial data for the $B$-field directly, but only for its exterior derivative $\extd B$. However, in the discussion of uniqueness of solutions, it is necessary to have better control over $B$, and so we artificially introduce two-forms $B_0, B_1 \in \Omega^2(\Sigma)$ that are compatible with the requirements on $\extd B$ and demand $B^\parallel = B_0$ and $(\nabla_N B)^\parallel - \extd^\Sigma(B(N)) = B_1$, where $N$ is the unit normal on $\Sigma$. With these additional requirements, we can determine most degrees of freedom in the initial values of $g$, $B$, and $\phi$.

However, in the specification of the normal parts of $g$ and $B$ and their normal derivatives, there still appear degrees of freedom that are not determined by the initial data. For the metric, this is observed e.g.\ in \cite[Chapter 14.2]{cauchy}. The reason is that the induced initial data is invariant under the action of diffeomorphisms on $M$ which restrict to the identity on $\Sigma$. The freedom in specifying the normal parts of $g$ and its normal derivative can be used to specify the unit normal vector and force $\mathcal{D} = 0$ initially. For the $B$-field, the initial data induced is invariant even under the action of closed $B$-field transformations which restrict to the identity on the generalised tangent bundle over $\Sigma$. That is, we can add to $B$ any closed two-form $\beta \in \Omega^2(M)$ such that $\beta^\parallel = 0$. We use this freedom to achieve $B(N) = 0$ and $\mathcal{C}^\parallel = 0$ on $\Sigma$.

Crucially, the way we set up the initial data is geometrical and does not depend on a choice of coordinates. Following \cite{cauchy}, this allows the glueing together of developments defined only on a small neighbourhood of points in $\Sigma$, as it forces these developments to agree on the initial hypersurface (where both are defined). Taking this one step further, we have to set up the initial data independent of the choice of Einstein frame, so that we can glue together developments defined in different Einstein frames. Note that two dilaton potentials $\phi$ and $\phi'$ to a given dilaton $\xi$ defined over the same connected set differ by a constant. 

Let us define the background manifold and the background fields. Let $M = \reals \times \Sigma$, and identify $\Sigma \cong \{0\} \times \Sigma$. Denote by $t \colon \reals \to \Sigma$ the coordinate coming from projection onto the first factor. We define the background fields as
\begin{equation}\label{backgroundfieldsdefeq}
    \Bar{g}_{\mathcal{I}} \coloneqq -e^{-2\kappa\phi_0}\extd t^2 + g_0, \qquad\quad  \Bar{H}_{\mathcal{I}} \coloneqq H_0 + \extd t \wedge h_0 + t\: \extd^\Sigma h_0
\end{equation}
where one can check $\bar{H}_{\mathcal{I}}$ to indeed be closed. 

In \cite[Chapter 14.2]{cauchy}, the initial values of $g$ and $\phi$ were fixed by imposing the requirements stated in the following Lemma - except for the factor $e^{\kappa \phi_0}$ in the first condition. This factor is needed to achieve invariance of the conditions under a change of Einstein frame. 
\begin{lemma}\label{initialdata_setup_metric_phi_lemma}
    The initial values for $g$, $\phi$, and their first time-derivatives are uniquely determined (and not overdetermined) by the following requirements.
    \begin{enumerate} [label = (\arabic*)]
        \item $e^{\kappa\phi_0}\partial_t$ agrees with the unit normal $N$ on $\Sigma$.
        \item $g$ induces the initial data $(g_0,k)$ on $\Sigma$, i.e.\ $g^\parallel = g_0$ and $2 k = L_N g$.
        \item $\phi$ induces the initial data $(\phi_0,\phi_1)$ on $\Sigma$, i.e.\ $\restr{\phi}{\Sigma} = \phi_0$ and $N(\phi) = \phi_1$.
        \item Initially, $g$ satisfies DeTurck's gauge, i.e.\ $\restr{\mathcal{D}[g,\bar{g}_{\mathcal{I}}]}{\Sigma} = 0$.
    \end{enumerate}
\end{lemma}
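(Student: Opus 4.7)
The plan is to unfold the four conditions in order and check that together they pin down $g|_\Sigma$, $\partial_t g|_\Sigma$, $\phi|_\Sigma$, and $\partial_t \phi|_\Sigma$ uniquely and without overdetermination. In the coordinates $(t,x^i)$ coming from $M = \reals \times \Sigma$, the unknowns are the blocks $g_{00}, g_{0i}, g_{ij}$ together with their $t$-derivatives on $\Sigma$, plus $\phi|_\Sigma$ and $\partial_t\phi|_\Sigma$, a total of $n^2+3n+4$ scalars; the four conditions split these into disjoint pieces that exactly match the count.

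First, condition~(1) requires $e^{\kappa\phi_0}\partial_t$ to be $g$-orthogonal to $T\Sigma$ and of squared norm $-1$, which fixes $g_{0i}|_\Sigma=0$ and $g_{00}|_\Sigma=-e^{-2\kappa\phi_0}$, i.e.\ the $n+1$ components $g_{0\mu}|_\Sigma$. Condition~(2) immediately gives $g_{ij}|_\Sigma=(g_0)_{ij}$, and its tangential second part $(L_N g)(\partial_i,\partial_j)=2k_{ij}$ expands, using $g_{0i}|_\Sigma=0$ and $\partial_i N^j|_\Sigma=0$ (since $N^j$ vanishes identically on $\Sigma$ and $\partial_i$ is tangential), to $e^{\kappa\phi_0}\partial_t g_{ij}|_\Sigma = 2 k_{ij}$; this determines $\partial_t g_{ij}|_\Sigma$ uniquely. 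Condition~(3) is immediate: $\phi|_\Sigma=\phi_0$ and $\partial_t\phi|_\Sigma = e^{-\kappa\phi_0}\phi_1$.

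What remains are $\partial_t g_{00}|_\Sigma$ and $\partial_t g_{0i}|_\Sigma$, i.e.\ exactly $n+1$ scalars to be matched against the $n+1$ equations $\mathcal{D}[g,\Bar{g}_\mathcal{I}]|_\Sigma=0$ of condition~(4). This is the key calculation. Writing $g^{\alpha\beta}\Gamma^\nu_{\alpha\beta} = g^{\nu\mu}g^{\alpha\beta}\partial_\alpha g_{\mu\beta} - \tfrac{1}{2}g^{\nu\mu}g^{\alpha\beta}\partial_\mu g_{\alpha\beta}$ and using $g^{0i}|_\Sigma=0$, $g^{00}|_\Sigma=-e^{2\kappa\phi_0}$, a direct bookkeeping shows that the time-derivative part of $g^{\alpha\beta}\Gamma^0_{\alpha\beta}|_\Sigma$ is $\tfrac{1}{2}e^{4\kappa\phi_0}\,\partial_t g_{00}|_\Sigma$ up to terms already known from steps~(1)--(3), while the time-derivative part of $g^{\alpha\beta}\Gamma^k_{\alpha\beta}|_\Sigma$ is $-e^{2\kappa\phi_0}(g_0)^{ki}\,\partial_t g_{0i}|_\Sigma$ up to known terms. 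Since $\Bar g_\mathcal{I}$ (and hence $g^{\alpha\beta}\Bar\Gamma^\nu_{\alpha\beta}|_\Sigma$) is explicit, the equations $\mathcal{D}^\nu|_\Sigma=0$ form a triangular linear system: $\nu=0$ determines $\partial_t g_{00}|_\Sigma$ via the nonzero coefficient $\tfrac{1}{2}e^{4\kappa\phi_0}$, and $\nu=k$ determines $\partial_t g_{0i}|_\Sigma$ via the invertible matrix $-e^{2\kappa\phi_0}(g_0)^{ki}$, with no coupling between the two blocks.

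I expect the main obstacle to be precisely this last bookkeeping step: tracking every appearance of $\partial_t g$ in $\mathcal{D}|_\Sigma$, ruling out a hidden coupling between $\partial_t g_{00}$ and $\partial_t g_{0i}$, and verifying that the resulting linear system is invertible rather than only consistent. Once the two coefficients above are exhibited and the count of $n+1$ equations against $n+1$ remaining unknowns is confirmed, uniqueness and non-overdetermination both follow, and conditions~(1)--(4) together specify the initial values of $g$, $\partial_t g$, $\phi$, and $\partial_t \phi$ on $\Sigma$ completely.
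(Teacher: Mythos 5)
Your proposal is correct and follows essentially the same route as the paper: conditions (1)--(3) fix $g_{0\mu}|_\Sigma$, $g_{ij}|_\Sigma$, $\partial_t g_{ij}|_\Sigma$, and the dilaton data, and condition (4) is then an invertible (indeed decoupled) linear system for the remaining $n+1$ unknowns $\partial_t g_{00}|_\Sigma$ and $\partial_t g_{0i}|_\Sigma$, with the same leading coefficients you exhibit (the paper just writes out the explicit solution rather than arguing invertibility).
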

\begin{proof}
    To discuss this more explicitly, pick a coordinate chart $(U_\Sigma,x^m)$ in $\Sigma$ and extend it to coordinates $(x^0 = t, x^m)$ on $\reals \times U_\Sigma$. We specify the metric $g$ and its first derivatives on $U_\Sigma$ in these coordinates. The spatial part of $g$ is fixed by (2):
    \begin{equation}\label{initialdataconstrgdef1}
        \restr{g_{mn}}{t=0} = (g_0)_{mn} 
    \end{equation}
    Requirement (1) is equivalent to 
    \begin{equation}\label{initialdataconstrgdef2}
        \restr{g_{00}}{t=0} = -e^{-2\kappa\phi_0}, \qquad\quad \restr{g_{0n}}{t=0} = 0.
    \end{equation}
    From the formula for the second fundamental form $K_{mn} =\frac{1}{2} \absolute{g_{00}}^{-1/2}g_{mn,0}$, it is immediate that (2) is equivalent to
    \begin{equation}\label{initialdataconstrgdef3}
        \restr{g_{mn,0}}{t=0} = 2e^{-\kappa \phi_0} k_{mn}
    \end{equation}
    It remains to specify $g_{00,0}$ and $g_{0k,0}$, whose values we will see to be determined by (4). We make the following auxiliary computations for $t=0$:
    \begin{equation*}
        \begin{split}
            \Gamma^0_{00} &= \frac{1}{2}g^{00}g_{00,0} = -\frac{1}{2}e^{2\kappa\phi_0} g_{00,0},\\
            \Gamma^k_{00} &= -\frac{1}{2}g^{kl}(g_{00,l}-2g_{0l,0}) = g^{kl}(-\kappa e^{-2\kappa\phi_0}\partial_l\phi + g_{0l,0}), \\
            \Gamma^0_{mn} &= \frac{1}{g_{00}}g(\nabla_m \partial_n, \partial_0) =- e^{\kappa\phi_0} g(\nabla_m \partial_n, N) = e^{\kappa\phi_0}k_{mn}, \\
            \Gamma^k_{mn} &= -\frac{1}{2}g^{kl}(g_{mn,l}-g_{lm,n}-g_{nl,m}) = (\Gamma^\Sigma)^k_{mn}.
        \end{split}
    \end{equation*}
    Herein, $(\Gamma^\Sigma)^k_{mn}$ denote the Christoffel symbols for $g_0$. From this, it follows that
    \begin{equation*}
        \begin{split}
            \Gamma_0 &= g_{00} g^{\alpha\beta} \Gamma^0_{\alpha\beta} = \Gamma^0_{00} - e^{-\kappa\phi_0} \tr k = -\frac{1}{2}e^{2\kappa\phi_0} g_{00,0} - e^{-\kappa\phi_0} \tr k,\\
            \Gamma_l &= g_{lk} g^{\alpha\beta} \Gamma^k_{\alpha\beta} = \Gamma^\Sigma_l + g_{lk} g^{00} \Gamma^k_{00} = \Gamma^\Sigma_l + \kappa \partial_l\phi_0 - e^{2\kappa\phi_0} g_{0l,0} .
        \end{split}
    \end{equation*}
    Herein, $\Gamma^\Sigma_l$ denotes the contracted Christoffel symbol for $g_0$. We see that initially $0 = \mathcal{D}_\mu = F_\mu - \Gamma_\mu$ can be easily achieved by requiring
    \begin{equation*}
        \begin{split}
            \restr{g_{00,0}}{t=0} &= \restr{-2e^{-2\kappa\phi_0}[F_0 +e^{-\kappa\phi_0}\tr k ]}{t=0} ,\\
            \restr{g_{0k,0}}{t=0} &= \restr{e^{-2\kappa\phi_0}\left[\Gamma^\Sigma_k - F_k + \kappa \partial_l \phi_0 \right]}{t=0}.
        \end{split}
    \end{equation*}
    We finish the proof by noting that, trivially, (3) uniquely determines the initial value of $\phi$ and its first time-derivative. 
\end{proof}
We denote by $\nabla$ the Levi-Civita connection of $g$, and by $N = e^{\kappa \phi_0}\partial_t$ the unit normal on $\Sigma$. We denote by $B_0,B_1 \in \Omega^2(\Sigma)$ initial data for the $B$-field. Note that $\bar{H}_{\mathcal{I}}$ induces the initial data $(H_0,h_0)$, so that the initial requirement for $B$ is $\extd B = 0$. Thus, we have the canonical choice $B_0 = B_1 = 0$ available.
\begin{lemma}\label{initialdata_setup_bfield_lemma}
    The initial values for $B$ and its first derivative are uniquely determined (and not overdetermined) by the following requirements.
    \begin{enumerate}[label=(\arabic*)]
        \item $B$ induces the initial data $(B_0,B_1)$, i.e.\ $B^\parallel = B_0$ and $(\nabla_N B)^\parallel = B_1$.
        \item Initially, $B$ has no normal component, i.e.\ $B(N) = 0$.
        \item Initially, $B$ satisfies $(\mathcal{C}[g,B,\bar{g}_{\mathcal{I}}])^\parallel = 0$.
    \end{enumerate}    
    Finally, in the case $B_0 = B_1 = 0$, these requirements are solved by demanding $B$ and its time derivative to vanish initially.
\end{lemma}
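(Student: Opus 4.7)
I would work in coordinates $(x^0=t,x^m)$ adapted to $\Sigma$ as in the proof of Lemma \ref{initialdata_setup_metric_phi_lemma}. The unknowns are $B_{\mu\nu}|_{t=0}$ and $\partial_t B_{\mu\nu}|_{t=0}$, which by antisymmetry split into four families: $B_{mn}|_\Sigma$, $B_{0m}|_\Sigma$, $\partial_t B_{mn}|_\Sigma$, and $\partial_t B_{0m}|_\Sigma$. The plan is to show that each of the conditions (1), (2), (3) uniquely determines exactly one of these families in the order listed, thereby proving both uniqueness and the absence of overdetermination. Recall from Lemma \ref{initialdata_setup_metric_phi_lemma} that $g_{0n}|_\Sigma=0$ and $g_{00}|_\Sigma=-e^{-2\kappa\phi_0}$, so in particular $g^{00}|_\Sigma=-e^{2\kappa\phi_0}$ and $g^{0k}|_\Sigma=0$.

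For (1), the condition $B^\parallel=B_0$ directly yields $B_{mn}|_\Sigma=(B_0)_{mn}$. Using $N=e^{\kappa\phi_0}\partial_t$, the coordinate expression
\begin{equation*}
    (\nabla_N B)_{mn}\big|_\Sigma = e^{\kappa\phi_0}\bigl(\partial_t B_{mn} - \Gamma^\rho_{0m} B_{\rho n} - \Gamma^\rho_{0n} B_{m\rho}\bigr)\big|_\Sigma
\end{equation*}
combined with the already fixed data $g|_\Sigma$, $\partial_t g|_\Sigma$, and $B|_\Sigma$ (once (2) is used to pin down $B_{0m}|_\Sigma$) then determines $\partial_t B_{mn}|_\Sigma$ uniquely from $(B_1)_{mn}$. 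Requirement (2) reads $0 = B(N)|_\Sigma(\partial_m) = e^{\kappa\phi_0} B_{0m}|_\Sigma$, forcing $B_{0m}|_\Sigma=0$.

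For (3), I would expand
\begin{equation*}
    \mathcal{C}_m = -g^{\kappa\lambda}\bigl(\partial_\kappa B_{\lambda m} - \bar{\Gamma}^\rho_{\kappa\lambda} B_{\rho m} - \bar{\Gamma}^\rho_{\kappa m} B_{\lambda\rho}\bigr)
\end{equation*}
at $t=0$. By the block-diagonal structure of $g|_\Sigma$, the $(\kappa,\lambda)=(0,k)$ and $(k,0)$ contributions vanish, so the only unknown appearing is $\partial_t B_{0m}|_\Sigma$, which arises solely from the $(\kappa,\lambda)=(0,0)$ summand with nonzero coefficient $-g^{00}|_\Sigma = e^{2\kappa\phi_0}$. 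All other contributions are polynomial expressions in $g|_\Sigma$, $\bar{g}_{\mathcal{I}}|_\Sigma$, $B|_\Sigma$, and their spatial derivatives, all of which are fixed by the preceding steps. Hence the vanishing of $(\mathcal{C}[g,B,\bar{g}_{\mathcal{I}}])^\parallel$ on $\Sigma$ is a single linear equation that uniquely solves for $\partial_t B_{0m}|_\Sigma$.

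For the final claim, when $B_0=B_1=0$ I would verify that the prescription $B|_\Sigma=0$, $\partial_t B|_\Sigma=0$ fulfils (1)--(3): (1) and (2) are immediate (the Christoffel terms in the formula for $(\nabla_N B)_{mn}|_\Sigma$ vanish identically because $B|_\Sigma=0$), and for (3) every term of $\bar{\nabla}_\kappa B_{\lambda m}|_\Sigma$ vanishes, since $B|_\Sigma=0$ kills the Christoffel contributions and all spatial derivatives $\partial_k B_{\lambda m}|_\Sigma$, while $\partial_t B_{\lambda m}|_\Sigma=0$ by hypothesis. The only real bookkeeping step is the clean isolation of $\partial_t B_{0m}|_\Sigma$ in $\mathcal{C}_m|_\Sigma$; this is what fails if one replaces $\bar{g}_{\mathcal{I}}$ by a background whose metric is not adapted to $\Sigma$ in the sense of Lemma \ref{initialdata_setup_metric_phi_lemma}, and it is here that the careful choice of $\bar{g}_{\mathcal{I}}$ in (\ref{backgroundfieldsdefeq}) pays off.
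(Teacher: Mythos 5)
Your proposal is correct and follows essentially the same route as the paper: both split the unknowns into the tangential and normal parts of $B|_\Sigma$ and $\nabla_N B|_\Sigma$, let (1) fix the tangential parts, (2) fix the normal part of $B$, and observe that (3) is a linear equation for the remaining normal part of $\nabla_N B$ with invertible (here $-g^{00}|_\Sigma=e^{2\kappa\phi_0}$) coefficient, all other contributions being already determined. The only difference is presentational: you carry this out in adapted coordinates, whereas the paper phrases the same count invariantly via the decomposition $\restr{B}{\Sigma}=B_0^\parallel-N^\flat\wedge b_0$, $\restr{\nabla_N B}{\Sigma}=B_1^\parallel-N^\flat\wedge b_1$ and the hypersurface formula for $\extd^*$ from Lemma \ref{extdhypersurflemma}.
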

\begin{proof}
    Write, implementing (1),
    \begin{equation*}
        \restr{B}{t=0} = B_0 - N^\flat \wedge b_0, \qquad\quad \restr{\nabla_N B}{t=0} = B_1 - N^\flat \wedge b_1. 
    \end{equation*}
    Requirement (2) yields $b_0 = 0$. It remains to determine $b_1$. Consider that by definition of $\mathcal{C}$ (\ref{cdefeq}) and Lemma \ref{extdhypersurflemma}
    \begin{equation*}
        \begin{split}
            \mathcal{C}^\parallel &= (\extd^*_{g,\Bar{g}_{\mathcal{I}}}B)^\parallel = (\extd^*B)^\parallel + E[B] \\
            &= (\extd^\Sigma)^*B^\parallel_0 + b_1 + E[B]  \\
        \end{split}
    \end{equation*}
    where $E[B]$ is some covector valued expression homogeneous and linear in $B$. To achieve that $\mathcal{C}^\parallel$ vanishes initially, we thus set
    \begin{equation*}
        b_1 =  - (\extd^\Sigma)^*B_0^\parallel - E[B].
    \end{equation*}
    Finally, considering the special case $B_0 = B_1 = 0$, we arrive at $\restr{B}{t=0} = \restr{\nabla_N B}{t=0} = 0$.
\end{proof}
The requirements of Lemmas \ref{initialdata_setup_metric_phi_lemma} and \ref{initialdata_setup_bfield_lemma} leave no degrees of freedom left to specify. However, due to the constraint equations, we get the following
\begin{lemma}\label{gaugefieldinitialvanishinglemma}
    Let $\mathcal{I}= (\Sigma,g_0,k,H_0, h_0,\phi_0,\phi_1)$ be initial data for the Einstein frame system (\ref{einsteinframecombinedeinsteineqs}). Assume that we can find a development $(M,g,H,\phi)$ of the initial data under the modified system (\ref{modifiedcombinedeinsteineqs}) satisfying the assumption of Lemmas \ref{initialdata_setup_metric_phi_lemma} and \ref{initialdata_setup_bfield_lemma}. Then we have initially
    \begin{equation*}
        \restr{\nabla\mathcal{D}}{\Sigma} = 0, \qquad\quad \restr{\extd\mathcal{C}}{\Sigma} = 0.
    \end{equation*}
\end{lemma}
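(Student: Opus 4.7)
I would split the argument along the two gauge quantities. By Lemmas~\ref{initialdata_setup_metric_phi_lemma}(4) and~\ref{initialdata_setup_bfield_lemma}(3), $\restr{\mathcal{D}}{\Sigma}=0$ and $\restr{\mathcal{C}^\parallel}{\Sigma}=0$, so the tangential parts are immediate: $\nabla_X\mathcal{D}=0$ on $\Sigma$ for every $X\in T\Sigma$ (tangential derivatives of a tensor vanishing on a submanifold vanish there), and $(\extd\mathcal{C})^\parallel=\extd^\Sigma\mathcal{C}^\parallel=0$ on $\Sigma$ since pullback along $\Sigma\hookrightarrow M$ commutes with $\extd$. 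It remains to show that the transverse components $\nabla_N\mathcal{D}$ and $(\extd\mathcal{C})^\perp=\restr{i_N\extd\mathcal{C}}{T\Sigma}$ vanish on $\Sigma$, and for both I would invoke the Einstein-frame constraints satisfied by $\mathcal{I}$.

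For $\nabla_N\mathcal{D}$, I would rewrite the modified Ricci equation as $\hat{G}=T$, where $\hat{G}_{\mu\nu}:=\richat_{\mu\nu}-\tfrac{1}{2}(\trwith{g}\richat)\,g_{\mu\nu}$ and $T$ is the Einstein-frame energy--momentum tensor of Proposition~\ref{emtensorprop} (whose explicit form depends only on the matter side of the Ricci equation, which is unchanged in passing from~(\ref{einsteinframecombinedeinsteineqs}) to~(\ref{modifiedcombinedeinsteineqs})). The Hamiltonian and momentum constraints in $\mathcal{I}$ read $G(N,N)=T(N,N)$ and $G(N,X)=T(N,X)$ for $X\in T\Sigma$, so subtracting yields $(\hat{G}-G)(N,\cdot)=0$ on $\Sigma$. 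Using
\[\hat{G}_{\mu\nu}-G_{\mu\nu}=\nabla_{(\mu}\mathcal{D}_{\nu)}-\tfrac{1}{2}g_{\mu\nu}\nabla^\lambda\mathcal{D}_\lambda,\]
the vanishing of tangential derivatives of $\mathcal{D}$ on $\Sigma$, and the dual decomposition $g^{\mu\nu}|_\Sigma=-N^\mu N^\nu+g_\Sigma^{\mu\nu}$ to compute the trace as $\nabla^\lambda\mathcal{D}_\lambda|_\Sigma=-(\nabla_N\mathcal{D})(N)$, a short calculation reduces these identities to $(\nabla_N\mathcal{D})(X)=0$ for $X\in T\Sigma$ and $(\nabla_N\mathcal{D})(N)=0$. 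Together with the tangential vanishing, this gives $\restr{\nabla\mathcal{D}}{\Sigma}=0$.

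For $(\extd\mathcal{C})^\perp$, I would rearrange the modified $B$-field equation in~(\ref{modifiedcombinedeinsteineqs}) as
\[\extd\mathcal{C}=-\extd^*H-\tfrac{4}{d-2}\,i_\xi H,\]
contract with $N$, and restrict to $\Sigma$. By Lemma~\ref{extdhypersurflemma} and its analogue for $\extd^*$, the restrictions of $i_N\extd^*H$ and $i_N i_\xi H$ to $\Sigma$ are expressible in terms of the initial data $h_0$ and $\xi_0^\parallel=\extd^\Sigma\phi_0$; concretely, the right-hand side is (up to sign) the left-hand side of the third Einstein-frame constraint $[(\extd^\Sigma)^*+\tfrac{4}{d-2}\,i_{\extd^\Sigma\phi_0}]h_0=0$, which vanishes by hypothesis. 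Combined with $(\extd\mathcal{C})^\parallel=0$ from the first paragraph, this gives $\restr{\extd\mathcal{C}}{\Sigma}=0$.

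The main obstacle is conceptual bookkeeping rather than analysis: one must carefully distinguish identities that hold on all of $M$ (the modified system) from identities that hold only on $\Sigma$ (the constraints), verify that the non-gauge matter side of the modified Ricci equation coincides on $\Sigma$ with that of the unmodified equation so that $\hat{G}-G$ isolates exactly the gauge quantity $\mathcal{D}$, and correctly match the $N$-contraction of the modified $B$-field equation with the third constraint via Lemma~\ref{extdhypersurflemma}. Once these geometric identifications are in place, the remaining steps are short tensorial manipulations exploiting $\restr{\mathcal{D}}{\Sigma}=0$ and $\restr{\mathcal{C}^\parallel}{\Sigma}=0$.
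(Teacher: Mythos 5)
Your proposal is correct and follows essentially the same route as the paper: the tangential parts vanish automatically from $\restr{\mathcal{D}}{\Sigma}=0$ and $\mathcal{C}^\parallel=0$, the normal derivative of $\mathcal{D}$ is isolated by subtracting the constraint components of the unmodified Einstein equation from the modified one satisfied by the development, and $i_N\extd\mathcal{C}$ is isolated by matching the normal component of the modified $B$-field equation against the third constraint. The only (cosmetic, and arguably favourable) difference is that you contract with $N$ so as to use the genuine constraints $G(N,N)=T(N,N)$ and $G(N,X)=T(N,X)$, whereas the paper's proof as written invokes $G_{0m}=T_{0m}$ and $G_{mn}=T_{mn}$, the latter of which is not literally one of the constraint equations.
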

\begin{proof}
    Since we assume the initial data to solve the constraint equations, we have on $\Sigma$
    \begin{equation*}
        G_{0m} = T_{0m}, \qquad G_{mn} = T_{mn}, \qquad \extd^*\extd B_{0n} = - \extd^*(\bar{H}_{\mathcal{I}})_{0n} -\frac{4}{d-2} \xi^\lambda H_{\lambda0n}
    \end{equation*}
    Denote $\hat{G}_{\mu\nu} = \richat_{\mu\nu} - \frac{1}{2}(\trwith{g}\richat)g_{\mu\nu}$. Since we assume $(M,g,H,\phi)$ to be a solution of the modified system (\ref{modifiedcombinedeinsteineqs}), we can conclude that on $\Sigma$
    \begin{equation*}
        0 = \hat{G}_{0m} - T_{0m} = \hat{G}_{0m} - G_{0m} = \nabla_{(0}\mathcal{D}_{m)} - \frac{1}{2} g_{0m} \nabla^\lambda \mathcal{D}_\lambda = \frac{1}{2}\partial_0 \mathcal{D}_m
    \end{equation*}
    where the last equality follows from $\restr{\partial_t}{\Sigma}$ being normal to $\Sigma$ and $\restr{\mathcal{D}}{\Sigma} = 0$ initially. Furthermore on $\Sigma$
    \begin{equation*}
        0 = \hat{G}_{mn} - T_{mn} = \hat{G}_{mn} - G_{mn} = \nabla_{(m} \mathcal{D}_{n)}- \frac{1}{2} g_{mn} \nabla^\lambda \mathcal{D}_\lambda =- \frac{1}{2} g_{mn}g^{00} \partial_0 \mathcal{D}_0
    \end{equation*}
    where, in the last equality, we employed again that $\mathcal{D} = 0$ initially. Therefore, $\restr{\nabla \mathcal{D}}{\Sigma} = 0$.

    By a similar argument, we see that on $\Sigma$
    \begin{equation*}
        0 = \hat{\Box}_{\mathrm{Hd}}B_{0n} + \extd^*\extd B_{0n} =- (i_{\partial_t}\extd \mathcal{C})_n
    \end{equation*}
    From the required $\mathcal{C}^\parallel = 0$ (cf.\ Lemma \ref{initialdata_setup_bfield_lemma}), we get that
    \begin{equation*}
        (\extd\mathcal{C})^\parallel = 0.
    \end{equation*}
    Put together, $\restr{\extd\mathcal{C}}{\Sigma} = 0$ as claimed.
\end{proof}
Let us conclude the discussion of the initial data by checking that our constructions are in fact invariant under a change of Einstein frame.
\begin{lemma}\label{dilaton_gaugetrafo_lemma}
    Let $\mathcal{I} = (g_0,k,H_0,h_0,\phi_0,\phi_1)$ be initial data in an Einstein frame, and denote by $\bar{g}_{\mathcal{I}}$ and $\bar{H}_{\mathcal{I}}$ the associated background fields. Let $(g,H,\phi)$, $H = \bar{H}_{\mathcal{I}}+\extd B$, be a SuGra spacetime. Consider with $c \in \reals$ a change of Einstein frame
    \begin{equation*}
        \begin{split}
            (g',B',\phi') &\coloneqq (e^{-2\kappa c}g,B,\phi+c) \\
            \mathcal{I}' = (g_0',k',H_0',h_0',\phi_0',\phi_1')  &\coloneqq (e^{-2\kappa c}g_0,e^{-\kappa c}k,H_0, e^{\kappa c}h_0,\phi_0+c,e^{\kappa c}\phi_1). \\
        \end{split}
    \end{equation*}
    Then, the following hold.
    \begin{enumerate}[label=(\roman*)]
        \item $\mathcal{D}[g',\bar{g}_{\mathcal{I}'}] = \mathcal{D}[g,\bar{g}_{\mathcal{I}}]$. In particular, DeTurck's gauge condition $\mathcal{D} = 0$ is invariant under a change of Einstein frame.
        \item $\mathcal{C}[g',B',\bar{g}_{\mathcal{I}'}] = e^{2\kappa c} \mathcal{C}[g,B,\bar{g}_{\mathcal{I}}]$.
        In particular, the generalised Lorenz gauge condition $\extd\mathcal{C} = 0$ is invariant under a change of Einstein frame.
        \item If $(g,B,\phi)$ satisfies the conditions from Lemma \ref{initialdata_setup_metric_phi_lemma} with respect to $\mathcal{I}$, then $(g',B',\phi')$ satisfies those conditions with respect to $\mathcal{I}'$.
        \item If $(g,B,\phi)$ satisfies the conditions from Lemma \ref{initialdata_setup_bfield_lemma} with respect to $\mathcal{I}$, then $(g',B',\phi')$ satisfies those conditions with respect to $\mathcal{I}'$.
    \end{enumerate}
\end{lemma}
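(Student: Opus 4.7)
The central observation to leverage is that the frame change also rescales the background metric by a \emph{constant} factor: inserting $\phi_0' = \phi_0 + c$ and $g_0' = e^{-2\kappa c}g_0$ into the definition (\ref{backgroundfieldsdefeq}) gives $\bar{g}_{\mathcal{I}'} = e^{-2\kappa c}\bar{g}_{\mathcal{I}}$. Since $c$ is constant, both Levi-Civita connections are preserved, $\bar{\nabla}' = \bar{\nabla}$ and $\nabla' = \nabla$, and the $g'$-unit normal on $\Sigma$ is $N' = e^{\kappa c}N$ (as $g'(e^{\kappa c}N,e^{\kappa c}N) = g(N,N) = -1$). The more intricate transformation of $\bar{H}_{\mathcal{I}}$ never enters the argument, since neither $\mathcal{D}$ nor $\mathcal{C}$ refers to it.

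Given this, (i) and (ii) reduce to a check of factors. In the formula $\mathcal{D}_\mu = g_{\mu\nu}g^{\alpha\beta}(\Gamma^\nu_{\alpha\beta} - \bar{\Gamma}^\nu_{\alpha\beta})$, the Christoffel symbols are preserved under constant conformal rescaling and $g'_{\mu\nu}g'^{\alpha\beta} = g_{\mu\nu}g^{\alpha\beta}$ because the factors $e^{\mp 2\kappa c}$ cancel, proving (i). In $\mathcal{C}_\nu = -g^{\kappa\lambda}\bar{\nabla}_\kappa B_{\lambda\nu}$ only the inverse metric contributes, yielding $\mathcal{C}[g',B',\bar{g}_{\mathcal{I}'}] = e^{2\kappa c}\mathcal{C}[g,B,\bar{g}_{\mathcal{I}}]$, which establishes (ii). The invariance of both gauge conditions $\mathcal{D} = 0$ and $\extd\mathcal{C} = 0$ is then immediate.

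For (iii), I would verify the four conditions of Lemma \ref{initialdata_setup_metric_phi_lemma} in turn: $e^{\kappa \phi_0'}\partial_t = e^{\kappa c}e^{\kappa\phi_0}\partial_t = e^{\kappa c}N = N'$; $g'^\parallel = e^{-2\kappa c}g_0 = g_0'$; $L_{N'}g' = L_{e^{\kappa c}N}(e^{-2\kappa c}g) = e^{-\kappa c}L_Ng = 2 e^{-\kappa c} k = 2k'$ (where the constancy of $c$ is essential, as it kills any $\extd(e^{\kappa c})$-terms in the Lie derivative); $\restr{\phi'}{\Sigma} = \phi_0 + c = \phi_0'$ and $N'(\phi') = e^{\kappa c}N(\phi) = e^{\kappa c}\phi_1 = \phi_1'$; condition (4) is immediate from (i). For (iv), the analogous computations give $B'^\parallel = B^\parallel = B_0 = B_0'$, $(\nabla'_{N'}B')^\parallel = e^{\kappa c}(\nabla_NB)^\parallel = e^{\kappa c}B_1 = B_1'$, $B'(N') = e^{\kappa c}B(N) = 0$, and finally $(\mathcal{C}[g',B',\bar{g}_{\mathcal{I}'}])^\parallel = e^{2\kappa c}(\mathcal{C}[g,B,\bar{g}_{\mathcal{I}}])^\parallel = 0$ directly from (ii).

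The whole argument is essentially bookkeeping and presents no serious obstacle. The one place I would exercise some care is in fixing the correct transformation rules $B_0' = B_0$ and $B_1' = e^{\kappa c}B_1$ (obtained by specialising the general rules of Lemma \ref{initialdata_einsteinframe_lemma} to the case $\phi \mapsto c$, in which $\extd^\Sigma c = 0$, $N(c) = 0$, and $b_0 = 0$ by Lemma \ref{initialdata_setup_bfield_lemma}) in order to confirm that (iv.1) is indeed consistent with the primed $B$-field initial data in the new frame.
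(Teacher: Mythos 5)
Your proposal is correct and follows essentially the same route as the paper: both rest on the observations that $\bar{g}_{\mathcal{I}'} = e^{-2\kappa c}\bar{g}_{\mathcal{I}}$, that a constant conformal factor preserves the Levi-Civita connections, and that $N' = e^{\kappa c}N$, from which (i)--(iv) follow by the same bookkeeping (the paper merely delegates more of the checks in (iii) and (iv) to Lemma \ref{initialdata_einsteinframe_lemma} where you spell them out). Your care about the transformation rules $B_0' = B_0$, $B_1' = e^{\kappa c}B_1$ matches the paper's implicit use of that lemma.
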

\begin{proof}
    Note first that
    \begin{equation*}
        \bar{g}_{\mathcal{I}'} =e^{-2\kappa c}\bar{g}_{\mathcal{I}}, \qquad\quad \bar{H}_{\mathcal{I}'}= \bar{H}_{\mathcal{I}}.
    \end{equation*}
    \textbf{(i) \& (ii):} Recall that a conformal change with constant conformal factor leaves the associated LC connection invariant. The claims respectively follow from the definitions (\ref{ddefeq}) and (\ref{cdefeq}) of $\mathcal{D}$ and $\mathcal{C}$.
    
    \textbf{(iii):} We check conditions (1-4) for $(g',B',\phi')$. Condition (4) follows from assertion (i). Conditions (2) and (3) follow from Lemma \ref{initialdata_einsteinframe_lemma}. Finally, (1) demands that $N' =  e^{\kappa\phi'_0}\partial_t$, where $N'$ is the unit normal on $\Sigma$ with respect to $g'$. This holds because orthogonality is preserved by conformal transformations and by (1) for $(g,B,\phi)$
    \begin{equation*}
        g'(e^{\kappa\phi'_0}\partial_t,e^{\kappa\phi'_0}\partial_t) = g(e^{\kappa\phi_0}\partial_t,e^{\kappa\phi_0}\partial_t) = g(N,N) = -1,
    \end{equation*}
    where $N$ denotes the unit normal on $\Sigma$ with respect to $g$.
    
    \textbf{(iv):}  We check conditions (1-3) for $(g',B',\phi')$. Condition (1) follows from Lemma \ref{initialdata_einsteinframe_lemma}. Condition (3) follows from assertion (ii). Finally, (2) follows from $B' = B$ and $N' = e^{\kappa c}N$. 
\end{proof}

\subsection{Existence of a Globally Hyperbolic Development}\label{existence_ghd_section}

In this section, we prove the existence of a globally hyperbolic development to initial data for the GEE in the string frame (\ref{combinedgeneinsteineqs}). In the setting of the Einstein equations coupled to a scalar field, the corresponding result can for example be found in \cite[Theorem 14.2.]{cauchy}, and we closely follow the proof strategy established there. To minimise repetition and highlight our contributions, we present results established in the proof of \cite[Theorem 14.2.]{cauchy} in the preliminary Section \ref{preliminaries_section}. 

From Proposition \ref{localformprop}, we know the modified Einstein frame GEE to be locally of the form required for Proposition \ref{localexistenceprop_preliminaries} to apply. Proposition \ref{localexistenceprop_preliminaries} states that, in a neighbourhood of any point $p\in \Sigma$, there is a solution to the modified Einstein frame GEE. We apply it to obtain the following local existence result for the string frame GEE. Note that the result remembers the gauge-fixed Einstein frame development associated to the obtained string frame development. This is useful for patching together of solutions, as we only have good control over solutions of the modified Einstein frame GEE.
\begin{lemma}\label{localexistencelemma}
    Let $\mathcal{I} = (g_0,k,H_0,h_0,\xi_0,x_0)$ be string frame initial data on $\Sigma$. Let $U_\Sigma \subset \Sigma$ be a coordinate neighbourhood of $p \in \Sigma$ with coordinates $(x^1,...,x^n)$, and define coordinates $(x^0 = t,x^1,...,x^n)$ on $U = \reals \times U_\Sigma$.
    
    Then, for every open neighbourhood $O \subset U$ of $p$, there exists an open neighbourhood $W \subset O$ of $p$ on which the string frame GEE (\ref{combinedgeneinsteineqs}) admit a globally hyperbolic development $(g,H,\xi)$ of the initial data on $W \cap \Sigma$. Furthermore, the development can be assumed to be such that the following properties are satisfied.
        \begin{enumerate}[label = (\arabic*)]
        \item There exists a dilaton potential $\phi$, $\xi = \extd \phi$, and thus an associated Einstein frame development $(\tilde{g},H,\phi)$ and Einstein frame initial data $\tilde{\mathcal{I}}$. There also exists a $B$-field potential, $H= \bar{H}_{\tilde{\mathcal{I}}} + \extd B$. These potentials satisfy the gauge conditions $\mathcal{D}[\tilde{g},\bar{g}]= 0$ and $\extd \mathcal{C}[\tilde{g},B,\bar{g}] = 0$ as well as the initial conditions from Lemmas \ref{initialdata_setup_metric_phi_lemma} and \ref{initialdata_setup_bfield_lemma} with an initial vanishing condition for $B$.
        \item The component-matrix $(\tilde{g}_{\mu\nu})_{\mu\nu}$ (or equivalently $(g_{\mu\nu})_{\mu\nu}$) takes values in the space of canonical Lorentz matrices $\mathcal{C}_n$. Then, in particular, $\grad t$ is timelike on $W$.
    \end{enumerate}
\end{lemma}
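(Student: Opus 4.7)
The plan is to reduce the problem to the modified Einstein frame system and then invoke the local existence result from the preliminaries. Shrinking $O$ if necessary, we may assume $O \cap \Sigma$ is simply connected, so the closed one-form $\xi_0$ admits a potential $\phi_0 \in C^\infty(O \cap \Sigma)$. Using Lemma \ref{initialdata_einsteinframe_lemma} (in its inverse form), we translate the string frame initial data on $O \cap \Sigma$ into Einstein frame initial data $\tilde{\mathcal{I}} = (\tilde{g}_0, \tilde{k}, H_0, \tilde{h}_0, \phi_0, \tilde{\phi}_1)$. This fixes the background fields $\bar{g}_{\tilde{\mathcal{I}}}$ and $\bar{H}_{\tilde{\mathcal{I}}}$ on $O$ via (\ref{backgroundfieldsdefeq}), and we set up initial values for $(\tilde{g}, B, \phi)$ on $O \cap \Sigma$ by the prescriptions of Lemmas \ref{initialdata_setup_metric_phi_lemma} and \ref{initialdata_setup_bfield_lemma}, choosing the canonical vanishing option $B_0 = B_1 = 0$.

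Next, I apply Proposition \ref{localexistenceprop_preliminaries} to the modified Einstein frame GEE (\ref{modifiedcombinedeinsteineqs}): by Proposition \ref{localformprop} this system has the required quasi-linear form with metric principal symbol, so the proposition yields an open neighbourhood $W \subset O$ of $p$ on which the data admits a globally hyperbolic development $(\tilde{g}, B, \phi)$ of the prescribed initial values, with $(\tilde{g}_{\mu\nu})$ valued in the canonical Lorentz matrices $\mathcal{C}_n$ and $\grad t$ timelike. This immediately delivers property (2).

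It remains to pass from the modified Einstein frame system to the unmodified Einstein frame GEE, and from there to the string frame. The constraint equations satisfied by $\mathcal{I}$ translate into Einstein frame constraints for $\tilde{\mathcal{I}}$, and combined with the initial conditions arranged by Lemmas \ref{initialdata_setup_metric_phi_lemma} and \ref{initialdata_setup_bfield_lemma}, Lemma \ref{gaugefieldinitialvanishinglemma} gives $\restr{\mathcal{D}}{\Sigma} = 0$, $\restr{\nabla \mathcal{D}}{\Sigma} = 0$, and $\restr{\extd \mathcal{C}}{\Sigma} = 0$ on $W \cap \Sigma$. Because $(W, \tilde{g})$ is globally hyperbolic with Cauchy hypersurface $W \cap \Sigma$, Proposition \ref{bfieldgaugefield_sourcefreewaveeq_prop} then propagates $\mathcal{D} = 0$ and $\extd \mathcal{C} = 0$ to all of $W$. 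With these gauge conditions in force, $\richat = \ric$ and $\hat{\Box}_{\mathrm{Hd}} B = -\extd^* \extd B$, so $(\tilde{g}, H, \phi)$ with $H = \bar{H}_{\tilde{\mathcal{I}}} + \extd B$ is a genuine Einstein frame solution of (\ref{einsteinframecombinedeinsteineqs}) on $W$. Defining $g \coloneqq e^{2\kappa \phi} \tilde{g}$ and $\xi \coloneqq \extd \phi$, Proposition \ref{einsteinframeprop} yields the string frame GEE (\ref{combinedgeneinsteineqs}). Since the conformal transformation $g = e^{2\kappa \phi} \tilde{g}$ preserves Cauchy hypersurfaces and causal structure, $(W, g)$ is again globally hyperbolic with Cauchy surface $W \cap \Sigma$, and the induced string frame data on $W \cap \Sigma$ matches $\restr{\mathcal{I}}{W \cap \Sigma}$ by construction (Lemma \ref{initialdata_einsteinframe_lemma}). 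Property (1) is then immediate from the way $\phi$, $B$, $\bar{g}_{\tilde{\mathcal{I}}}$, and $\bar{H}_{\tilde{\mathcal{I}}}$ were set up.

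The main technical obstacle is bookkeeping: keeping track of which conditions live in which frame, and ensuring that the initial-data setup of Lemmas \ref{initialdata_setup_metric_phi_lemma} and \ref{initialdata_setup_bfield_lemma} is compatible both with the string frame constraints (so that Lemma \ref{gaugefieldinitialvanishinglemma} applies) and with the canonical Lorentz matrix normalisation required by Proposition \ref{localexistenceprop_preliminaries}. The factor $e^{\kappa \phi_0}$ built into condition (1) of Lemma \ref{initialdata_setup_metric_phi_lemma} is precisely what makes $\tilde{g}_{00}$ equal $-1$ at $\Sigma$, rendering the initial matrix canonical and enabling the cited propositions to apply unchanged.
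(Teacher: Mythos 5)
Your proposal is correct and follows essentially the same route as the paper's proof: pass to a contractible neighbourhood to obtain the dilaton potential, transform the data to the Einstein frame via Lemma \ref{initialdata_einsteinframe_lemma}, solve the modified system via Propositions \ref{localformprop} and \ref{localexistenceprop_preliminaries} with the initial setup of Lemmas \ref{initialdata_setup_metric_phi_lemma} and \ref{initialdata_setup_bfield_lemma}, propagate the gauges via Lemma \ref{gaugefieldinitialvanishinglemma} and Proposition \ref{bfieldgaugefield_sourcefreewaveeq_prop}, and conformally transform back. Only your closing aside is slightly off: condition (1) of Lemma \ref{initialdata_setup_metric_phi_lemma} forces $\tilde{g}_{00} = -e^{-2\kappa\phi_0}$ rather than $-1$ on $\Sigma$ (which still lies in $\mathcal{C}_n$), and the paper motivates the factor $e^{\kappa\phi_0}$ by invariance under a change of Einstein frame, not by the Lorentz-matrix normalisation.
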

\begin{proof}
    We take an open contractible coordinate neighbourhood $V_\Sigma \subset \Sigma$ of $p$, so that we can write $\xi_0 = \extd^\Sigma \phi_0$ for some $\phi_0 \in C^\infty(V_\Sigma)$. We consider as initial data $(g_0,k,H_0,h_0,\phi_0,\phi_1)$ with $\phi_1 = x_0$. We transform this data to Einstein frame initial data $\tilde{\mathcal{I}} = (\tilde{g}_0,\tilde{k},\tilde{H}_0,\tilde{h}_0, \tilde{\phi}_0, \tilde{\phi}_1)$ according to the formulas from Lemma \ref{initialdata_einsteinframe_lemma}. We define on $\reals \times V_\Sigma$ the background fields $\bar{g}_{\tilde{\mathcal{I}}}$ and $\bar{H}_{\tilde{\mathcal{I}}}$ according to the formulas (\ref{backgroundfieldsdefeq}). Due to Proposition \ref{localformprop}, we can apply Proposition \ref{localexistenceprop_preliminaries} to obtain a globally hyperbolic development $(W,\tilde{g},B,\phi)$ of the Einstein frame initial data under the modified Einstein frame system (\ref{modifiedcombinedeinsteineqs}) with respect to the background fields $\bar{H}_{\tilde{\mathcal{I}}}$ and $\bar{g}_{\tilde{\mathcal{I}}}$. Note that Proposition \ref{localexistenceprop_preliminaries} allows us to impose on the development that, one, the component-matrix $(\tilde{g}_{\mu\nu})_{\mu\nu}$ takes values in the space of canonical Lorentz matrices $\mathcal{C}_n$ and, two, it satisfies on $W \cap \Sigma$ the initial conditions from Lemmas \ref{initialdata_setup_metric_phi_lemma} and \ref{initialdata_setup_bfield_lemma} with initial vanishing condition for the $B$-field. In particular, $\restr{\mathcal{D}[\tilde{g},\bar{g}]}{W\cap\Sigma} = 0$.
    
    By construction, the development $(\tilde{g}, B, \phi)$ is such that Lemma \ref{gaugefieldinitialvanishinglemma} applies. It follows that $\restr{\nabla \mathcal{D}[\tilde{g},\bar{g}]}{W\cap\Sigma} = 0$ and $\restr{\extd\mathcal{C}[\tilde{g}, B, \bar{g}]}{W\cap\Sigma} =0$. From Proposition \ref{bfieldgaugefield_sourcefreewaveeq_prop}, we see that the gauge conditions $\mathcal{D} = 0$ and $\extd \mathcal{C} = 0$ are implemented on $W$. Therefore, with $H =\bar{H}_{\tilde{\mathcal{I}}} + \extd B$, $(W,\tilde{g},H,\phi)$ solves the unmodified Einstein frame system (\ref{einsteinframecombinedeinsteineqs}). Noting that conformal transformations preserve global hyperbolicity, we see that a transformation of $(\tilde{g},H,\phi)$ to the string frame yields the desired development $(g, H, \xi)$, where $\xi = \extd \phi$.
\end{proof}
The next Lemma, a local uniqueness result in our setting, is built on a local uniqueness result established (implicitly) in the proof of  \cite[Theorem 14.2.]{cauchy}, presented in the preliminary Section \ref{existence_ghd_preliminaries} as Proposition \ref{uniquesoldomainprop_preliminaries}.
\begin{lemma}\label{uniquesoldomainlemma}
    Let $\mathcal{I}$ be string frame initial data on $\Sigma$. For $i=1,2$, let $W_i \subset O_i \subset M = \reals \times \Sigma$ be open subsets such that $W_1 \cap W_2 \cap \Sigma \neq \emptyset$ is simply connected, and denote $\Sigma_i \coloneqq W_i \cap \Sigma$. Assume that we have coordinates on $O_i$ that are adapted to $\Sigma$. Assume also that on $W_i$ we have string frame developments $(g_i,H_i,\xi_i)$ of the initial data satisfying conditions (1) and (2) from Lemma \ref{localexistencelemma}. Denoting by $\phi_i$ and $B_i$ the associated potentials, and by $\tilde{g}_i$ the associated Einstein frame metrics, assume that we have smooth continuations of $(\tilde{g}_i,B_i,\phi_i)$ to $O_i$, and assume that $O_i$ is convex with respect to $\tilde{g}_i$ and that $\Sigma_i$ is spacelike Cauchy in $(W_i,\tilde{g}_i)$. Assume finally that $\closure{W}_i$ is compact and contained in $O_i$, $i=1,2$.
    
    Then $(g_1,H_1,\xi_1) = (g_2,H_2,\xi_2)$ on the whole intersection $W_1 \cap W_2$.
\end{lemma}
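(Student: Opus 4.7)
The natural strategy is to reduce the problem to an application of the PDE uniqueness result Proposition \ref{uniquesoldomainprop_preliminaries} for the Einstein frame tuples $(\tilde{g}_i, B_i, \phi_i)$, which by Proposition \ref{localformprop} solve a quasi-linear hyperbolic system with principal symbol given by the inverse of the Einstein frame metric and whose non-linearity is determined by the background fields $\bar{g}_{\tilde{\mathcal{I}}_i}$ and $\bar{H}_{\tilde{\mathcal{I}}_i}$. The two immediate obstructions to a direct application are (a)~the two developments may a priori live in different Einstein frames, in which case the two systems that $(\tilde{g}_1, B_1, \phi_1)$ and $(\tilde{g}_2, B_2, \phi_2)$ solve are \emph{not literally the same PDE}, and (b)~we need the restriction of $(u_i, \partial u_i)$, $u_i = (\tilde{g}_i, B_i, \phi_i)$, to $\Sigma_1 \cap \Sigma_2$ to coincide.

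To deal with (a), observe that both developments induce on $\Sigma_1\cap \Sigma_2$ the same string frame initial data, so in particular $\extd^\Sigma \phi_i = \xi_0$ and $N_i(\phi_i) = x_0$ on $\Sigma_1\cap\Sigma_2$, where $N_i$ is the unit normal with respect to $g_i$. Because $\phi_1$ and $\phi_2$ are then two dilaton potentials for $\xi = \xi_1 = \xi_2$ on the simply connected intersection $W_1\cap W_2 \cap \Sigma$, they differ there by a constant $c\in\reals$. I will apply a change of Einstein frame with that constant to the second development, replacing $(\tilde{g}_2, B_2, \phi_2)$ by $(e^{-2\kappa c}\tilde{g}_2, B_2, \phi_2 - c)$. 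By Lemma \ref{dilaton_gaugetrafo_lemma}, this new tuple still satisfies DeTurck's gauge, the generalised Lorenz gauge, and the initial conditions of Lemmas \ref{initialdata_setup_metric_phi_lemma} and \ref{initialdata_setup_bfield_lemma} (with respect to the shifted Einstein frame initial data). Crucially, the construction (\ref{backgroundfieldsdefeq}) of the background fields depends only on the Einstein frame initial data, so after this shift the Einstein frame initial data of both developments agree on $\Sigma_1\cap\Sigma_2$, and thus the background fields $\bar{g}_{\tilde{\mathcal{I}}_i}$ and $\bar{H}_{\tilde{\mathcal{I}}_i}$ coincide on $W_1 \cap W_2$. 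Therefore $u_1$ and $u_2$ now solve \emph{the same} modified Einstein frame system on $W_1 \cap W_2$.

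For (b), the uniqueness assertions in Lemmas \ref{initialdata_setup_metric_phi_lemma} and \ref{initialdata_setup_bfield_lemma} show that, given the (now identical) Einstein frame initial data and background fields, the initial values of $\tilde{g}_i$, $B_i$, $\phi_i$ \emph{together with their first time derivatives} on $\Sigma_1 \cap \Sigma_2$ are fixed. Combined with the fact that tangential derivatives along $\Sigma_1\cap\Sigma_2$ are obviously determined by the restriction of $u_i$ to the hypersurface, this yields $u_1 = u_2$ and $\partial u_1 = \partial u_2$ on $\Sigma_1\cap\Sigma_2$. With the convexity and compactness hypotheses that are imposed precisely to match those of Proposition \ref{uniquesoldomainprop_preliminaries}, that proposition applies and yields $u_1 = u_2$ on $W_1\cap W_2$, i.e.\ $\tilde{g}_1 = e^{-2\kappa c}\tilde{g}_2$, $B_1 = B_2$, and $\phi_1 = \phi_2 - c$. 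Finally, transforming back to the string frame,
\[ g_1 = e^{2\kappa \phi_1}\tilde{g}_1 = e^{2\kappa(\phi_2-c)} e^{-2\kappa c}\cdot e^{2\kappa c}\tilde{g}_2 = e^{2\kappa \phi_2}\tilde{g}_2 = g_2, \]
and similarly $H_1 = \bar{H}_{\tilde{\mathcal{I}}_1} + \extd B_1 = \bar{H}_{\tilde{\mathcal{I}}_2} + \extd B_2 = H_2$ and $\xi_1 = \extd \phi_1 = \extd \phi_2 = \xi_2$, as claimed. The main conceptual point, and the only step that is not purely mechanical, is the frame-matching via Lemma \ref{dilaton_gaugetrafo_lemma}, which makes the dependence of the background fields on the choice of dilaton potential compatible with the uniqueness result for a single PDE.
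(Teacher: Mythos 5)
Your proposal is correct and follows essentially the same route as the paper's proof: match the two Einstein frames by the constant difference of the dilaton potentials on the simply connected intersection, invoke Lemma \ref{dilaton_gaugetrafo_lemma} to preserve the gauge and initial conditions, and then apply Proposition \ref{uniquesoldomainprop_preliminaries}. The only blemish is a sign inconsistency in your final display (with $\hat{\phi}_2 = \phi_2 - c$ the shifted metric should be $e^{2\kappa c}\tilde{g}_2$, not $e^{-2\kappa c}\tilde{g}_2$, and the factor $e^{-2\kappa c}\cdot e^{2\kappa c}$ inserted there cancels to $1$), which does not affect the argument.
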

\begin{proof}
    By assumption, $\restr{\xi_1}{\Sigma_1 \cap \Sigma_2} =\restr{\xi_2}{\Sigma_1 \cap \Sigma_2} $. We define $c = \restr{[\phi_1 - \phi_2]}{\Sigma_1 \cap \Sigma_2}$. Then $\extd^\Sigma c = \restr{[\xi_1 - \xi_2]}{\Sigma_1\cap\Sigma_2} = 0$, hence we can view $c$ as a real number. We set $\hat{\phi}_1 = \phi_1$ and $\hat{\phi}_2 = \phi_2 + c$, and denote by $\hat{g}_i$ the Einstein frame metric associated to the solution $(g_i,B_i,\hat{\phi}_i)$. 
    
    Due to the invariance of the initial conditions from Lemmas \ref{initialdata_setup_metric_phi_lemma} and \ref{initialdata_setup_bfield_lemma} under a change of Einstein frame, cf.\ Lemma \ref{dilaton_gaugetrafo_lemma}, we see that $(\hat{g}_1,B_1,\hat{\phi}_1)$ and $(\hat{g}_2,B_2,\hat{\phi}_2)$ take the same initial values on $\Sigma_1 \cap \Sigma_2$. Due to the invariance of the gauge conditions $\mathcal{D} = 0$ and $\extd\mathcal{C} = 0$ under a change of Einstein frame,  cf.\ Lemma \ref{dilaton_gaugetrafo_lemma}, $(\hat{g}_1,B_1,\hat{\phi}_1)$ and $(\hat{g}_2,B_2,\hat{\phi}_2)$ are both solutions of the modified GEE (\ref{modifiedcombinedeinsteineqs}). Because conformal transformations with constant conformal factor leave geodesics invariant, $(O_2,\hat{g}_2)$ is convex. Thus, we are in the setting to apply Proposition \ref{uniquesoldomainprop_preliminaries}. The result follows. 
\end{proof}
With these results, we establish the existence of a globally hyperbolic development in our setting, following again the ideas of \cite[Theorem 14.2.]{cauchy}.
\begin{theorem}\label{genEinsteinlocalexistencethm}
    Let $(\Sigma, g_0, k, H_0, h_0, \xi_0, x_0)$ be initial data for the string frame GEE (\ref{combinedgeneinsteineqs}). Then there exists a globally hyperbolic string frame development of the data.
\end{theorem}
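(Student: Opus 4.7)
The plan is to patch together the local string frame developments provided by Lemma \ref{localexistencelemma}, using Lemma \ref{uniquesoldomainlemma} to verify compatibility on overlaps and Lemma \ref{gh_union_lemma_preliminaries} to conclude that the resulting object is globally hyperbolic.

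First, I would fix a countable \emph{good cover} $\{U_\alpha\}_{\alpha \in A}$ of $\Sigma$ by relatively compact coordinate neighbourhoods such that every non-empty finite intersection is simply connected; such a cover exists, e.g., by choosing geodesically convex neighbourhoods of an auxiliary Riemannian metric. On each $U_\alpha$, closedness of $\xi_0$ together with simple connectedness yields a dilaton potential $\phi_0^\alpha \in C^\infty(U_\alpha)$ with $\extd^\Sigma \phi_0^\alpha = \restr{\xi_0}{U_\alpha}$. For each $p \in \Sigma$ I fix an index $\alpha(p)$ with $p \in U_{\alpha(p)}$ and apply Lemma \ref{localexistencelemma} (with initial data derived using $\phi_0^{\alpha(p)}$ and $\phi_1 = x_0$) to obtain an open neighbourhood $W_p^0 \subset \reals \times U_{\alpha(p)}$ of $p$ carrying a string frame development $(g_p, H_p, \xi_p)$ satisfying properties~(1) and~(2) of that Lemma. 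By further shrinking I choose an open $W_p \subset W_p^0$ with $\overline{W_p}$ compact and contained in $W_p^0$, such that $W_p$ is geodesically convex with respect to the associated Einstein frame metric $\tilde{g}_p$ and $W_p \cap \Sigma$ is a spacelike Cauchy hypersurface in $(W_p, \tilde{g}_p)$.

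Next, I check compatibility on overlaps. Given $p, q \in \Sigma$ with $W_p \cap W_q \neq \emptyset$, the good-cover condition ensures (after possibly further shrinking) that $W_p \cap W_q \cap \Sigma$ is simply connected. Taking $O_1 = W_p^0$ and $O_2 = W_q^0$ with their respective product coordinates, the hypotheses of Lemma \ref{uniquesoldomainlemma} are all met, and the Lemma yields $(g_p, H_p, \xi_p) = (g_q, H_q, \xi_q)$ on $W_p \cap W_q$. Gluing produces a smooth solution $(g, H, \xi)$ on $D \coloneqq \bigcup_{p \in \Sigma} W_p \subset M = \reals \times \Sigma$ that satisfies the string frame GEE and induces the prescribed initial data on $\Sigma$. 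To conclude, I invoke Lemma \ref{gh_union_lemma_preliminaries} with the coordinate function $t \colon M \to \reals$ projecting onto the first factor. Property~(2) of Lemma \ref{localexistencelemma} guarantees that $\grad t$ is $\tilde{g}_p$-timelike on each $W_p$, hence also $g_p$-timelike because $g_p = e^{2\kappa \phi_p} \tilde{g}_p$ and causal character is conformally invariant. Similarly, each $W_p \cap \Sigma$ is Cauchy in $(W_p, g_p)$ because global hyperbolicity with a spacelike Cauchy hypersurface is preserved under conformal rescaling by a smooth positive function. Lemma \ref{gh_union_lemma_preliminaries} then gives that $(D, g)$ is globally hyperbolic with Cauchy hypersurface $D \cap \Sigma = \Sigma$, completing the construction of the desired development $(D, g, H, \xi)$.

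The main obstacle is the compatibility step: unlike in the Einstein-scalar setting of \cite{cauchy}, the dilaton potential is only defined up to a locally constant shift, so the Einstein frame developments built on overlapping patches generically disagree and cannot be compared directly. Lemma \ref{dilaton_gaugetrafo_lemma} provides the required invariance of the DeTurck and generalised Lorenz gauges under a change of Einstein frame, and this invariance is precisely what is exploited in the proof of Lemma \ref{uniquesoldomainlemma}. It is this mechanism that forces the good-cover requirement: the constant relating two overlapping dilaton potentials must be well-defined on $W_p \cap W_q \cap \Sigma$, which in turn demands that this overlap be connected and simply connected.
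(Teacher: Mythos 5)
Your strategy is exactly the paper's: construct local string frame developments via Lemma \ref{localexistencelemma}, use the Einstein-frame gauge invariance of Lemma \ref{dilaton_gaugetrafo_lemma} through Lemma \ref{uniquesoldomainlemma} to force agreement on overlaps, and conclude global hyperbolicity of the union from Lemma \ref{gh_union_lemma_preliminaries}. You also correctly identify the one genuinely new difficulty relative to the Einstein--scalar-field case, namely the locally constant ambiguity in the dilaton potential.

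There are, however, two places where your verification of the hypotheses of Lemma \ref{uniquesoldomainlemma} does not go through as written. First, that lemma requires the \emph{ambient} sets $O_i$ to be convex with respect to $\tilde{g}_i$ (this is where the local uniqueness argument of Proposition \ref{uniquesoldomainprop_preliminaries} lives), whereas you impose geodesic convexity on the smaller sets $W_p$ and then take $O_1 = W_p^0$, $O_2 = W_q^0$; the neighbourhoods $W_p^0$ produced by Lemma \ref{localexistencelemma} carry no convexity property. The fix is the three-level nesting $W_p \subset O_p \subset W_p^0$ with $O_p$ chosen $\tilde{g}_p$-convex and $\closure{W}_p$ compact in $O_p$, which is what the paper does. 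Second, simple connectedness of $W_p \cap W_q \cap \Sigma$ does not follow from $U_{\alpha(p)} \cap U_{\alpha(q)}$ being simply connected, since an open subset of a simply connected set need not be simply connected, and your parenthetical \enquote{after possibly further shrinking} cannot be executed pointwise without disturbing all other overlaps. Instead one must impose the good-cover property directly on the family $(W_p \cap \Sigma)_{p\in\Sigma}$, e.g.\ by taking each $W_p \cap \Sigma$ geodesically convex with respect to a fixed auxiliary Riemannian metric on $\Sigma$ so that pairwise intersections are convex, hence simply connected. Both are bookkeeping repairs rather than missing ideas, but as stated the appeal to Lemma \ref{uniquesoldomainlemma} is not justified.
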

\begin{proof}
    From Lemma \ref{localexistencelemma}, we know that around every point $p \in \Sigma$ we can construct a local string frame development with an associated Einstein frame development satisfying initial and gauge conditions determining it uniquely, and which takes metric values in $\mathcal{C}_n$. To patch the local solutions together, we associate to every point $p \in \Sigma$ multiple neighbourhoods so as to arrive in the setting of Lemma \ref{uniquesoldomainlemma}.
    
    First, let $(U^\Sigma_p,x^m_p)$ be a coordinate neighbourhood of $p$ in $\Sigma$, so that we obtain coordinates $x_p^\mu$ on $\reals \times U^\Sigma_p \subset M$, where $x_p^0 = t$. Then, we choose $W'_p \subset \reals \times U^\Sigma_p$ with string frame development $(g_p, H_p, \xi_p)$ and associated Einstein frame development $(\tilde{g}_p,H_p,\phi_p)$ as provided by Lemma \ref{localexistencelemma}. In particular, we ask the development to satisfy the additional demands (1) and (2) from Lemma \ref{localexistencelemma}. We choose $O_p \subset W_p'$ convex with respect to $\tilde{g}_p$. Finally, we choose an open subset $W_p \subset O_p$ with the properties that, one, its compact closure is contained in $O_p$, two, $(W_p,\tilde{g}_p)$ has $\Sigma_p \coloneqq W_p \cap \Sigma$ as Cauchy hypersurface, three, $(\Sigma_q)_{q\in\Sigma}$ is a good open cover of $\Sigma$ in the sense that the intersection of any two of its members is simply connected.
    
    From the local data above, one can patch together a development on $D \coloneqq \cup_{p\in \Sigma} W_p$. By assumption, $\Sigma_p\cap\Sigma_q$ is simply connected for all $p,q \in \Sigma$, and conditions (1) and (2) from Lemma \ref{localexistencelemma} hold. Hence, it follows from Lemma \ref{uniquesoldomainlemma} that the constructed solutions agree on $W_p \cap W_q$. We obtain a well-defined string frame solution $(g,H,\xi)$ on $D$. By Lemma \ref{gh_union_lemma_preliminaries}, because for all $p \in \Sigma$ the local development $(W_p,g_p)$ is globally hyperbolic and $g_p$ takes values in $\mathcal{C}_n$, $(D,g)$ is globally hyperbolic with Cauchy hypersurface $\Sigma$. Thus, with the embedding $i \colon \Sigma \hookrightarrow M$ given by $p \mapsto (0,p)$, we have that $(g,H,\xi)$ is a globally hyperbolic string frame development of the initial data $(g_0,k,H_0,h_0,\xi_0,x_0)$ on $\Sigma$. 
\end{proof}

\subsection{Local Geometric Uniqueness}\label{uniqueness_ghd_section}

In this section, we prove local geometric uniqueness of string frame developments, following the proof strategy of \cite[Theorem 14.3]{cauchy}. That is, we show that any two developments are extensions of a common development. The proof relies on comparing any given development $(M',g',H',\xi')$ with the development $(D,g,H,\xi)$ constructed in Theorem \ref{genEinsteinlocalexistencethm}. 

In the construction of the development on $D$, one defines in a local Einstein frame background fields $\bar{g}$ and $\bar{H}$ (cf.\ (\ref{backgroundfieldsdefeq})), and decomposes $H = \bar{H} + \extd B$. Crucially, $(g,B,\phi)$ are constructed such that they satisfy the gauge conditions $\mathcal{D} = 0$ and $\extd\mathcal{C}=0$ and hence solve the modified system (\ref{modifiedcombinedeinsteineqs}) whose solutions we know to be locally unique by Proposition \ref{uniquesoldomainprop_preliminaries}. Thus, the idea is to locally relate $(M',g',H',\phi')$ to $(D,g,H,\phi)$ by a diffeomorphism $f\colon D \supset W \to W' \subset M'$ and then decompose $f^*H' = \bar{H} + \extd\hat{B}'$ such that \[ (\hat{g}',\hat{B}',\hat{\phi}') = (f^*g,\hat{B}',f^*\phi)\]
implements the gauge conditions $\mathcal{D}[\hat{g}',\bar{g}] =0$ and $\extd\mathcal{C}[\hat{g}',\hat{B}',\bar{g}]=0$. Then, from the aforementioned uniqueness property, we can conclude that our two solutions coincide. In a final step, one has to combine the local diffeomorphisms to a global one.

Before we carry out the full construction, we focus on the existence of a diffeomorphism $f$ and a decomposition $f^*H' = \bar{H} + \extd \hat{B}'$ as above. 

Recall that by Proposition \ref{deturckgauge_prop_preliminaries}, one can locally find a diffeomorphism $f$ implementing the DeTurck gauge condition $\mathcal{D}[f^*g,\bar{g}]$, as well as the initial conditions on the metric discussed in Lemma \ref{initialdata_setup_metric_phi_lemma}. We now prove a similar result for the $B$-field. That is, we show that by a closed $B$-field transformation, one can implement on any given $B$-field the generalised Lorenz gauge $\extd\mathcal{C}=0$ as well as the initial conditions from Lemma \ref{initialdata_setup_bfield_lemma}.
\begin{lemma}\label{bfieldgaugelemma}
    Let $(M,g)$ be a globally hyperbolic smooth spacetime, $B\in \Omega^2(M)$ a two-form, and $\Sigma$ a smooth spacelike hypersurface.  Denote the future-pointing unit normal on $\Sigma$ by $N$. Let $\bar{g}$ be a second Lorentzian metric on $M$, the background metric. Let $B_0 \in \Omega^2(\Sigma)$ be a two-form on $\Sigma$ such that $\extd^\Sigma(B^\parallel - B_0) = 0$, and denote $B_1 = (\extd B)(N) -  k\cdot B_0 \in \Omega^2(\Sigma)$.
    
    Then, there exists a closed two-form $\beta \in \Omega^2_{\mathrm{cl}}(M)$ such that the gauge transformed quantity $\hat{B} = B + \beta$ satisfies the generalised Lorenz gauge $\extd\mathcal{C}[g,\hat{B}] = 0$ and the initial conditions from Lemma \ref{initialdata_setup_bfield_lemma}.
\end{lemma}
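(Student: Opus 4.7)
The plan is to construct $\beta$ by solving a linear wave equation on $(M,g)$ with initial data on $\Sigma$ carefully tailored to simultaneously enforce (a) the initial conditions of Lemma \ref{initialdata_setup_bfield_lemma}, (b) the initial vanishing of $\extd\beta$, and (c) the propagation of the generalised Lorenz gauge. The key observation is that the modified Hodge wave operator $\hat{\Box}_{\mathrm{Hd}} = -\extd^*\extd - \extd\extd^*_{g,\bar{g}}$ is, by Proposition \ref{localformprop}, hyperbolic with principal symbol $g^{\mu\nu}$, so the initial value problem $\hat{\Box}_{\mathrm{Hd}}\beta = \extd\mathcal{C}[g,B]$ is well-posed on $(M,g)$ (via a patching argument based on Proposition \ref{localexistenceprop_preliminaries}).

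First I would prescribe initial data for $\beta$ in direct analogy with Lemma \ref{initialdata_setup_bfield_lemma}. Writing $\restr{\beta}{\Sigma} = \beta_0^\parallel - N^\flat \wedge b_0^\beta$ and $\restr{\nabla_N \beta}{\Sigma} = \beta_1^\parallel - N^\flat \wedge b_1^\beta$, I set $\beta_0^\parallel = B_0 - B^\parallel$, $b_0^\beta = -B^\perp$, $\beta_1^\parallel = B_1 - (\nabla_N B)^\parallel$, and choose $b_1^\beta$ algebraically — as in the proof of Lemma \ref{initialdata_setup_bfield_lemma} — so that $(\mathcal{C}[g, B+\beta])^\parallel = 0$ on $\Sigma$. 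With these choices, $\hat{B} = B + \beta$ satisfies conditions (1)–(3) of Lemma \ref{initialdata_setup_bfield_lemma} on $\Sigma$ by construction. A direct application of Lemma \ref{hypersurfacepotfieldsrelationlemma} to $\beta$ then confirms $\extd\beta|_\Sigma = 0$: the tangential part of $(\extd\beta)|_\Sigma$ equals $\extd^\Sigma(B_0 - B^\parallel)$, which vanishes by hypothesis, while the normal part collapses to $B_1 - (\extd B)(N) + k \cdot B_0$ (modulo the sign conventions in the definition of $k\cdot$), which vanishes by the defining relation $B_1 = (\extd B)(N) - k \cdot B_0$.

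Next I would solve $\hat{\Box}_{\mathrm{Hd}} \beta = \extd \mathcal{C}[g,B]$ on $M$ with the above initial data, obtaining $\beta \in \Omega^2(M)$. To see that $\beta$ remains closed, set $\omega \coloneqq \extd \beta$ and apply $\extd$ to the equation; using $\extd^2 = 0$ on both sides, one finds $\extd\extd^*\omega = 0$, and since $\extd\omega = 0$ automatically, this is exactly $\Box_{\mathrm{Hd}}\omega = 0$. Since $\omega|_\Sigma = 0$ by the previous step, an argument parallel to that of Proposition \ref{bfieldgaugefield_sourcefreewaveeq_prop} — invoking Theorem \ref{coexact_wave_eqs_main_thm} — forces $\omega = 0$ on all of $M$. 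With $\beta$ closed, the wave equation reduces to $-\extd\extd^*_{g,\bar{g}}\beta = \extd\mathcal{C}[g,B]$, which is precisely $\extd \mathcal{C}[g, B + \beta] = 0$, as required.

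The hard part will be the final propagation step, specifically verifying the vanishing $\nabla_N\omega|_\Sigma = 0$ needed to apply uniqueness of the form wave equation for $\omega$. This does not follow from $\omega|_\Sigma = 0$ alone; rather, $\nabla_N\omega|_\Sigma$ must be expressed through the wave equation for $\beta$ (which determines $\nabla_N^2 \beta|_\Sigma$ algebraically), and then checked to vanish as a consequence of our choices for $(\beta_1^\parallel, b_1^\beta)$ together with the initial gauge condition $(\mathcal{C}[g,\hat{B}])^\parallel = 0$. This is the direct analogue of the constraint computation carried out in Lemma \ref{gaugefieldinitialvanishinglemma} and is what ultimately ties our data prescription to the propagation of closedness under $\hat{\Box}_{\mathrm{Hd}}$.
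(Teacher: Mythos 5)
Your strategy coincides with the paper's: solve the linear wave equation $\hat{\Box}_{\mathrm{Hd}}\beta = \extd\mathcal{C}[g,B]$ (the paper writes this as $\Box_{\mathrm{Hd}}\beta - \extd[F(\beta)] = \extd\extd^*_{g,\Bar{g}}B$ with $F = \extd^*_{g,\Bar{g}} - \extd^*$, which is the same operator), prescribe initial data for $\beta$ so that $\hat{B}=B+\beta$ meets the conditions of Lemma \ref{initialdata_setup_bfield_lemma} and $\restr{\extd\beta}{\Sigma}=0$, apply $\extd$ to get the homogeneous equation $\Box_{\mathrm{Hd}}(\extd\beta)=0$, conclude $\extd\beta\equiv 0$, and observe that the equation then collapses to $\extd\mathcal{C}[g,\hat{B}]=0$. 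Two remarks. First, on the existence step: the equation for $\beta$ is \emph{linear} on the fixed globally hyperbolic background $(M,g)$, so the right tool is the global linear theory (Theorem \ref{tensorwaveeqthm}, after rewriting $\hat{\Box}_{\mathrm{Hd}}$ as $\Box_g$ plus lower-order terms), which is what the paper uses; your proposed route through the quasilinear local result Proposition \ref{localexistenceprop_preliminaries} plus patching is unnecessary and would only produce $\beta$ on a neighbourhood of $\Sigma$ rather than the global $\beta\in\Omega^2_{\mathrm{cl}}(M)$ asserted in the statement. Second, the step you flag as the hard part does close, and by exactly the mechanism you describe: since $\omega=\extd\beta$ is closed and vanishes on $\Sigma$, Lemma \ref{extdhypersurflemma} gives $(\nabla_N\omega)^\parallel=0$ for free, while the remaining normal component equals $(\extd^*\extd\beta)^\parallel$, which by the wave equation is $\mp\,\extd^\Sigma\big(\mathcal{C}[g,\hat{B}]^\parallel\big)$ and hence vanishes by condition (3) of Lemma \ref{initialdata_setup_bfield_lemma}. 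The paper's own proof leaves this first-order verification implicit when it invokes Theorem \ref{tensorwaveeqthm}, so your explicitness here is an improvement rather than a gap.
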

\begin{proof}
    Let us define for some $\beta \in \Omega^2_{\mathrm{cl}}(M)$
    \begin{equation*}
        \hat{B} \coloneqq B + \beta, \qquad\quad \hat{\mathcal{C}} \coloneqq -\extd^*_{g,\Bar{g}} \hat{B}
    \end{equation*}
    Then $\extd\hat{\mathcal{C}} = 0$ is equivalent to
    \begin{equation}\label{gaugeconditioneq}
        \begin{split}
            0 &= \extd \extd^*_{g,\Bar{g}} \hat{B} = \extd \extd^*_{g,\Bar{g}}  B + \extd \extd^*_{g,\Bar{g}} \beta \\
            & =  \extd \extd^* \beta + \extd [F(\beta)] + \extd\extd^*_{g,\Bar{g}}  B
        \end{split}
    \end{equation}
    where $F\in \Gamma(\Hom(\Lambda^2 T^*M, T^*M))$, explicitly $F(\beta) = (\extd^*_{g,\Bar{g}}- \extd^*)\beta$. We claim that we obtain a solution to (\ref{gaugeconditioneq}), if we define $\beta$ to be, as provided by Theorem \ref{tensorwaveeqthm}, a solution to the following equation:
    \begin{equation*}
        0 = \Box_{\mathrm{Hd}}\beta - \extd [F(\beta)] - \extd\extd^*_{g,\Bar{g}}  B
    \end{equation*}
    Assume that we manage to set up initial conditions for $\beta$ such that $\extd \beta = 0$ on $\Sigma$. Taking the exterior derivative of this equation, we then find that
    \begin{equation*}
        0 = -\extd \extd^*\extd \beta = \Box_{\mathrm{Hd}} \extd \beta
    \end{equation*}
    By Theorem \ref{tensorwaveeqthm}, $\extd \beta \equiv 0$ globally, so that indeed $\beta$ solves \ref{gaugeconditioneq}.
    
    Finally, we note that by Lemma \ref{initialdata_setup_bfield_lemma}, the initial values of $\hat{B}$ and its time-derivative are uniquely determined by the requirements in the statement. The same then holds for $\beta$. It remains to check that this is consistent with the requirement $\extd \beta = 0$. Clearly, $(\extd\beta)^\parallel = \extd^\Sigma(\hat{B}-B)^\parallel = 0$. Also, by Lemma \ref{extdhypersurflemma} and definition of $B_1$, 
    \begin{equation*}
        (\extd \hat{B})(N) = B_1 + k \cdot B_0 = (\extd B)(N),
    \end{equation*}
    so that also $(\extd \beta)(N) = 0$.
\end{proof}
Combining the Proposition \ref{deturckgauge_prop_preliminaries} and Lemma \ref{bfieldgaugelemma}, we can prove that any given Einstein frame development of initial data on a hypersurface $\Sigma$ can be locally related to a solution on $M = \reals \times \Sigma$ with background fields as defined in (\ref{backgroundfieldsdefeq}) such that the gauge conditions $\mathcal{D} = 0$ and $\extd\mathcal{C} = 0$ as well as the conditions on the initial data from Lemmas \ref{initialdata_setup_metric_phi_lemma} and \ref{initialdata_setup_bfield_lemma} are implemented.
\begin{proposition}\label{combinedgauge_prop}
    Let $\tilde{\mathcal{I}}=(\Sigma, \tilde{g}_0, \tilde{k}, H_0, \tilde{h}_0, \phi_0, \tilde{\phi}_1)$ be initial data for the Einstein frame GEE (\ref{combinedgeneinsteineqs}). Let $(M', \tilde{g}', H', \phi')$ be an arbitrary Einstein frame development. Denote the associated embedding by $i \colon \Sigma \hookrightarrow M'$. Denote furthermore $M = \reals \times \Sigma$ and by $\bar{g} = \bar{g}_{\tilde{\mathcal{I}}}$ and $\bar{H} = \bar{H}_{\tilde{\mathcal{I}}}$ the background fields defined in (\ref{backgroundfieldsdefeq}). 

    Then, for every $p\in \Sigma$ there exist neighbourhoods $p \in W \subset M$ and $i(p) \in W' \subset M'$ and a diffeomorphism $f \colon W \to W'$ such that with $\hat{g}' = f^*\tilde{g}'$, $\hat{H}' = f^*H'$, and $\hat{\phi}' = f^*\phi'$, one can decompose $\hat{H}' = \bar{H} + \extd \hat{B}'$ such that  the gauge conditions $\mathcal{D}[\hat{g}',\bar{g}] = 0$ and $\extd\mathcal{C}[\hat{g}',\hat{B}',\bar{g}] = 0$ hold and the initial conditions from Lemmas \ref{initialdata_setup_metric_phi_lemma} and \ref{initialdata_setup_bfield_lemma} are satisfied with initial vanishing condition for the $B$-field.
\end{proposition}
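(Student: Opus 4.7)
The plan is to handle the metric/dilaton part and the $B$-field part essentially sequentially. First I will use Proposition \ref{deturckgauge_prop_preliminaries} to produce a diffeomorphism $f$ that simultaneously implements DeTurck's gauge for the metric and all the metric-side initial conditions; then I will exploit the freedom of closed $B$-field transformations, via Lemma \ref{bfieldgaugelemma}, to choose a potential $\hat{B}'$ satisfying the generalised Lorenz gauge and the $B$-field initial conditions.

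Apply Proposition \ref{deturckgauge_prop_preliminaries} with background metric $\bar{g} = \bar{g}_{\tilde{\mathcal{I}}}$ on $M = \reals\times\Sigma$ and the target spacetime $(M',\tilde{g}')$. This yields neighbourhoods $(0,p)\in W \subset M$ and $i(p)\in W' \subset M'$ and a diffeomorphism $f\colon W \to W'$ such that (i) $\mathcal{D}[f^*\tilde{g}',\bar{g}]=0$, (ii) $f_*N=N'$ on $W\cap\Sigma$, and (iii) $f|_{W\cap\Sigma}=i|_{W\cap\Sigma}$. By shrinking $W$ if necessary we may further assume that $W$ is contractible and that $(W,\hat{g}')$ is globally hyperbolic, where $\hat{g}'=f^*\tilde{g}'$. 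Set also $\hat{H}'=f^*H'$ and $\hat{\phi}'=f^*\phi'$.

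Next I verify the conditions of Lemma \ref{initialdata_setup_metric_phi_lemma}. Condition (4) is (i). Because $\bar{g}=-e^{-2\kappa\phi_0}\extd t^2+g_0$ has unit normal $N=e^{\kappa\phi_0}\partial_t$ on $\Sigma$, (ii) gives condition (1). Since $f|_\Sigma=i$ and $f$ is an isometry onto its image, $\hat{g}'|_\Sigma = i^*\tilde{g}'=\tilde{g}_0$ and the second fundamental form $\hat{k}$ on $\Sigma$ induced by $\hat{g}'$ equals the one induced by $\tilde{g}'$, which is $\tilde{k}$ because $(M',\tilde{g}',H',\phi')$ develops $\tilde{\mathcal{I}}$; this gives (2). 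Condition (3) follows the same way: $\hat{\phi}'|_\Sigma = i^*\phi' = \phi_0$ and $N(\hat{\phi}')|_\Sigma = (f_*N)(\phi')|_{i(\Sigma)} = N'(\phi')|_{i(\Sigma)} = \tilde{\phi}_1$.

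Now I construct $\hat{B}'$. Since $\hat{H}'-\bar{H}$ is closed on the contractible set $W$, the Poincaré lemma furnishes $B^0\in\Omega^2(W)$ with $\extd B^0 = \hat{H}'-\bar{H}$. Because $(M',\tilde{g}',H',\phi')$ develops $\tilde{\mathcal{I}}$ and $f|_\Sigma=i$, $\hat{H}'$ induces the data $(H_0,\tilde{h}_0)$ on $\Sigma$; by construction (cf.\ (\ref{backgroundfieldsdefeq})) $\bar{H}_{\tilde{\mathcal{I}}}$ induces the same data. Hence both the parallel and normal parts of $\extd B^0$ vanish on $\Sigma$. This means that with the choice $B_0=0\in\Omega^2(\Sigma)$ we have $\extd^\Sigma(B^{0,\parallel}-B_0)=(\extd B^0)^\parallel=0$, and then the associated $B_1 = (\extd B^0)(\hat{N})-\hat{k}\cdot B_0$ defined in Lemma \ref{bfieldgaugelemma} also vanishes. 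Applying Lemma \ref{bfieldgaugelemma} to $B^0$ on $(W,\hat{g}')$ with background $\bar{g}$ and this data produces a closed $\beta\in\Omega^2_{\mathrm{cl}}(W)$ such that $\hat{B}' \coloneqq B^0+\beta$ satisfies $\extd\mathcal{C}[\hat{g}',\hat{B}',\bar{g}]=0$ together with the initial conditions of Lemma \ref{initialdata_setup_bfield_lemma} in the vanishing form $\hat{B}'|_\Sigma=0$ and $(\nabla_{\hat{N}}\hat{B}')|_\Sigma=0$. Since $\extd\hat{B}'=\extd B^0=\hat{H}'-\bar{H}$, we get the desired decomposition $\hat{H}' = \bar{H}+\extd\hat{B}'$.

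The only delicate point is the consistency check at the initial surface that allows the choice $B_0=B_1=0$: this is where the specific design of $\bar{H}_{\tilde{\mathcal{I}}}$ in (\ref{backgroundfieldsdefeq}) is used, and where one must invoke that $f|_\Sigma=i$ so that $f^*H'$ induces on $\Sigma$ exactly the prescribed initial data. Once this is in place, Proposition \ref{deturckgauge_prop_preliminaries} and Lemma \ref{bfieldgaugelemma} do the real work.
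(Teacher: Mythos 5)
Your proposal is correct and follows essentially the same route as the paper's proof: Proposition \ref{deturckgauge_prop_preliminaries} for the DeTurck gauge and the metric/dilaton initial conditions, the Poincar\'e lemma on a contractible $W$ to produce a potential for $\hat{H}'-\bar{H}$, and Lemma \ref{bfieldgaugelemma} with $B_0=B_1=0$ (justified exactly as in the paper by the agreement of the induced initial data, giving $\restr{\extd B^0}{\Sigma}=0$). Your verification of the conditions of Lemma \ref{initialdata_setup_metric_phi_lemma} is in fact slightly more explicit than the paper's.
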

\begin{proof}
    Take $p \in \Sigma$ and a diffeomorphism $f\colon W \to W'$ such that $\mathcal{D}[f^*\tilde{g}',\bar{g}]= 0$, where $p \in W \subset M$. By Proposition \ref{deturckgauge_prop_preliminaries}, such a diffeomorphism exists and can be chosen such that $f_* \partial_t = N'$ and $\restr{f_*}{TW_\Sigma} = i_{*}$, where we set $W_\Sigma = W \cap \Sigma$ and denoted by $N'$ the future-pointing unit-normal on $i(\Sigma)$. By restricting, we can assume that $(W',\tilde{g}')$ is globally hyperbolic with Cauchy hypersurface $W' \cap \Sigma$. 
    
    Set $\hat{g}' = f^*\tilde{g}'$, $\hat{H}' = f^* H'$, and $\hat{\phi}' = f^*\phi'$. Note that the pullback of the data $(\tilde{g}',H',\phi')$ by $f$ induces on $f^{-1}\circ i(W_\Sigma) = W_\Sigma$ the pullback of the initial data $(\tilde{g}_0,\tilde{k},H_0,h_0,\phi_0,\phi_1)$ by $i^*\circ (f^{-1})^* = \idon{TW_\Sigma}$, so the initial data itself. Since also $f_*N = N'$, we see that $(\hat{g}',\hat{\phi}')$ satisfies the conditions from Lemma \ref{initialdata_setup_metric_phi_lemma}.
    
    We want to define a two-form $\hat{B}'$ that satisfies the gauge condition $\extd \mathcal{C}[\hat{g}', \hat{B}'] = 0$ and the initial conditions from Lemma \ref{initialdata_setup_bfield_lemma}. First, consider that, since $W \subset M$, $\bar{H}$ is defined on $W$. We can assume $W$ to be contractible, so that all closed forms are exact. Hence, we can find a two-form $B' \in \Omega^2(W)$ such that $\hat{H}' = \bar{H} + \extd B'$. Global hyperbolicity of $(W',\tilde{g}')$ is equivalent to global hyperbolicity of $(W,\hat{g}')$. Hence, by Lemma \ref{bfieldgaugelemma}, we can find a closed two-form $\beta'$ such that the gauge-transformed quantity $\hat{B}' \coloneqq B' + \beta'$ satisfies the gauge-condition $\extd \mathcal{C}[\hat{g}',\hat{B}'] = 0$ and the initial conditions from Lemma \ref{initialdata_setup_bfield_lemma} with $B_0 = B_1 = 0$. Note that this choice is allowed because, by $f_*N = N'$ and agreement of the initial data induced by $H$ and $\hat{H}'$, $\restr{\hat{H}'}{\Sigma} = \restr{H}{\Sigma}$ and thus $\restr{\extd B'}{\Sigma} = 0$. This finishes the proof.
\end{proof}

Finally, we can adapt \cite[Theorem 14.3]{cauchy} to our setting, following closely the proof strategy presented there. We show that any two developments of the initial data are extensions of a common development by showing that any development is an extension of a development as constructed in Theorem \ref{genEinsteinlocaluniquenessthm}.
\begin{theorem}\label{genEinsteinlocaluniquenessthm}
    Let $(\Sigma, g_0, k, H_0, h_0, \xi_0, x_0)$ be initial data for the string frame equations (\ref{combinedgeneinsteineqs}). Denote by $(D, g, H, \xi)$ a globally hyperbolic development constructed as in Theorem \ref{genEinsteinlocalexistencethm}, $D\subset M = \reals \times \Sigma$. Assume that we have another development $(M', g', H', \xi')$ with embedding $i \colon \Sigma \hookrightarrow M'$. 
    
    Then, there is a tubular neighbourhood $\tilde{D} \subset D$ of $\Sigma$ and a smooth time orientation preserving diffeomorphism $\psi \colon \tilde{D} \to \psi(\tilde{D}) \subset M'$ with $ \restr{\psi}{\Sigma} = i$ relating the two solutions, i.e.\ $\psi^*g' = g$, $\psi^* H' = H$, and $\psi^* \xi' = \xi$.
\end{theorem}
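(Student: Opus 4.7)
\textbf{Strategy.} The plan is to prove the theorem locally around every point $p \in \Sigma$ by producing a diffeomorphism $f_p$ that puts $(M',g',H',\xi')$ into the same gauge as the local pieces used to construct $(D,g,H,\xi)$, and then to patch the $f_p$'s together using the uniqueness of isometries with prescribed restriction to a Cauchy hypersurface (Lemma \ref{intersection_gh_lemma_preliminaries}).

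\textbf{Step 1: Local setup.} Fix $p \in \Sigma$. Recall that in the proof of Theorem \ref{genEinsteinlocalexistencethm}, the string frame development on a contractible neighbourhood $W_p \subset D$ of $p$ arises from a gauge-fixed Einstein frame development: there are a dilaton potential $\phi_p$ with $\xi = \extd \phi_p$, an Einstein frame metric $\tilde{g}_p = e^{-2\kappa\phi_p} g$, a two-form $B_p$ with $H = \bar{H}_{\tilde{\mathcal{I}}_p} + \extd B_p$, such that $(\tilde{g}_p, B_p, \phi_p)$ solves the modified Einstein frame system (\ref{modifiedcombinedeinsteineqs}) and satisfies the gauge conditions $\mathcal{D} = 0$ and $\extd \mathcal{C} = 0$ together with the initial conditions of Lemmas \ref{initialdata_setup_metric_phi_lemma} and \ref{initialdata_setup_bfield_lemma}. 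On $i(W_p \cap \Sigma) \subset M'$ one can choose a local potential $\phi'_p$ of $\xi'$ agreeing with $\phi_p$ under $i^{-1}$, so that $(\tilde{g}'_p, H', \phi'_p)$ with $\tilde{g}'_p = e^{-2\kappa \phi'_p} g'$ is an Einstein frame development of the Einstein frame initial data $\tilde{\mathcal{I}}_p$.

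\textbf{Step 2: Implementing the gauges.} Apply Proposition \ref{combinedgauge_prop} to this Einstein frame development, with respect to the background fields $\bar{g}_{\tilde{\mathcal{I}}_p}$ and $\bar{H}_{\tilde{\mathcal{I}}_p}$ on $M = \reals \times \Sigma$. This produces neighbourhoods $p \in W \subset M$ and $i(p) \in W' \subset M'$, and a diffeomorphism $f_p\colon W \to W'$ with $f_p|_{W\cap\Sigma} = i|_{W\cap\Sigma}$, together with a two-form $\hat{B}'_p$ with $f_p^* H' = \bar{H}_{\tilde{\mathcal{I}}_p} + \extd \hat{B}'_p$, such that $(f_p^* \tilde{g}'_p, \hat{B}'_p, f_p^* \phi'_p)$ satisfies both gauge conditions and the initial conditions of Lemmas \ref{initialdata_setup_metric_phi_lemma} and \ref{initialdata_setup_bfield_lemma} with $B_0 = B_1 = 0$.

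\textbf{Step 3: Local agreement.} Now both $(\tilde{g}_p, B_p, \phi_p)$ and $(f_p^* \tilde{g}'_p, \hat{B}'_p, f_p^* \phi'_p)$ are Einstein frame developments of the same initial data $\tilde{\mathcal{I}}_p$, satisfy the modified system (\ref{modifiedcombinedeinsteineqs}), and are fixed by the same gauge and initial conditions. Lemma \ref{uniquesoldomainlemma} (invoking Proposition \ref{uniquesoldomainprop_preliminaries} in the background) yields an open globally hyperbolic neighbourhood of $p$, which I rename $W_p$, on which the two solutions coincide. Passing back to the string frame (and using $f_p^* \xi' = \extd(f_p^* \phi'_p) = \extd \phi_p = \xi$), I obtain $f_p^* g' = g$, $f_p^* H' = H$, and $f_p^* \xi' = \xi$ on $W_p$.

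\textbf{Step 4: Patching.} Covering $\Sigma$ by such neighbourhoods, I consider overlaps $W_p \cap W_q$. On any such overlap, both $f_p$ and $f_q$ are smooth isometries from $(W_p \cap W_q, g)$ into $(M', g')$ with common restriction $i$ to $W_p\cap W_q \cap \Sigma$, and both send the future-pointing normal of $\Sigma$ to the future-pointing normal of $i(\Sigma)$ by construction (Proposition \ref{deturckgauge_prop_preliminaries}(ii) combined with the scaling by $e^{\kappa\phi_0}$). Lemma \ref{intersection_gh_lemma_preliminaries} then forces $f_p = f_q$ on $W_p \cap W_q$, so the $f_p$ glue to a smooth time-orientation-preserving diffeomorphism $\psi$ on the open tubular neighbourhood $\tilde{D} = \bigcup_p W_p$ of $\Sigma$ with $\psi|_\Sigma = i$ and $\psi^* g' = g$, $\psi^* H' = H$, $\psi^* \xi' = \xi$.

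\textbf{Main obstacle.} The delicate point is that the whole local uniqueness machinery is formulated in the Einstein frame, which only exists after choosing a local potential $\phi_p$ of $\xi$; on overlaps, the local choices differ by locally constant functions. The patching argument only works because, by Lemma \ref{dilaton_gaugetrafo_lemma}, both the gauge conditions $\mathcal{D} = 0$, $\extd\mathcal{C} = 0$ and the initial conditions of Lemmas \ref{initialdata_setup_metric_phi_lemma} and \ref{initialdata_setup_bfield_lemma} are invariant under a change of Einstein frame; this makes the diffeomorphism $f_p$ effectively a string-frame object, and renders the comparison on overlaps in Step 4 meaningful. The remaining, purely technical, point is to shrink the neighbourhoods in Steps 1--3 consistently so that global hyperbolicity and convexity hypotheses required by Proposition \ref{uniquesoldomainprop_preliminaries} and Lemma \ref{intersection_gh_lemma_preliminaries} are simultaneously satisfied; this is handled exactly as in the patching procedure of Theorem \ref{genEinsteinlocalexistencethm}.
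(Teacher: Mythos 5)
Your proposal is correct and follows essentially the same route as the paper: local gauge-fixing of the arbitrary development via Proposition \ref{combinedgauge_prop}, local agreement with the constructed development via Lemma \ref{uniquesoldomainlemma} (resting on Proposition \ref{uniquesoldomainprop_preliminaries}), and patching of the local diffeomorphisms via Lemma \ref{intersection_gh_lemma_preliminaries}. You also correctly identify the role of Lemma \ref{dilaton_gaugetrafo_lemma} in making the comparison on overlaps well defined despite the Einstein frame being only locally defined, which is precisely the point the paper's construction is designed to handle.
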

\begin{proof}
    Recall that in the construction performed in Theorem \ref{genEinsteinlocalexistencethm}, we associate to every point $p \in \Sigma$ a coordinate neighbourhood $U_p = \reals \times U_p^\Sigma$ with coordinates $(x_p^0 = t,x_p^m)$ and then an open subset $W_p' \subset U_p$ on which there is associated to the string frame development a globally hyperbolic Einstein frame development $(\tilde{g}_p, H_p, \phi_p)$ satisfying conditions (1) and (2) from Lemma \ref{localexistencelemma}. In particular, we have background fields $\bar{g}_p$ and $\bar{H}_p$ on $W_p'$. We also picked a $\tilde{g}_p$-convex subset $p \in O_p \subset W_p'$ and a globally hyperbolic neighbourhood $(W_p, \tilde{g}_p)$ with compact closure contained in $O_p$.
    \begin{center}
        \label{geom_uniqueness_fig}
        \captionsetup{type=figure}
        \includegraphics[width = 1.0\textwidth]{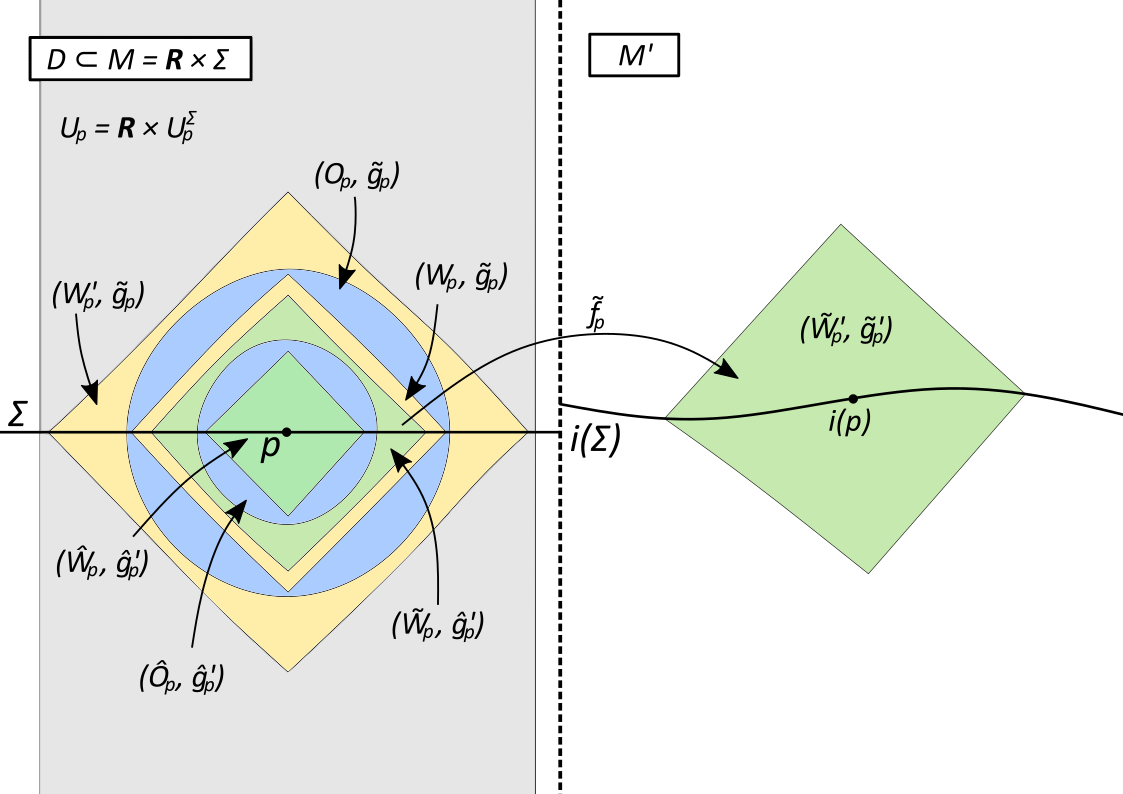}
        \captionof{figure}{An illustration of the data associated to a point $p$ of the initial hypersurface $\Sigma$ in the proof of Theorem \ref{genEinsteinlocaluniquenessthm}. On the left hand side is the gauge-fixed development $D$ of the initial data constructed in Theorem \ref{genEinsteinlocalexistencethm}. On the right hand side is an arbitrary development $M'$. The two developments are compared (in a local Einstein frame) via a diffeomorphism $\tilde{f}_p\colon \widetilde{W}_p \to \widetilde{W}'_p$.}
    \end{center}
    Let us associate additional data to every point $p \in \Sigma$. We choose a diffeomorphism $\widetilde{f}_p \colon \widetilde{W}_p \to \widetilde{W}_{p}'$ defined as in Proposition \ref{combinedgauge_prop} with the additional requirement that $\widetilde{W}_p \subset W_p$. We obtain the gauge-fixed solution $(\widehat{g}'_{p},\widehat{B}'_{p},\widehat{\phi}'_{p})$ as in that Proposition. We pick a $\widehat{g}_p'$-convex neighbourhood $\widehat{O}_{p} \subset \widetilde{W}_p$ and a subset $\widehat{W}_p \subset \widehat{O}_p$ such that, one, $(\widehat{W}_p,\widehat{g}_p')$ is globally hyperbolic with Cauchy hypersurface $\widehat{W}_p \cap \Sigma$, two, $\widehat{W}_p$ has compact closure contained in $\widehat{O}_p$, three, $\widehat{g}_p'$ takes on $\widehat{W}_p$ values in $\mathcal{C}_n$ and, four, $\widehat{W}_p \cap \Sigma$ is simply-connected. We define $f_p\coloneqq \restr{\widetilde{f}_p}{\widehat{W}_p}$. 
    
    By their construction, both $(W_p, \tilde{g}_p, B_p, \phi_p)$ and $(\widehat{W}_p, \widehat{g}_p', \widehat{B}_p',\widehat{\phi}_p')$ satisfy (1) and (2) from Lemma \ref{localexistencelemma}. Also, the intersection $W_p \cap \widehat{W}_p \cap \Sigma = \widehat{W}_p \cap \Sigma$ is simply connected. Thus, we are in the setting to apply Lemma \ref{uniquesoldomainlemma}. We conclude that $\restr{(g,H,\xi)}{\widehat{W}_p} = (f_p^*g',f_p^*H',f_p^*\xi')$, so $f_p$ is an isometry relating the string frame solutions locally.  

    To finish the proof, we have to patch the local isometries together to a global isometry. For every $p \in \Sigma$, we find a neighbourhood $\tilde{D}_p\subset W_p$ such that $(\tilde{D}_p,g)$ is globally hyperbolic with Cauchy HS $\tilde{D}_p \cap \Sigma$. Then, by Lemma \ref{intersection_gh_lemma_preliminaries}, the isometries $f_p$ and $f_q$ are determined on $\tilde{D}_p \cap \tilde{D}_q$ by the value of their differential on $\Sigma \cap \tilde{D}_p \cap \tilde{D}_q$. These agree, and hence $\restr{f_p}{\tilde{D}_p \cap \tilde{D}_q} = \restr{f_q}{\tilde{D}_p \cap \tilde{D}_q}$. We conclude that we can define a global isometry $\psi$ on $\tilde{D}= \bigcup_p \tilde{D}_p$, as desired.
\end{proof}

\subsection{The Maximal Globally Hyperbolic Development}\label{mghd_section}

In the famous work \cite{choquet1969global}, Choquet-Bruhat and Geroch established for the Einstein equations the existence of a geometrically unique globally hyperbolic development which extends any other development of a given set of initial data.\footnote{We note that many details absent from the proof in \cite{choquet1969global} are provided by Ringström in \cite[§ 23]{topology_stability_universe_ringström} in the context of the Einstein-Vlasov-nonlinear scalar field system. A text similar to \cite{topology_stability_universe_ringström}, but in the context of the Einstein-nonlinear scalar field system, is provided in Ringström's errata \cite{cauchy_errata} to his book \cite{cauchy} (which contains an erroneous proof of the existence of an MGHD). A proof that does not (in contrast to the other mentioned proofs) rely on Zorn's Lemma was given by Sbierski \cite{sbierski2016existence}.} We formalise the notion of the maximal globally hyerbolic development for the string frame GEE as follows (we adapt \cite[Definition 16.5.]{cauchy}).
\begin{definition}\label{mghd_def}
    Let $\mathcal{I} = (\Sigma,g_0,k,H_0,h_0,\xi_0,x_0)$ be initial data for the string frame GEE. A \textit{maximal globally hyperbolic development (MGHD)} of $\mathcal{I}$ is a development $(M,g,H,\xi)$ with embedding $i\colon \Sigma \hookrightarrow M$ such that for every other globally hyperbolic development $(M',g',H',\xi')$ with embedding $i'\colon \Sigma\hookrightarrow M'$ there exists a map $\psi \colon M' \to \psi(M')\subset M$ which is a time-orientation preserving diffeomorphism onto its image relating the two developments, i.e. $\psi^*g = g'$, $\psi^*H = H'$, $\psi^*\xi = \xi'$, and $\psi \circ i' = i$.
\end{definition}
If $(M,g,H,\xi)$ and $(M',g',H',\xi')$ are two developments of given initial data for which a map $\psi$ as in the definition exists, we also call $(M,g,H,\xi)$ an \textit{extension} of $(M',g',H',\xi')$. Thus, an MGHD $(M,g,H,\xi)$ is an extension of every other globally hyperbolic development.

It follows from the properties of the MGHD that it is unique up to diffeomorphism, i.e.\ for two MGHDs $(M,g,H,\xi)$ and $(M',g',H',\xi')$ one can see any map $\psi$ as provided by Definition \ref{mghd_def} to be a diffeomorphism between $M$ and $M'$. \cite{cauchy}

The explicit proof of existence for the MGHD given in \cite{choquet1969global} focuses on the Einstein vacuum system. Remarks preceding the proof explain\footnote{The same remarks apply to the more detailed proof due to Ringström mentioned in a previous footnote.} why the results extend to general Einstein matter systems for which the notions of solutions, initial data, developments, and extensions are as in (1-4) and satisfy the properties (i-iii).
\begin{enumerate}
    \item[(1)] A \textit{solution} is a tuple $(M,g,\Phi)$ consisting of an $n+1$-dimensional time-oriented Lorentzian manifold $(M,g)$, a section $\Phi \in \Gamma(E)$ of a diffeomorphism-invariant vector subbundle $E$ of the tensor bundle $\mathcal{T}M$, all such that a diffeomorphism-invariant set of conditions (the Einstein-matter equations) is satisfied.
    \item[(2)] \textit{Initial data} is a tuple $(\Sigma,g_0,k,\Phi_0)$ consisting of an $n$-dimensional Riemannian manifold $(\Sigma,g_0)$, a symmetric two-tensor $k$, and a section $\Phi_0 \in \Gamma(E_0)$ of a diffeomorphism-invariant vector subbundle of the tensor bundle $\mathcal{T}\Sigma$ satisfying a given diffeomorphism-invariant set of constraint conditions.\footnote{The constraint conditions do not have to be of a specific form; their purpose is to encode conditions necessary and sufficient to guarantee property (ii).}
    \item[(i)] A solution $(M,g,\Phi)$ naturally induces on every spacelike hypersurface $\Sigma$ a set of initial data $\mathcal{I} = (\Sigma,g_0,k,\Phi_0)$, where $g_0$ is the inherited Riemannian metric and $k$ the second fundamental form on $\Sigma$. Naturality means that, if $\psi \colon M' \to M$ is a diffeomorphism and $i\colon \Sigma \hookrightarrow M$ the embedding associated to $\mathcal{I}$, then $(M',\psi^*g,\psi^*\Phi)$ induces on $\psi(i(\Sigma))$ the initial data $(\psi(i(\Sigma)),(\psi\circ i)_*g_0,(\psi\circ i)_*k, (\psi\circ i)_*\Phi_0)$.
    \item[(3)] A \textit{development} of initial data $(\Sigma,g_0,k,\Phi_0)$ is a solution $(M,g,\Phi)$ and a Riemannian embedding $i\colon \Sigma \hookrightarrow M$ such that the pullback by $i$ of the initial data induced on $i(\Sigma)$ yields the initial data $(\Sigma,g_0,k,\Phi_0)$.
    \item[(4)] Given initial data $\mathcal{I}$, a development $(M,g,\Phi)$ of $\mathcal{I}$ with embedding $i$ is an \textit{extension} of another development $(M',g',\Phi')$ of $\mathcal{I}$ with embedding $i'$ if there exists a map $\psi\colon M' \to \psi(M') \subset M$ which is a time-orientation preserving diffeomorphism onto its image such that $\psi^*g = g'$, $\psi^*\Phi = \Phi'$, and $\psi\circ i' = i$.
    \item[(ii)] Every set of initial data admits a globally hyperbolic development.
    \item[(iii)] Any two developments of given initial data are extensions of a common development.
\end{enumerate}

We see that our notions of string frame solutions, string frame initial data (see Definition \ref{initialdata_stringframe_def}), string frame developments (see Definition \ref{development_stringframe_def}), and extensions (see above) fit the template (1-4). In our setting, property (i) is trivial, and properties (ii) and (iii) respectively correspond to Theorems \ref{genEinsteinlocalexistencethm} and \ref{genEinsteinlocaluniquenessthm}. Thus,
\begin{theorem}\label{mghd_thm}
    Let $\mathcal{I}$ be initial data for the string frame GEE. Then there exists a string frame MGHD of $\mathcal{I}$. It is unique up to diffeomorphism.
\end{theorem}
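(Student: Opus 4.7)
The plan is to deduce the theorem from the abstract MGHD-existence result of Choquet-Bruhat and Geroch \cite{choquet1969global} (or its refinements cited in the footnote). Concretely, I will verify that the string frame GEE fit the template (1)--(4) above with the properties (i)--(iii), invoke the abstract theorem to produce the MGHD, and then deduce uniqueness from its defining extension property via a standard two-sided argument.

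For the template verification, the matter fields are $\Phi = (H, \xi)$, a section of the diffeomorphism-invariant subbundle $\Lambda^3 T^*M \oplus T^*M \subset \mathcal{T}M$ subject to the diffeomorphism-invariant conditions of closedness and the GEE (\ref{combinedgeneinsteineqs}); this yields (1). Definition \ref{initialdata_stringframe_def} gives (2), since the constraint equations (\ref{genconstrainteqs}) are built from manifestly natural operations (contraction with the induced metric, intrinsic codifferential, second fundamental form). Property (i) is immediate from the description (\ref{initialdatacompatbilityeq}) of the induced data, because pullback, interior product, and the unit normal commute with diffeomorphism. Definition \ref{development_stringframe_def} is (3), and the notion of extension from Definition \ref{mghd_def} is (4). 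Property (ii) is exactly Theorem \ref{genEinsteinlocalexistencethm}. For (iii), given two developments, I would construct the specific development $(D, g, H, \xi)$ of Theorem \ref{genEinsteinlocalexistencethm} and apply Theorem \ref{genEinsteinlocaluniquenessthm} twice to relate it to each given development on a tubular neighbourhood $\tilde{D}_i \subset D$ of $\Sigma$; the restriction $(\tilde{D}_1 \cap \tilde{D}_2, g, H, \xi)$ is then a common sub-development.

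Granted these verifications, the abstract theorem produces the MGHD $(M, g, H, \xi)$ with embedding $i$. For uniqueness up to diffeomorphism, let $(M', g', H', \xi')$ be a second MGHD with embedding $i'$. Applying Definition \ref{mghd_def} twice yields time-orientation preserving isometric embeddings $\psi\colon M' \to M$ and $\psi'\colon M \to M'$ compatible with the embeddings, so $\psi\circ\psi'\colon M \to M$ is an isometric embedding restricting to the identity on the Cauchy hypersurface $i(\Sigma)$. A standard open-closed rigidity argument of the type underlying Lemma \ref{intersection_gh_lemma_preliminaries} then forces $\psi\circ\psi' = \idon{M}$, and symmetrically $\psi'\circ\psi = \idon{M'}$, so $\psi$ is the desired diffeomorphism. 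The substantive PDE and gauge-fixing work has already been carried out in the preceding two theorems, so the only genuinely new input needed is the abstract Zorn's-lemma construction of the MGHD underlying the Choquet-Bruhat/Geroch theorem; this is independent of the matter content and I would treat it as a black box using the references in the footnote.
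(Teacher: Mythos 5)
Your proposal is correct and follows essentially the same route as the paper: reduce to the Choquet-Bruhat--Geroch abstract existence result by checking the template (1)--(4) and properties (i)--(iii), with (ii) and (iii) supplied by Theorems \ref{genEinsteinlocalexistencethm} and \ref{genEinsteinlocaluniquenessthm}. Your explicit unpacking of (iii) (applying Theorem \ref{genEinsteinlocaluniquenessthm} twice and intersecting the tubular neighbourhoods) and of the two-sided uniqueness argument is exactly what the paper leaves implicit.
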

\cleardoublepage
\appendix
\section{Results on Wave Equations}

We summarise here the results on symmetric hyperbolic PDEs, also called wave equations, which we employ in our study of the initial value problem for the generalised Einstein equations. 
\subsection{Non-Linear Wave Equations}\label{nonlinear_waveeqs_appendix}

In this section, we give appropriate definitions for the discussion of and the main result on non-linear wave equations. Specifically, we discuss systems of PDEs of the form (\ref{hyperbolicpdeeq}). Recall that this is a system of PDEs for a vector valued function $u \colon \reals^{n+1} \to \reals^N$, where $n,N \in \mathbb{N}_{\geq 1}$.

Working on $\reals^{n+1}$, we employ the following notion of a canonical Lorentz matrix, cf.\ \cite[Definition 8.4.]{cauchy}. 
\begin{definition}
    Let $A \in \Sym^2(n+1)$ be a symmetric matrix. We call $A$ a \textit{canonical Lorentz matrix} if $A_{00} < 0$ and $(A_{ij})$ positive definite. We denote the space of canonical Lorentz matrices by $\mathcal{C}_n$. Finally, given $a = (a_0,a_1,a_2) \in \reals_{>0}^3$, we denote by $\mathcal{C}_{n,a}$ the space of canonical Lorentz matrices $A$ such that $A_{00} < - a_0$, $(A_{ij}) > a_1$, and $\norm{A}_1 > a_2$, where $\norm{\cdot}_1$ denotes the $1$-norm on $\reals^{(n+1)^2}$.
\end{definition}
One can show that a canonical Lorentz matrix $A$ has $n$ positive eigenvalues and one negative eigenvalue \cite[Lemma 8.3.]{cauchy}.

Recall now more specifically that (\ref{hyperbolicpdeeq}) is the following system of PDEs:
\begin{equation}\label{hyperbolicpdeeq_appendix}
    \begin{split}
        g[u]^{\mu\nu} \partial_\mu \partial_\nu u &= f[u], \\
        u(T_0, \cdot ) &= U_0, \\
        \partial_t u(T_0, \cdot) &= U_1.
    \end{split}
\end{equation}
Herein, $T_0$ some real value, $U_0, U_1 \in C_0^\infty(\reals^n,N)$ compactly supported initial data, and 
\[ g \in C^\infty(\reals^{N+(n+1)N+n+1},\mathcal{C}_n), \qquad\quad f \in C^\infty(\reals^{N+(n+1)N+n+1},\reals^N)\] 
respectively a $C^\infty$ ($N,n$)-admissible metric and a $C^\infty$ ($N,n$)-admissible non-linearity. Furthermore, we employed the notation
\begin{equation*}
    \begin{split}
        g[u](t,x) &\coloneqq g(t,x,u(t,x),\partial_0u(t,x),...,\partial_nu(t,x)), \\
        f[u](t,x) &\coloneqq g(t,x,u(t,x),\partial_0u(t,x),...,\partial_nu(t,x)).
    \end{split}
\end{equation*}
In particular, $g[u] \in C^\infty(\reals^{n+1},\mathcal{C}_n)$ and $f[u] \in C^\infty(\reals^{n+1}, \reals^N)$.

We explain now the two notions of a $C^\infty$ ($N,n$)-admissibility, cf.\ \cite[Definitions 9.1 and 9.4.]{cauchy}.
\begin{definition}
    Let $g \in C^\infty(\reals^{N+(n+1)N+n+1},\mathcal{C}_n)$. We call $g$ a \textit{$C^\infty$ ($N,n$)-admissible metric} if for all compact intervals $I \subset \reals$
    \begin{enumerate}[label = (\roman*)]
        \item there exists $a \in \reals_{>0}^3$ such that $g(t,\cdot) \in \mathcal{C}_{n,a}$ for all $t \in I$, and
        \item for all multi-indices $\alpha \in \mathbb{N}_0^{N+(n+1)N+n+1}$, there exists a continuous, monotonously increasing function $h \colon \reals \to \reals$ such that
        \begin{equation*}
            \norm{\partial^\alpha g(t,x,\xi)} \leq h(\norm{\xi}), \qquad t\in I, x \in \reals^n, \xi \in \reals^{N+(n+1)N},
        \end{equation*}
        where $\norm{\cdot}$ is the maximum norm on $\reals^k$ for appropriate $k$. 
    \end{enumerate}
\end{definition}
\begin{definition}
    Let $f \in C^\infty(\reals^{N+(n+1)N+n+1}, \reals^N)$. We call $f$ a \textit{$C^\infty$ ($N,n$)-admissible non-linearity}, if
    \begin{enumerate}[label=(\roman*)]
        \item $f[0]$ is of locally $x$-compact support, i.e.\ for all $t \in \reals$ $f[0](t)$ is of compact support, and
        \item for all compact intervals $I \subset \reals$ and all multi-indices $\alpha \in \mathbb{N}_0^{N+(n+1)N+n+1}$, there exists a continuous, monotonously increasing function $h \colon \reals \to \reals$ such that
        \begin{equation*}
            \norm{\partial^\alpha f(t,x,\xi)} \leq h(\norm{\xi}), \qquad t\in I, x \in \reals^n, \xi \in \reals^{N+(n+1)N},
        \end{equation*}
        where $\norm{\cdot}$ is the maximum norm on $\reals^k$ for appropriate $k$. 
    \end{enumerate}
\end{definition}

We can now cite \cite[Corollary 9.16.]{cauchy}, which for us is the main theorem on non-linear wave equations. Essentially verbatim, it states the following.
\begin{theorem}\label{uniqueexistencehyperbolicpdethm}
    Consider the system (\ref{hyperbolicpdeeq}), where $T_0 \in \mathbb{R}$, $g \in C^\infty(\reals^{N+(n+1)N+n+1},\mathcal{C}_n)$ a $C^\infty$ ($N,n$)-admissible metric, $f \in C^\infty(\reals^{N+(n+1)N+n+1},\reals^N)$ a $C^\infty$ ($N,n$)-admissible non-linearity, and $U_0, U_1 \in C_0^\infty(\reals^n,N)$ initial data.
    
    Then there are $T_1 < T_0 < T_2$ and a unique solution $U\in C^\infty[(T_1,T_2)\times \reals^n, \reals^N]$ to (\ref{hyperbolicpdeeq}). The solution is of locally $x$-compact support. Moreover, $T_2$ can be chosen such that $T_2 = \infty$ or
    \begin{equation*}
        \lim_{\tau \to T_2 -} \sup_{T_0\leq t \leq \tau} \sum_{\absolute{\alpha}+j \leq 2} \sup_{x\in\reals^n}\absolute{\partial^\alpha \partial^j_t u(t,x)} = \infty.
    \end{equation*}
    The statement concerning $T_1$ is similar.
\end{theorem}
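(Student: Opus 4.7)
The plan is to reduce the second-order quasi-linear system to a first-order symmetric hyperbolic system, apply the standard linear theory, and then run a Picard-type iteration to get the nonlinear local solution; finally, an energy continuation argument gives the blow-up criterion. First I would introduce auxiliary variables $v_\mu \coloneqq \partial_\mu u$ for $\mu = 0, \ldots, n$ and form the extended unknown $U = (u, v_0, \ldots, v_n)$. Using that $g[u]$ takes values in $\mathcal{C}_n$ (so in particular $g^{00} < 0$ and $(g^{ij})$ is positive definite), the second-order equation rewrites as
\[
    g^{00}\partial_0 v_0 + 2 g^{0i}\partial_i v_0 + g^{ij}\partial_i v_j = f[u],
\]
which, together with the compatibility relations $\partial_0 v_i = \partial_i v_0$ and $\partial_0 u = v_0$, puts the system in the symmetric hyperbolic form $A^0[U]\,\partial_0 U + A^i[U]\,\partial_i U = F[U]$ with $A^0$ uniformly positive definite on any set where $U$ remains bounded. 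The admissibility hypotheses on $g$ and $f$ ensure the matrix coefficients and source are smooth functions of $(t,x,U)$ whose derivatives are controlled by monotone functions of $\|U\|$.

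Next I would run a Picard iteration in a high-order Sobolev space $H^s(\reals^n)$, $s > n/2 + 2$. The iteration scheme solves at each step a \emph{linear} symmetric hyperbolic system with coefficients frozen from the previous iterate, for which the standard theory yields a unique solution and the energy estimate
\[
    \|U_{k+1}(t)\|_{H^s}^2 \leq C\bigl(\|U_{k+1}(T_0)\|_{H^s}^2 + \textstyle\int_{T_0}^t \|U_{k+1}\|_{H^s}^2\, d\tau\bigr),
\]
where $C$ depends on $\|U_k\|_{C^1}$ via the admissibility functions. Contraction in the lower norm $L^2$ (or $H^{s-1}$) together with uniform boundedness in $H^s$ yields, on a sufficiently small time interval about $T_0$, a limit $U \in C([T_0-\delta,T_0+\delta],H^s)$. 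Bootstrapping regularity by differentiating the equation and re-applying energy estimates (using that the admissibility hypotheses uniformly control all derivatives of $g$ and $f$ on any bounded set) promotes $U$ to $C^\infty$. Finite speed of propagation, which here follows from the standard domain-of-dependence estimate for symmetric hyperbolic systems with $A^0$ positive definite, together with compact support of $(U_0,U_1)$, yields that the solution has locally $x$-compact support.

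For uniqueness, given two smooth solutions $u_1,u_2$ of (\ref{hyperbolicpdeeq_appendix}) with the same initial data, I would apply the energy estimate to the first-order system satisfied by the difference $\Delta U = U_1 - U_2$. This difference solves a linear symmetric hyperbolic system (with coefficients depending on $U_1,U_2$) whose source vanishes quadratically in $\Delta U$; Grönwall then forces $\Delta U \equiv 0$ on the common existence interval. The uniqueness claim for the combined system therefore gives a unique maximal smooth solution on some interval $(T_1,T_2)\ni T_0$.

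For the continuation criterion, I would define $T_2$ as the supremum of times $T > T_0$ to which the smooth solution extends. Assume $T_2 < \infty$ and, arguing by contradiction, suppose the displayed $C^2$-norm remains bounded on $[T_0,T_2)$. The admissibility of $g$ and $f$ then bounds all coefficients of the linearised equations for $\partial^\alpha U$ uniformly on $[T_0,T_2)$, so the higher-order energy estimates close and yield uniform $H^s$ bounds for $U(t)$ as $t \to T_2-$. A standard compactness/limit argument produces a limit $U(T_2) \in H^s$ that serves as new initial data, and local existence around $T_2$ then extends the solution past $T_2$, contradicting maximality; the argument for $T_1$ is symmetric. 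The main obstacle is the bookkeeping required to close the high-order energy estimates for the quasi-linear system: one must verify that the commutators $[\partial^\alpha,\,g[u]^{\mu\nu}\partial_\mu\partial_\nu]u$ and the derivatives of $f[u]$ can both be controlled by Moser-type inequalities in terms of $\|u\|_{C^2}$ and $\|u\|_{H^s}$, which is exactly what the $(N,n)$-admissibility definitions are tailored to permit.
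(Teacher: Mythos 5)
Your proposal cannot really be measured against a proof in the paper, because the paper does not prove this statement at all: Theorem \ref{uniqueexistencehyperbolicpdethm} is quoted \enquote{essentially verbatim} from \cite[Corollary 9.16]{cauchy} and used as a black box. That said, your sketch is a correct outline of a standard proof, and it is worth comparing it with the route taken in the cited source. You reduce the second-order system to a first-order symmetric hyperbolic system in the extended unknown $(u,\partial_0 u,\dots,\partial_n u)$ and then run Picard iteration with the usual \enquote{high norm bounded, low norm contracts} scheme; Ringstr\"om instead works directly with the second-order operator $g^{\mu\nu}\partial_\mu\partial_\nu$, proving energy estimates for \emph{linear} wave equations of that form (via a stress-energy/multiplier argument) and then bootstrapping to the quasi-linear case by an analogous iteration. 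Both are standard and both deliver the local existence, uniqueness, locally $x$-compact support, and the $C^2$-type continuation criterion; the first-order reduction buys you off-the-shelf linear theory at the cost of some bookkeeping to symmetrise the coefficient matrices, while the direct second-order approach keeps the estimates closer to the geometric structure that the rest of \cite{cauchy} exploits. Two small points to tighten if you were to write this out: the source term in the equation for the difference $\Delta U$ of two solutions vanishes \emph{linearly} (not quadratically) in $\Delta U$ by the mean value theorem, which is still exactly what Gr\"onwall needs; and uniqueness is claimed among all smooth solutions on $(T_1,T_2)\times\reals^n$, which are not a priori in $H^s$ in $x$, so the energy/Gr\"onwall argument must be localised to backward light cones via the domain-of-dependence property you invoke, rather than run globally in space. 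Note also that it is precisely the $(N,n)$-admissibility conditions (uniform-in-$x$ bounds on all derivatives of $g$ and $f$ by monotone functions of $\norm{\xi}$, and locally $x$-compact support of $f[0]$) that let the Moser-type commutator estimates close, as you correctly identify at the end.
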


\subsection{Inhomogeneous Linear Tensor Wave Equations}
Given a fixed smooth Lorentzian metric $g$ on the manifold $M$, we often deal with tensor wave equations. To discuss them, it is convenient to make the following
\begin{definition}
    Given a tensor $A \in \Gamma(T^r_sM)$, we define with with $r \geq l$, $s \geq k$ the following map:
    \begin{equation*}
        A\colon \Gamma(T^k_lM)\longrightarrow \Gamma(T^{r-l}_{s-k}M), \qquad\quad A(B)_{\beta_1...\beta_{s-k}}^{\alpha_1...\alpha_{r-l}} \coloneqq A_{\delta_1...\delta_k\beta_1...\beta_{s-k}}^{\gamma_1...\gamma_l\alpha_1...\alpha_{r-l}} B_{\gamma_1...\gamma_l}^{\delta_1...\delta_k}.
    \end{equation*}
    If $l<r$ and $s <k$, we also set $A(B) \coloneqq B(A)$.

    Furthermore, given a metric $g$ and two tensors $A \in \Gamma(T_sM)$, $B\in  \Gamma(T_l M)$, w.l.o.g. $s \geq l$, we denote
    \begin{equation*}
        \scalbrack{A, B}_{\beta_1...\beta_{s-l}} \equiv \scalbrack{B,A}_{\beta_1...\beta_{s-l}} \coloneqq B^{\gamma_1...\gamma_l} A_{\gamma_1...\gamma_l\beta_1...\beta_{s-l}} \in \Gamma(T_{s-l} M).
    \end{equation*}
\end{definition}
The main result on tensor wave equations that we employ is the following. It is a combination of \cite[Theorem 12.17. and Corollary 12.12]{cauchy}.
\begin{theorem}\label{tensorwaveeqthm}
    Let $(M,g)$ be a globally hyperbolic spacetime with spacelike Cauchy hypersurface $i\colon S\hookrightarrow M$. Assume $B \in T^{r+s+1}_{r+s}(M)$, $C\in T^{r+s}_{r+s}(M)$, $E\in T^r_s(M)$. Given smooth tensor fields $A_0,A_1 \in T^r_s(M)$, there exists a unique smooth tensor field $A\in T^r_s(M)$ solving the initial value problem
    \begin{equation}\label{tensorwaveeq}
        \begin{split}
            \Box_gA + B(\nabla A) + C(A) &= E, \\
            i^*A &= i^*A_0,\\
            i^*(\nabla_N A) &= i^*A_1,
        \end{split}
    \end{equation}
    where $\Box_g = \nabla^\lambda\nabla_\lambda$ and $N$ is the future directed normal to $S$.

    Furthermore, if $E \equiv 0$, and also $i^*A_0 = i^*A_1 = 0$ on $\Omega\subset S$, then $A = 0$ on the Cauchy development $D(\Omega)$.
\end{theorem}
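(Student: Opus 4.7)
The statement has two parts — existence/uniqueness of a global smooth solution to the inhomogeneous linear tensor wave equation, and the homogeneous domain-of-dependence property — and both follow from standard second-order linear hyperbolic theory on a globally hyperbolic spacetime, once one reduces to the scalar wave-equation framework already set up in Theorem \ref{uniqueexistencehyperbolicpdethm}.

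The plan for existence and uniqueness is as follows. First, over any coordinate patch $U\subset M$ carrying a local frame of $T^r_sM$, I would expand $A$ into scalar components $A^I$ with $I$ ranging over tensor multi-indices. Because $\nabla_\mu=\partial_\mu+\Gamma\ast$ and the fiber action of $B$ and $C$ is by fixed smooth coefficients, the equation $\Box_g A+B(\nabla A)+C(A)=E$ becomes, in these components, a \emph{linear} system of the form
\[ g^{\mu\nu}\partial_\mu\partial_\nu A^I \;=\; f^I\bigl(t,x,A^J,\partial_\alpha A^J\bigr), \]
with $f^I$ affine in the arguments $(A^J,\partial_\alpha A^J)$ and smooth in $(t,x)$. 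Since the principal symbol is just the background inverse metric (independent of the unknown), this is a $C^\infty$ $(N,n)$-admissible system in the sense of Appendix \ref{nonlinear_waveeqs_appendix}, so Theorem \ref{uniqueexistencehyperbolicpdethm} yields a unique local smooth solution for any compactly supported initial data. Next, using the Bernal–Sanchez splitting $M\cong\reals\times S$ adapted to the Cauchy hypersurface, I would cover $S$ by a locally finite family of relatively compact coordinate/trivialisation patches $V_\alpha$, truncate the initial data $(A_0,A_1)$ by a partition of unity subordinate to this cover, solve the truncated problem on each $(T_1^\alpha,T_2^\alpha)\times V_\alpha$, and patch via local uniqueness together with finite propagation speed (the latter being exactly the homogeneous domain-of-dependence property; see below). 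Because the equation is \emph{linear} with globally smooth coefficients, the continuation criterion in Theorem \ref{uniqueexistencehyperbolicpdethm} rules out finite-time blow-up on compact time slabs, so the patched solution extends to all of $M$.

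For the propagation statement (and, simultaneously, for the finite speed of propagation used in the patching), I would run an energy estimate. Choose a smooth Riemannian bundle metric $\langle\cdot,\cdot\rangle_R$ on $T^r_sM$ together with a future-directed timelike vector field $T$, and define an energy current
\[ J_\mu \;=\; \langle \nabla_\mu A,\nabla_T A\rangle_R - \tfrac{1}{2}T_\mu\bigl(\langle \nabla^\lambda A,\nabla_\lambda A\rangle_R-\langle A,A\rangle_R\bigr). \]
Using $\Box_g A=-B(\nabla A)-C(A)$ and the boundedness of $B$, $C$, $\nabla T$ on any compact causal diamond, one obtains a pointwise bound $|\nabla^\mu J_\mu|\leq K(|A|_R^2+|\nabla A|_R^2)$. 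Integrating $\nabla^\mu J_\mu$ over a lens-shaped region bounded below by a subset of $\Omega\subset S$ and above by a spacelike slice inside $D(\Omega)$, and using that the lateral boundary is achronal so that the induced integrand has the favourable sign, Stokes' theorem and Grönwall's inequality force the energy (and hence $A$ and $\nabla A$) to vanish throughout $D(\Omega)$ whenever the initial data and $E$ do.

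The main obstacle is the energy estimate in the tensor-valued case: one must verify that the fiber Riemannian metric $\langle\cdot,\cdot\rangle_R$ is compatible enough with the wave operator that the lateral-boundary term in the divergence identity has the right sign. This is handled by working in a local orthonormal frame of $T^r_sM$, in which $\Box_g$ acts component-wise up to lower-order terms absorbed into $B$ and $C$; the energy estimate is then exactly the scalar wave energy estimate applied componentwise, with cross terms controlled by the Cauchy–Schwarz step in the Grönwall argument. Everything else is routine: local reduction to the form of Theorem \ref{uniqueexistencehyperbolicpdethm}, a partition-of-unity patching, and the standard energy-current calculation.
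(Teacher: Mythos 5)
The paper offers no proof of this theorem: it is stated as a direct combination of \cite[Theorem 12.17 and Corollary 12.12]{cauchy}, so there is no in-paper argument to compare against. Your sketch --- componentwise reduction in a local frame to a linear system with principal symbol given by the fixed background metric, local solvability via Theorem \ref{uniqueexistencehyperbolicpdethm}, patching by linearity and finite propagation speed, and an energy-current plus Gr\"onwall argument over lens-shaped regions for the domain-of-dependence statement --- is a correct outline of the standard proof and is essentially the route taken in the cited reference. The only step you pass over quickly is that Theorem \ref{uniqueexistencehyperbolicpdethm} is formulated globally on $\reals^{n+1}$ with admissibility bounds for all $x\in\reals^n$, so the coefficients read off on a coordinate patch must first be cut off and extended to all of $\reals^{n+1}$ in a way that keeps the metric in $\mathcal{C}_{n,a}$ uniformly on compact time intervals, with the resulting arbitrariness removed afterwards by the same finite-propagation-speed argument you invoke for patching; this is routine but is precisely the technical content that the citation to \cite{cauchy} is carrying.
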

Assuming $H = \extd B$, the $B$-field has to satisfy the equation (cf.\ (\ref{einsteinframecombinedeinsteineqs}) with $\Bar{H} = 0$)
\begin{equation*}
    \extd^*\extd B = -\frac{4}{d-2}i_{\xi^\sharp} \extd B
\end{equation*}
for some closed one-form $\xi \in \Omega^1_{\mathrm{cl}}(M)$. This almost looks like a tensor wave equation as above, but differs from it in two ways:
\begin{enumerate}[label = (\roman*)]
    \item The Beltrami wave operator $\Box_g$ is replaced by the co-exact wave operator $\extd^*\extd B$.
    \item The equation only depends on $\extd B$.
\end{enumerate}
Due to the second point in particular, we expect the PDE to exhibit fundamentally different uniqueness behaviour than the tensor wave equation (\ref{tensorwaveeq}): For any solution $B$ to such an equation and any closed one-form $b \in \Omega^1_{\mathrm{cl}}(M)$, also $B+ b$ is a solution. In physics, this is referred to as the $B$-field having a \enquote{gauge freedom}.

We will call equations that satisfy (i) and (ii) \textit{co-exact wave equations}. These are equations for a $p$-form $A \in \Omega^p(M)$ that can be stated as 
\begin{equation}\label{formwaveeq0}
    \extd^*\extd A+ B(\extd A) = E,
\end{equation}
wherein $B \in \Gamma(\Hom(\Lambda^{p+1}T^*M,\Lambda^pT^*M))$, and $E \in \Omega^p(M)$ is co-closed, i.e.\ $\extd^*E = 0$. 

This type of PDE is investigated in more detail in \cite[Section A.3]{oskar_phdthesis}. In particular, motivated by the Lorenz gauge, the interest there lies on finding a set of equations which always admit a co-closed solution. It is found in \cite[Section A.3]{oskar_phdthesis} that it is sufficient to impose the following condition:
\begin{enumerate}[label = (iii)]
    \item Applying the co-differential to the equation gives an equation directly proportional to the original equation.
\end{enumerate}
Note that this condition depends on the choice of Lorentz metric used to define the co-differential. In \cite[Lemma A.10]{oskar_phdthesis}, it is found that any co-exact wave equation that satisfies condition (iii) can be stated as 
\begin{equation}
    \extd^*\extd A+ \scalbrack{B,\extd A} + \scalbrack{\beta, \extd A} = E,
\end{equation}
where $\beta \in \Omega^1_{\mathrm{cl}}(M)$ is closed, $B\in\Omega^{2p+1}(M)$ and $E\in \Omega^p(M)$ such that 
\begin{equation*}
    \extd^* E = -\scalbrack{\beta,E}, \qquad\quad \extd^*B = -\scalbrack{\beta,B}.
\end{equation*}
Note that the string frame and Einstein frame $B$-field equation with arbitrary background field $\Bar{H}$ is of this form, cf.\ (\ref{combinedgeneinsteineqs}) and (\ref{einsteinframecombinedeinsteineqs}). (Note that in the string frame $\beta = \xi$ while in the Einstein frame $\beta = \frac{4}{d-2} \xi$.) This may shed some light on the $B$-field equation and its coupling to the metric and dilaton.

The main result for co-exact wave equations presented in \cite{oskar_phdthesis} is Theorem A.11. We shall repeat this theorem here. We use it to show that the generalised Lorenz gauge propagates well, cf.\ Proposition \ref{bfieldgaugefield_sourcefreewaveeq_prop}.
\begin{theorem}\label{coexact_wave_eqs_main_thm}
    Let $(M,g)$ be a globally hyperbolic sapcetime with spacelike Cauchy hypersurface $i\colon S\hookrightarrow M$. Let $\beta \in \Omega^1_{\mathrm{cl}}(M)$ closed, $B\in\Omega^{2p+1}(M)$ and $E\in \Omega^p(M)$ such that 
    \begin{equation*}
        \extd^* E = -\scalbrack{\beta,E}, \qquad\quad \extd^*B = -\scalbrack{\beta,B}.
    \end{equation*}
    Let $A_0 \in \Omega^p(M)$ be smooth $p$-form; the initial data. Consider the following system of PDEs:
    \begin{equation}\label{formwaveeq}
        \begin{split}
            \extd^*\extd A + \scalbrack{B, \extd A} + \scalbrack{\beta, \extd A}
            &= E, \\
            i^*\extd A &= i^*\extd A_0.\\
        \end{split}
    \end{equation}
    Herein, $N$ is the future directed normal to $S$. 
    
    If and only if $A_0$ satisfies on $S$ the constraint equation
    \begin{equation*}
        i_N\left[\extd^*\extd A_0 +  \scalbrack{B,\extd A_0} + \scalbrack{\beta, \extd A_0}- E\right] = 0 \\
    \end{equation*}
    does there exist a smooth $p$-form $A\in \Omega^p(M)$ solving the initial value problem to equation (\ref{formwaveeq}). In this case, solutions are unique up to addition of a closed $p$-form, and there exist co-closed solutions. The latter are uniquely determined by their initial value $i^*A$ on $S$.
\end{theorem}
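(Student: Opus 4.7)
The plan is to reduce Theorem \ref{coexact_wave_eqs_main_thm} to Theorem \ref{tensorwaveeqthm} by solving a carefully chosen auxiliary tensor wave equation and then propagating co-closedness from the initial hypersurface. For \emph{necessity} of the constraint, observe that if $A$ solves the IVP then the PDE holds pointwise on $S$, so its contraction with the unit normal vanishes there; a hypersurface decomposition (in the spirit of Lemma \ref{extdhypersurflemma}) shows that for any $(p+1)$-form $\omega$, the quantity $i_N \extd^* \omega$ on $S$ is determined by $i^* \omega$ and its tangential derivatives. Applied to $\omega = \extd A$ and combined with $i^*\extd A = i^*\extd A_0$, this produces exactly the stated constraint.

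For \emph{existence}, I would introduce the auxiliary wave equation
\begin{equation*}
    \extd^*\extd A + \extd \extd^* A + \scalbrack{B,\extd A} + \scalbrack{\beta,\extd A} = E,
\end{equation*}
which via the Weitzenböck identity (relating $\extd^*\extd + \extd \extd^*$ to $-\Box_g$ modulo zero-order curvature) takes the form covered by Theorem \ref{tensorwaveeqthm}. Prescribe initial data $i^* A = i^* A_0$ and choose $i^* \nabla_N A$ so that $\extd^* A = 0$ and $\nabla_N \extd^* A = 0$ hold on $S$; the constraint equation on $A_0$ is exactly what makes this prescription self-consistent. Applying $\extd^*$ to the auxiliary equation and invoking the hypotheses $\extd^* B = -\scalbrack{\beta,B}$, $\extd^* E = -\scalbrack{\beta,E}$, and $\extd \beta = 0$, the bracket terms rearrange, via commutator identities of the type $\extd^*\scalbrack{\beta, \cdot} = -\scalbrack{\beta,\extd^*(\cdot)}$, into a tensor wave equation homogeneous in $\extd^* A$. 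The uniqueness clause of Theorem \ref{tensorwaveeqthm} then forces $\extd^* A \equiv 0$ on all of $M$, so the auxiliary equation collapses to the original one, and the resulting $A$ is automatically a co-closed solution; that $i^*\extd A = i^*\extd A_0$ follows from $(\extd A)^\parallel = \extd^S (i^* A)$.

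For \emph{uniqueness up to closed forms}, let $A_1, A_2$ both solve the IVP and set $\omega = \extd(A_1 - A_2)$. Then $\omega$ is closed, has $i^*\omega = 0$, and satisfies $\extd^* \omega + \scalbrack{B, \omega} + \scalbrack{\beta, \omega} = 0$. Applying $\extd$ and using $\extd \omega = 0$ yields a tensor wave equation in the scope of Theorem \ref{tensorwaveeqthm}; the combined data $i^*\omega = 0$ together with the first-order equation on $S$ force $i^*\nabla_N \omega = 0$ as well, so uniqueness gives $\omega \equiv 0$, i.e.\ $A_1 - A_2$ is closed. For \emph{uniqueness of co-closed solutions} with the same $i^*A$, the difference $\delta A = A_1 - A_2$ is closed, co-closed and pulls back to zero; closedness together with the vanishing tangential part pins down $i^*\nabla_N \delta A = 0$ via the hypersurface decomposition, and $\extd^*\extd \delta A + \extd \extd^* \delta A = 0$ is then, after Weitzenböck, a homogeneous tensor wave equation whose unique solution is $\delta A \equiv 0$.

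\textbf{The main obstacle} is the algebraic verification in the second paragraph that $\extd^*$ applied to the LHS of the original equation produces a closed wave system in $\extd^* A$ alone, with no inhomogeneous source. This is condition (iii) from the discussion in the excerpt preceding the theorem: the hypotheses $\extd^* B = -\scalbrack{\beta, B}$, $\extd^* E = -\scalbrack{\beta, E}$, and $\extd \beta = 0$ are tailored exactly to permit this rearrangement, and the delicate part is the commutation of $\extd^*$ with the tensor brackets $\scalbrack{\,\cdot\,, \extd A}$, which is the computational heart of the argument.
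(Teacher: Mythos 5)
The paper does not actually prove this theorem in-house -- it is quoted from \cite{oskar_phdthesis} -- but the surrounding discussion (condition (iii) and the computation (\ref{cwaveeq}) in Proposition \ref{bfieldgaugefield_sourcefreewaveeq_prop}) shows that your overall strategy is the intended one: augment the equation by $\extd\extd^*A$, reduce to Theorem \ref{tensorwaveeqthm} via Weitzenb\"ock, and propagate $\extd^*A=0$ by applying $\extd^*$ to the equation, using $\extd\beta=0$, $\extd^*B=-\scalbrack{\beta,B}$, $\extd^*E=-\scalbrack{\beta,E}$. The identity $\extd^*\scalbrack{\beta,\omega}=-\scalbrack{\beta,\extd^*\omega}$ you invoke is exactly the one used in (\ref{cwaveeq}) (valid because $\nabla\beta$ is symmetric for closed $\beta$). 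However, what you label the ``main obstacle'' and defer is the substance of the proof: in particular for the $B$-term the second-derivative contributions $B(\nabla\extd A)$ vanish only because $B$ is totally antisymmetric and $\extd\extd A=0$, and the $\nabla B$ contributions close up only via $\extd^*B=-\scalbrack{\beta,B}$. A write-up that asserts this without computing it has not proved the theorem.

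Two further concrete gaps. First, your necessity argument claims $i_N\extd^*\omega$ on $S$ is determined by $i^*\omega$; by Lemma \ref{extdhypersurflemma} it equals $-(\extd^\Sigma)^*$ applied to the \emph{normal} part $i^*(i_N\omega)$, which the displayed condition $i^*\extd A=i^*\extd A_0$ does not fix. (Flat Maxwell with $A_0=f\,\extd t$, $f$ a non-harmonic function of the spatial variables, gives $i^*\extd A_0=0$ and the solution $A=0$, while the constraint evaluates to $\pm\Delta_S f\neq 0$; so the ``only if'' direction, and likewise uniqueness up to closed forms, genuinely require $i^*(i_N\extd A)=i^*(i_N\extd A_0)$ as part of (\ref{formwaveeq}) -- the dangling sentence about $N$ suggests that line was dropped from the statement, and a careful proof attempt should have surfaced this.) Second, ``$\nabla_N\extd^*A=0$ on $S$'' is not a choice of Cauchy data, since it involves second transversal derivatives of $A$: its tangential part must be \emph{derived} from the normal component of the auxiliary equation restricted to $S$ (which requires $\restr{\extd^*A}{S}=0$, the constraint, and matching $(\extd A)^\perp=(\extd A_0)^\perp$ via the tangential part of $\nabla_N A$ -- a choice you never make), and its normal part from $\extd^*\extd^*A=0$. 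The same issue affects your last claim: $i^*\delta A=0$ alone does not kill a closed, co-closed $\delta A$ (take $\delta A=\extd f$ with $\restr{f}{S}=0$, $N(f)=1$, $\Box f=0$); one needs the full restriction of $A$ to $S$, not merely its pullback.
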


\section{Decomposition of the (Co-)Differential of Forms over Hypersurfaces}

We collect here a few trivial results on the decomposition of forms, their differential, and their co-differential into a tangent and normal part over hypersurfaces. We work in abstract index notation to have a convenient and unambiguous way of denoting contractions. We remind ourselves that Greek indices refer to arbitrary components, while Latin indices refer to components of a parallel projection of the tensor onto a spacelike hypersurface, usually denoted by $\Sigma$.

\begin{lemma}\label{antisymmhypersurfacederlemma}
    Let $A_{\mu_1 ... \mu_k}$ be a $k$-form on a Lorentzian manifold $(M, g)$ with spacelike hypersurface $(\Sigma, g_\Sigma)$. Denote by $N = \partial_0$ a (local) unit normal on $\Sigma$, and by $\nabla$ and $D$ the LC connections on $M$ and $\Sigma$ respectively. Then, on $\Sigma$
    \begin{equation*}
        \nabla_m A_{0 m_2...m_k} = D_m A^\perp_{m_2...m_k}  - k^l_m A_{lm_2...m_k},
    \end{equation*}
    where $A^\perp = A(N) \in \Omega^{k-1}(\Sigma)$.
\end{lemma}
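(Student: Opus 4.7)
The identity is a tensorial statement on $\Sigma$: both sides, when viewed as functions of the indices $m, m_2, \dots, m_k$ (all taking values in $\{1,\dots,n\}$), define tensors on $\Sigma$. Accordingly, the plan is to verify it in a convenient choice of coordinates, namely Gaussian normal coordinates adapted to $\Sigma$, and then invoke tensoriality. Concretely, I would pick coordinates $(x^0,x^1,\dots,x^n)$ on a tubular neighbourhood of $\Sigma$ in which $\Sigma = \{x^0 = 0\}$, the curves $x^0 \mapsto (x^0, x^m)$ are unit-speed normal geodesics, and consequently $g_{00} \equiv -1$ and $g_{0m} \equiv 0$ on a whole neighbourhood of $\Sigma$.

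In such coordinates the Christoffel symbols simplify dramatically: $\Gamma^0_{00} = \Gamma^0_{0m} = \Gamma^m_{00} = 0$, while on $\Sigma$ one reads off $\Gamma^l_{mn}\vert_\Sigma = (\Gamma^\Sigma)^l_{mn}$ from the formula for the Levi-Civita connection applied to $g_\Sigma$, and $\Gamma^l_{m0}\vert_\Sigma = \tfrac12 g^{ln} \partial_0 g_{mn}\vert_\Sigma = k^l_m$ by the standard computation relating the second fundamental form to the Lie derivative $k_{mn} = \tfrac12(L_N g)_{mn}$. (With our convention $A^\perp = A(N) = A(\partial_0)$ this is precisely the sign convention in the statement.)

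I would then expand
\[
\nabla_m A_{0 m_2\dots m_k} \;=\; \partial_m A_{0 m_2\dots m_k} \;-\; \Gamma^\lambda_{m0} A_{\lambda m_2 \dots m_k} \;-\; \sum_{i=2}^k \Gamma^\lambda_{m m_i} A_{0 m_2 \dots \lambda \dots m_k},
\]
and split each $\Gamma^\lambda$ sum into its $\lambda = 0$ and $\lambda = l$ contributions. The key simplification is that every term in the second sum with $\lambda = 0$ inserts a second zero index into the antisymmetric tensor $A$ and thus vanishes, while $\Gamma^0_{m0} = 0$ kills the $\lambda = 0$ part of the first sum. What remains is exactly $\partial_m A_{0 m_2 \dots m_k} - k^l_m A_{l m_2 \dots m_k} - \sum_i (\Gamma^\Sigma)^l_{m m_i} A_{0 m_2 \dots l \dots m_k}$, and recognising the last two non-Christoffel terms plus the partial derivative as $D_m A^\perp_{m_2 \dots m_k}$ (using $A^\perp_{m_2 \dots m_k} = A_{0 m_2 \dots m_k}$ in these coordinates) gives the claim.

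There is no real obstacle here; the only thing one has to be a bit careful about is the sign convention for $k$ and the matching convention for $A^\perp$, but these are fixed by the Notation section. The antisymmetry of $A$ doing the work of eliminating the normal-normal Christoffel contributions is the one genuinely non-cosmetic step, and it is the reason the formula takes the specific form stated rather than carrying additional trace-of-$k$ corrections.
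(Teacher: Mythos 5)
Your proof is correct, but it takes a genuinely different route from the paper's. The paper disposes of the lemma in a single invariant line: apply the Leibniz rule to the contraction $A(N)$ to get
\begin{equation*}
    [(\nabla A)(N)]^\parallel = [\nabla(A(N)) - A(\nabla N)]^\parallel = D(A^\perp) - A(k),
\end{equation*}
identifying the first term with the intrinsic derivative of $A^\perp$ via the Gauss formula (the would-be correction terms there involve $A(N,N,\dots)$ and die by antisymmetry) and the second with the Weingarten map acting in the first slot. You instead expand $\nabla_m A_{0m_2\dots m_k}$ in Christoffel symbols in Gaussian normal coordinates and track which symbols vanish; your computations of $\Gamma^0_{m0}=\Gamma^m_{00}=\Gamma^0_{00}=0$, $\Gamma^l_{m0}\vert_\Sigma = k^l_m$, and $\Gamma^l_{mn}\vert_\Sigma = (\Gamma^\Sigma)^l_{mn}$ are all right, and your sign for $k$ matches the paper's convention $2k = L_Ng$. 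The antisymmetry of $A$ plays the same essential role in both arguments -- in the paper's version it kills the normal--normal Gauss-formula corrections, in yours the $\lambda=0$ contributions to the index sums -- so you have correctly isolated the one non-cosmetic ingredient. What the invariant argument buys is brevity and manifest coordinate-independence; what yours buys is an explicit check of the sign and normalisation of $k^l_m$ against the coordinate formula used elsewhere in the paper (e.g.\ in the proof of Lemma \ref{initialdata_setup_metric_phi_lemma}). One small remark: the reduction to Gaussian normal coordinates is legitimate, but the word ``tensorial'' is carrying a little weight, since the left-hand side contracts with $\partial_0$ rather than with an extension of $N$; it is well defined independently of the coordinate extension only because $\nabla_m$ differentiates in tangential directions and $\partial_0\vert_\Sigma = N$, which is worth saying explicitly.
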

\begin{proof}
    \begin{equation*}
        [(\nabla A)(N)]^\parallel = [\nabla(A(N)) - A(\nabla N)]^\parallel = D(A^\perp) - A(k).
    \end{equation*}
\end{proof}

\begin{lemma} \label{extdhypersurflemma}
    Let $A$ be a $p$-form on a Lorentzian manifold $(M, g)$ with spacelike hypersurface $(\Sigma, g_\Sigma)$. Let $N$ be a (local) unit normal on $\Sigma$. Denote by $i\colon \Sigma \hookrightarrow M$ the inclusion map, and by $\extd$ and $\extd^\Sigma$ the exterior derivative on $M$ and $\Sigma$ respectively. Furthermore, denote
    \begin{equation*}
        A = A_0 = A_0^\parallel - N^\flat \wedge A_0^\perp, \qquad\quad \nabla_N A = A_1 = A_1^\parallel - N^\flat \wedge A_1^\perp
    \end{equation*}
    Then, on $\Sigma$
    \begin{equation*}
        \begin{split}
            \extd A &= \extd^\Sigma A^\parallel_0 - N^\flat \wedge \left[A_1^\parallel - \extd^\Sigma A_0^\perp + k \cdot A_0^\parallel\right], \\
            \extd^* A &= (\extd^\Sigma)^*A^\parallel_0 + A_1^\perp + A_0^\perp \tr k+ k\cdot A_0^\perp - N^\flat \wedge [-(\extd^\Sigma)^* A_0^\perp].
        \end{split}
    \end{equation*}
\end{lemma}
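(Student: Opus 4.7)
I will establish both identities through a direct coordinate computation at a point $p \in \Sigma$, choosing coordinates $(x^0=t, x^i)$ adapted to $\Sigma$ so that $\Sigma = \{t=0\}$ locally, $N = \partial_0$, and $g_{00}|_\Sigma = -1$, $g_{0i}|_\Sigma = 0$. In such coordinates $N^\flat = -\extd t$, and the decomposition $A = A^\parallel - N^\flat \wedge A^\perp$ translates to the component identities $A_{i_1\ldots i_p}|_\Sigma = A^\parallel_{i_1\ldots i_p}$ and $A_{0 i_1\ldots i_{p-1}}|_\Sigma = A^\perp_{i_1\ldots i_{p-1}}$ (and analogously for $A_1 = \nabla_N A$). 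Once this setup is fixed, everything reduces to computing the tangential and normal components of $\extd A$ and $\extd^* A$ separately using the standard formulas $(\extd A)_{\mu_0\ldots\mu_p} = \sum_\alpha (-1)^\alpha \nabla_{\mu_\alpha} A_{\mu_0\ldots \widehat{\mu_\alpha}\ldots\mu_p}$ and $(\extd^* A)_{\mu_1\ldots\mu_{p-1}} = -\nabla^\mu A_{\mu\mu_1\ldots\mu_{p-1}}$.

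For $\extd A$, the tangential part $(\extd A)^\parallel = i^*\extd A$ collapses to $\extd^\Sigma A_0^\parallel$ immediately by naturality of pullback under $\extd$, since $N^\flat$ pulls back to zero. The normal part $(\extd A)_{0 i_1\ldots i_p}$ splits into a single $\alpha=0$ contribution $\nabla_0 A_{i_1\ldots i_p} = A_1^\parallel{}_{i_1\ldots i_p}$ plus the terms $\sum_{\alpha\geq 1}(-1)^\alpha \nabla_{i_\alpha} A_{0 i_1\ldots\widehat{i_\alpha}\ldots i_p}$, to which I apply the preceding Lemma \ref{antisymmhypersurfacederlemma} directly. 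Its two contributions then assemble into $-\extd^\Sigma A_0^\perp$ (by the definition of $\extd^\Sigma$ as an alternating sum of covariant derivatives) and into a term proportional to $k\cdot A_0^\parallel$ (after identifying the antisymmetrization structure with the paper's definition $(k\cdot A)_{i_1\ldots i_p} = (-1)^p p\, k_{[i_1}^j A_{i_2\ldots i_p]j}$, which can be rewritten as $\pm\sum_m (-1)^{m-1} k^j_{i_m} A^\parallel_{j i_1\ldots \widehat{i_m}\ldots i_p}$ after an index permutation).

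For $\extd^* A$, I split the contraction $-g^{\mu\nu}\nabla_\nu A_{\mu\cdots}$ into a temporal piece ($\mu=\nu=0$) and a spatial piece. The tangential part thus picks up $+\nabla_0 A_{0 i_1\ldots i_{p-1}} = A_1^\perp$ from the temporal piece, while the spatial piece $-g^{jk}_\Sigma \nabla_k A_{j i_1\ldots i_{p-1}}$ — here both indices on $A$ are tangential — must be handled via the Gauss decomposition $\nabla_X Y = \nabla^\Sigma_X Y + k(X,Y) N$. Applied to a $p$-form, this yields $(\nabla_X A)(V_1,\ldots,V_p) = (\nabla^\Sigma_X A^\parallel)(V_1,\ldots,V_p) - \sum_\ell (-1)^{\ell-1} k(X,V_\ell)\, A^\perp(V_1,\ldots,\widehat{V_\ell},\ldots,V_p)$, and after tracing with $g^{jk}_\Sigma$ the three resulting terms are precisely $(\extd^\Sigma)^* A_0^\parallel$, $(\tr k)\, A_0^\perp$ and $k\cdot A_0^\perp$. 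For the normal part of $\extd^* A$, only the spatial contraction survives (since $A_{00\cdots}=0$), giving $g^{jk}_\Sigma \nabla_k A_{0 j i_1\ldots i_{p-2}}$; here Lemma \ref{antisymmhypersurfacederlemma} again reduces this to $-(\extd^\Sigma)^* A_0^\perp$ plus a term $g^{jk}_\Sigma k^l_k A^\parallel_{l j i_1\ldots i_{p-2}}$ that vanishes identically by symmetry of $k^{lj}$ against antisymmetry of $A^\parallel_{lj\ldots}$.

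The main obstacle is purely bookkeeping: keeping the sign conventions consistent across the paper's definitions of $k \cdot A$, of $N^\flat = -\extd t$, and of the antisymmetrization in $\extd$ and $\extd^*$. In particular, identifying the $k$-term from Lemma \ref{antisymmhypersurfacederlemma} (and from the Gauss correction) with $(k\cdot A)$ as defined in the notation section requires expanding the antisymmetrization $(-1)^p p\, k_{[i_1}^j A_{i_2\ldots i_p]j}$ using antisymmetry of $A$ in its spatial indices, which reduces the antisymmetrization over $p$ indices (one of which already appears symmetrically with $A$) to a single sum over which argument carries $k^\sharp$. No deeper ingredient is needed beyond the preceding Lemma \ref{antisymmhypersurfacederlemma}, naturality of the exterior derivative, and the Gauss formula for $\nabla$ versus $\nabla^\Sigma$ on tangent vector fields.
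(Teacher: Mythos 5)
Your proposal is correct and follows essentially the same route as the paper's proof: a direct computation in coordinates adapted to $\Sigma$, treating the tangential and normal components of $\extd A$ and $\extd^* A$ separately, invoking naturality of the pullback for $(\extd A)^\parallel$, Lemma \ref{antisymmhypersurfacederlemma} for the mixed terms carrying a $0$-index, the Gauss relation between $\nabla$ and $D$ for the purely tangential derivatives, and the symmetry of $k$ against the antisymmetry of $A$ to kill the extra term in the normal part of $\extd^* A$. The remaining work is exactly the sign and antisymmetrisation bookkeeping you identify, which the paper carries out explicitly in its four displayed computations.
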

\begin{proof}
    Denote by $\nabla$ and $D$ the LC connections on $M$ and $\Sigma$, respectively. One,
    \begin{equation*}
        \begin{split}
            \extd A_{m_1...m_{p+1}} & = (p+1) \nabla_{[m_1} A_{m_2...m_{p+1}]} = (p+1) \partial_{[m_1} A_{m_2...m_{p+1}]}  = D_{[m_1} A_{m_2...m_{p+1}]} \\
            &= \extd^\Sigma A_{m_1...m_{p+1}}.
        \end{split}
    \end{equation*}
    Two,
    \begin{equation*}
        \begin{split}
            \extd A_{0m_1...m_p} &= (p+1) \nabla_{[0} A_{m_1...m_p]} \\
            &= \nabla_0 A_{m_1...m_p} - p \nabla_{[m_1} A_{|0|m_2...m_p]} \\
            &= \nabla_N A_{m_1...m_p} - \extd^\Sigma (i_N A)^\parallel_{m_1...m_p} -(-1)^{p}p  k^l_{[m_1} A_{m_2...m_p]l} \\
            &= (A_1^\parallel)_{m_1...m_p} - \extd^\Sigma (A_0)^\perp_{m_1...m_p} - k\cdot (A_0)_{m_1...m_p}.
        \end{split}
    \end{equation*}
    Three,
    \begin{equation*}
        \begin{split}
            \extd^* A_{m_2...m_p} &= -\nabla^\lambda A_{\lambda m_2...m_p} \\
            &= \nabla_0 A_{0m_2...m_p} - \nabla^l A_{lm_2...m_p} \\
            &= (A_1^\perp)_{m_2...m_p} + (\extd^\Sigma)^* (A_0^\parallel)_{m_2...m_p} + k^l_l A_{0m_2...m_p}\\
            &\quad + k^l_{m_2} A_{l0m_3...m_p} + ... + k^l_{m_p} A_{lm_2...m_p0} \\
            &= (A_1^\perp)_{m_2...m_p} + (\extd^\Sigma)^* (A_0^\parallel)_{m_2...m_p} - \tr k\:(A_0^\perp)_{m_2...m_p}\\
            &\quad + (-1)^{p-1} k^l_{[m_2} A^\perp_{m_3...m_p]l} \\
            &= (A_1^\perp)_{m_2...m_p} + (\extd^\Sigma)^* (A_0^\parallel)_{m_2...m_p} + \tr k\:(A_0^\perp)_{m_2...m_p}\\
            &\quad + (k \cdot A^\perp_0)_{m_2...m_p}. \\
        \end{split}
    \end{equation*}
    Four,
    \begin{equation*}
        \begin{split}
            \extd^* A_{0m_3...m_p} &= -\nabla^\lambda A_{\lambda0 m_3...m_p} \\
            &=  \nabla^l A_{0lm_3...m_p}  \\
            & = D^l A_{0lm_3...m_p} + k^{ln} A_{lnm_3...m_p} \\
            & = -(\extd^\Sigma)^* (A_0^\perp)_{m_3...m_p}.
        \end{split}
    \end{equation*}
\end{proof}

\bibliographystyle{unsrturl}  
\bibliography{literatur}
\bigskip
O.\ Schiller: oskar.schiller@uni-hamburg.de \medskip \\
Department of Mathematics, University of Hamburg,  Bundesstr.\ 55, 20146  Hamburg, Germany.
\cleardoublepage
\end{document}